\newcommand{\nm}{\noalign{\smallskip}}
\newcommand{\ds}{\displaystyle}
\newcommand{\neutralize}[1]{\expandafter\let\csname c@#1\endcsname\count@}
\newtheorem{prop}{Proposition}[section]
\newtheorem{thm}{Theorem}[section]
\newtheorem{lem}{Lemma}[section]
\newtheorem{assump}{Assumption}[section]
\theoremstyle{definition}
\newtheorem{rmk}{Remark}[section]
\numberwithin{equation}{section}
\theoremstyle{definition}
\newcommand{\D}{\mathcal{D}}
\newcommand{\Z}{\mathbb{Z}}
\newcommand{\R}{\mathbb{R}}
\newcommand{\C}{\mathcal{C}}
\newcommand{\A}{\mathcal{A}}
\newcommand{\F}{\mathcal{F}}
\newcommand{\p}{\partial}
\renewcommand{\L}{\mathcal{L}}
\renewcommand{\S}{\mathcal{S}}
\newcommand{\M}{\mathcal{M}}
\newcommand{\K}{\mathcal{K}}
\renewcommand{\epsilon}{\varepsilon}
\newcommand{\dx}{\: \mathrm{d}}
\newcommand{\eqnref}[1]{(\ref {#1})}
\def\nm{\noalign{\medskip}}
\def\capacity{{\mathrm{Cap}}}
\newcommand{\ie}{\textit{i.e.}}
\title{Subwavelength guided modes for acoustic waves in bubbly crystals with a line defect}
\author{
	Habib Ammari\thanks{\footnotesize Department of Mathematics, 
		ETH Z\"urich, 
		R\"amistrasse 101, CH-8092 Z\"urich, Switzerland (habib.ammari@math.ethz.ch, erik.orvehed.hiltunen@sam.math.ethz.ch, sanghyeon.yu@sam.math.ethz.ch).}\and Erik Orvehed Hiltunen\footnotemark[1]  \and Sanghyeon Yu\footnotemark[1]}
\date{}
\begin{document}
	\maketitle

\begin{abstract}
The recent development of subwavelength photonic and phononic crystals shows the possibility of controlling wave propagation at deep subwavelength scales. Subwavelength bandgap phononic crystals are typically created using a periodic arrangement of subwavelength resonators, in our case small gas bubbles in a liquid. In this work, a waveguide is created by modifying the sizes of the bubbles along a line in a dilute two-dimensional bubbly crystal, thereby creating a line defect. Our aim  is to prove that the line defect indeed acts as a waveguide; waves of certain frequencies will be localized to, and guided along, the line defect. The key result is an original formula for the frequencies of the defect modes. Moreover, these frequencies are numerically computed using the multipole method, which numerically illustrates our main results.
\end{abstract}

\def\keywords2{\vspace{.5em}{\textbf{  Mathematics Subject Classification
(MSC2000).}~\,\relax}}
\def\endkeywords2{\par}
\keywords2{35R30, 35C20.}

\def\keywords{\vspace{.5em}{\textbf{ Keywords.}~\,\relax}}
\def\endkeywords{\par}
\keywords{bubble, subwavelength resonance, subwavelength phononic crystal,  subwavelength wave\-guide, line defect, weak localization.}

\section{Introduction}	
Line defects in bandgap photonic or phononic bandgap crystals are of interest due to their possible applications in low-loss waveguides. The main mathematical problem of interest is to show that the spectrum of the defect operator has a non-zero overlap with the original bandgap. Moreover,  it is also of interest to understand the nature and location of the defect spectrum. For previous works regarding line defects in bandgap crystals we refer to \cite{santosa,kuchment_line, kuchment_EM, Brown, Brown1, cardone1, Delourme_dilute, Fliss_DtN}.

In this work, we consider a line defect in a phononic bandgap crystal comprised of gas bubbles in a liquid. The gas bubbles are known to resonate at a low frequency, called the \textit{Minnaert frequency}. The corresponding wavelength is larger than the bubble by several orders of magnitude \cite{first, minnaert}. Based on this, it is possible to create \textit{subwavelength} bandgap crystals, which operate at wavelengths much larger than the unit cell size of the microstructured material. One of the main motivations for studying subwavelength bandgap materials is to manipulate wave propagation at subwavelength scales. A second motivation  is for their use in devices where conventional bandgap materials, based on Bragg scattering, would create infeasibly large devices \cite{pnas, nature}. Mathematical properties of bubbly phononic bandgap materials have been studied in, for example, \cite{first, defectSIAM, doublenegative, bandgap, nearzero, effectivemedium}, and subwavelength phononic bandgap materials have been experimentally realised in \cite{experiment2013, Lemoult_sodacan, phononic1}.

Wave localization due to a point defect in a bubbly bandgap material was first proven in \cite{defectSIAM}. In \cite{defectX}, where some additions and minor corrections to \cite{defectSIAM} were made,  it is shown  that the mechanism for creating localized modes using small perturbations is quite different depending on the volume fraction of the bubbles. In order to create localized modes in the dilute regime, the defect should be smaller than the surrounding bubbles, while in the non-dilute regime, the defect has to be larger. Based on this, in the case of a line defect, it is natural to expect different behaviour in these two different regimes. This suggests that different methods of analysis are needed in the two regimes. In this paper, we will mainly focus on the dilute regime, taking the radius of the bubbles sufficiently small.

If the defect size is small, \ie{} if the size of the
perturbed bubble is close to its original size, then the band structure of the defect problem will be a small perturbation of the band structure of the original problem \cite{MaCMiPaP, AKL}. This way, it is possible to shift the defect band upwards, and a part of the defect band will fall into the subwavelength bandgap. However, because of the curvature of the original band, it is impossible to create a defect band entirely inside the bandgap with this approach.

In order to create defect bands which are entirely located inside the subwavelength bandgap, we have to consider slightly larger perturbations. In this paper, we will show that for arbitrarily small defects, a part of the defect band will lie inside the bandgap. Moreover, we will show that for suitably large perturbation sizes, the entire defect band will fall into the bandgap, and we will explicitly quantify the size of the perturbation needed in order to achieve this. Because of this, our results are more general than previous weak localization results since we explicitly show how the defect band depends on the perturbation size. 

In order to have \textit{guided} waves along the line defect, the defect mode must not only be localized to the line, but also propagating along the line. In other words, we must exclude the case of standing waves in the line defect, \ie{} modes which are localized in the direction of the line. As discussed in \cite{kuchment_review,kuchment_line}, such modes are associated with the point spectrum of the perturbed operator which appears as a flat band in the dispersion relation. Proving the absence of bound modes in phononic or photonic waveguides is a challenging problem;  for example in \cite{hardwall1} this was proven by imposing ``hard-wall'' Dirichlet or Neumann boundary conditions along the waveguide, while in \cite{surface4} the absence of bound modes was proven in the case of a simpler Helmholtz-type operator. In this paper, we use the explicit formula for the defect band to show that it is nowhere flat, and hence does not correspond to bound modes in the direction of the line.  

The paper is structured as follows. In Section \ref{sec-1} we discuss preliminary results on layer potentials, and outline the main results from \cite{bandgap}. In Section \ref{sec-2} we restrict to circular domains and follow the approach of \cite{defectSIAM,defectX} to model the line defect using the fictitious source superposition method, originally introduced in \cite{Wilcox}. In Section \ref{sec-3} we prove the existence of a defect resonance frequency, and derive an asymptotic formula in terms of the density contrast in the dilute regime. Using this formula, we show that the defect modes are localized to, and guided along, the line defect. In Section \ref{sec:num} we compute the defect band numerically, in order to verify the formula and also illustrate the behaviour in the non-dilute regime. The paper ends with some concluding remarks in Section \ref{sec-5}.  In Appendix \ref{app:nondil}, we restrict ourselves to small perturbations to derive an asymptotic formula valid in the non-dilute regime. In Appendix \ref{app:noncirc} we outline the fictitious source superposition method in the case of non-circular domains.

\section{Preliminaries} \label{sec-1}
\subsection{Layer potentials} \label{sec:layerpot}
Let $Y^2 =[-1/2,1/2)^2\subset \R^2$ be the unit cell and assume that the bubble occupies a bounded and simply connected domain $D\in Y^2$ with $\p D \in C^{1,s}$ for some $0<s<1$. Let $\Gamma^0$ and $\Gamma^k,k>0$  be the Green's functions of the Laplace and Helmholtz equations in dimension two, respectively, \ie{}, 
\begin{equation*}
\begin{cases}
\ds \Gamma^k(x,y) = -\frac{i}{4}H_0^{(1)}(k|x-y|), \ & k>0, \\
\nm
\ds \Gamma^0(x,y) = \frac{1}{2\pi}\ln|x-y|, & k=0,
\end{cases}
\end{equation*}
where $H_0^{(1)}$ is the Hankel function of the first kind and order zero. Here, the outgoing Sommerfeld radiation condition is used for selecting the physical Helmholtz Green's function \cite{MaCMiPaP}.

Let $\S_{D}^k: L^2(\partial D) \rightarrow H_{\textrm{loc}}^1(\R^2)$ be the single layer potential defined by
\begin{equation*}
\S_D^k[\phi](x) = \int_{\partial D} \Gamma^k(x,y)\phi(y) \dx \sigma(y), \quad x \in \R^2.
\end{equation*}
Here, $H_{\textrm{loc}}^1(\R^2)$  denotes the space of functions that, on every compact subset of $\R^2$, are square integrable and have a weak first derivative that is also square integrable.

We also define the Neumann-Poincar\'e operator $\K_D^{k,*}: L^2(\partial D) \rightarrow L^2(\partial D)$ by
\begin{equation*}
\K_D^{k,*}[\phi](x) = \int_{\partial D} \frac{\partial }{\partial \nu_x}\Gamma^k(x,y) \phi(y) \dx \sigma(y), \quad x \in \partial D.
\end{equation*}
The following so-called jump relations of $\S_D^k$ on the boundary $\partial D$ are well-known (see, for example, \cite{MaCMiPaP}):
\begin{equation*}
\S_D^k[\phi]\big|_+ = \S_D^k[\phi]\big|_-,
\end{equation*}
and
\begin{equation*}
\frac{\partial }{\partial \nu}\S_D^k[\phi] \bigg|_{\pm} =  \left(\pm\frac{1}{2} I + \K_D^{k,*}\right) [\phi].
\end{equation*}
Here, $\partial/\partial \nu$ denotes the outward normal derivative,  and $|_\pm$ denote the limits from outside and inside $D$. In two dimensions, we have the following expansion of the Green's function for the Helmholtz equation \cite{MaCMiPaP}
\begin{equation*}\label{eq:hankel}
-\frac{i}{4}H_0(k|x-y|) = \frac{1}{2\pi} \ln |x-y| + \eta_k + \sum_{j=1}^\infty\left( b_j \ln(k|x-y|) + c_j \right) (k|x-y| )^{2j},
\end{equation*}
where $\ln$ is the principal branch of the logarithm and
$$ \eta_k = \frac{1}{2\pi}(\ln k+\gamma-\ln 2)-\frac{i}{4}, \quad b_j=\frac{(-1)^j}{2\pi}\frac{1}{2^{2j}(j!)^2}, \quad c_j=b_j\left( \gamma - \ln 2 - \frac{i\pi}{2} - \sum_{n=1}^j \frac{1}{n} \right),$$
with $\gamma$ being the Euler constant. Define, for $\phi \in L^2(\partial D)$, 
\begin{equation*}
\hat{S}_D^k[\phi](x) = \S_D[\phi](x) + \eta_k\int_{\partial D} \phi\dx \sigma.
\end{equation*}
Then the following expansion holds:
\begin{equation} \label{eq:Sexpansion}
\S_D^k =  \hat{\S}_{D}^k +O(k^2\ln k). 
\end{equation}

We also introduce a quasi-periodic version of the layer potentials. For $\alpha\in [0,2\pi)^2$, the quasi-periodic Green's function $\Gamma^{\alpha, k}$ is defined to satisfy
$$ (\Delta_x + k^2) \Gamma^{\alpha, k} (x,y) = \sum_{n\in \mathbb{R}^2} \delta(x-y-n) e^{i n\cdot \alpha}, \qquad x,y\in Y,$$
where $\delta$ is the Dirac delta function. The function $\Gamma^{\alpha, k} $ is $\alpha$-quasi-periodic in $x$, \ie{},  $e^{- i \alpha\cdot x} \Gamma^{\alpha, k}(x,y)$ is periodic in $x$ with respect to $Y$.  

We define the quasi-periodic single layer potential $\mathcal{S}_D^{\alpha,k}$ by
$$\mathcal{S}_D^{\alpha,k}[\phi](x) = \int_{\partial D} \Gamma^{\alpha,k} (x,y) \phi(y) \dx\sigma(y),\quad x\in \mathbb{R}^2.$$
It satisfies the following jump formulas:
\begin{equation*}
\S_D^{\alpha,k}[\phi]\big|_+ = \S_D^{\alpha,k}[\phi]\big|_-,
\end{equation*}
and
$$ \frac{\p}{\p\nu} \Big|_{\pm} \mathcal{S}_D^{\alpha,k}[\phi] = \left( \pm \frac{1}{2} I +( \mathcal{K}_D^{-\alpha,k} )^*\right)[\phi]\quad \mbox{on}~ \p D,$$
where $(\mathcal{K}_D^{-\alpha,k})^*$ is the operator given by
$$ (\mathcal{K}_D^{-\alpha, k} )^*[\phi](x)= \int_{\p D} \frac{\p}{\p\nu_x} \Gamma^{\alpha,k}(y,y) \phi(y) \dx\sigma(y).$$
We recall that $\mathcal{S}_D^{\alpha,0} : L^2(\p D) \rightarrow H^1(\p D)$ is invertible for $\alpha \ne 0$ \cite{MaCMiPaP}.

\subsection{Floquet transform}
A function $f(x_1)$ is said to be $\alpha$-quasi-periodic in the variable $x_1\in \R$
if $e^{-i\alpha x_1}f(x_1)$ is periodic. Given a function $f\in L^2(\R)$, the Floquet transform in one dimension is defined as
\begin{equation}\label{eq:floquet}
\F[f](x_1,\alpha) = \sum_{m\in \Z} f(x_1-m) e^{i\alpha m},
\end{equation}
which is $\alpha$-quasi-periodic in $x_1$ and periodic in $\alpha$. Let $Y = [-1/2,1/2)$ be the unit cell and $Y^* := \R / 2\pi \Z \simeq [0,2\pi)$ be the Brillouin zone. The Floquet transform is an invertible map $\F:L^2(\R) \rightarrow L^2(Y\times Y^*)$, with inverse  (see, for instance, \cite{kuchment, MaCMiPaP})
\begin{equation*}
\F^{-1}[g](x_1) = \frac{1}{2\pi}\int_{Y^*} g(x_1,\alpha) \dx \alpha.
\end{equation*}

\subsection{Bubbly crystals and subwavelength bandgaps}\label{subsec:bandgap}

Here we briefly review the subwavelength bandgap opening of a bubbly crystal from \cite{bandgap}. 

Assume that a single bubble occupies the region $D$ specified in Section \ref{sec:layerpot}. We denote by $\rho_b$ and $\kappa_b$ the density and the bulk modulus inside the bubble, respectively. We let $\rho_w$ and $\kappa_w$ be the corresponding parameters outside the bubble.  We introduce
\begin{equation*} 
v_w = \sqrt{\frac{\kappa_w}{\rho_w}}, \quad v_b = \sqrt{\frac{\kappa_b}{\rho_b}}, \quad k_w= \frac{\omega}{v_w} \quad \text{and} \quad k_b= \frac{\omega}{v_b}
\end{equation*}
as the speed of sound outside and inside the bubbles, and the wavenumber outside and inside the bubbles, respectively. Here, $\omega$ corresponds to the operating frequency of the acoustic waves. Let $\C = \cup_{n\in\Z^2}(D+n)$ be the periodic bubbly crystal. Define, for $x\in \R^2$,
$$\rho(x) = \rho_b\chi_{\C}(x) + \rho_w(1-\chi_{\C}(x)), \quad \kappa(x) = \kappa_b\chi_{\C}(x) + \kappa_w(1-\chi_{\C}(x)),$$
where $\chi_{\C}$ is the characteristic function of $\C$. 

We assume that there is a large contrast in the density, that is, the density contrast $\delta$ satisfies
\begin{equation} \label{data2}
\delta = \frac{\rho_b}{\rho_w} \ll 1.
\end{equation}
Recall that under (\ref{data2}), there exists a subwavelength resonance of the bubble in free space \cite{first}. 

In the following, we shall also make the assumption stated below.
\begin{assump} \label{assumption1}
	Without loss of generality, we
assume that $$v_w = v_b = 1.$$
\end{assump}
In this case we have $k_b = k_w = \omega$. Assumption \ref{assumption1} only serves to simplify the expressions. The methods presented in this paper indeed apply as long as the wave speeds outside and inside the bubbles are comparable to each other.

The wave propagation problem inside the periodic crystal can be modelled as
\begin{equation}\label{eq:original} \kappa(x) \nabla \cdot \left(\frac{1}{\rho(x)} \nabla v(x) \right) + \omega^2 v(x) = 0, \quad x \in \R^2.\end{equation}
We  denote by $\Lambda_0$ the set of propagating frequencies, \ie{}, the set of $\omega$ such that $\omega^2$ is in the spectrum of the operator
$$-\kappa \nabla \cdot \frac{1}{\rho} \nabla.$$

Denote by $Y_s = Y\times \R$ the unit strip and recall that $Y^2 = [-1/2,1/2)^2$ is the unit cell of the crystal. Applying the Floquet transformation, first in $x_1$-direction and then in $x_2$-direction, equation \eqnref{eq:original} can be decomposed first as
\begin{equation}\label{eq:Fonce} \begin{cases}
\ds \kappa(x) \nabla \cdot \left(\frac{1}{\rho(x)} \nabla v(x) \right) + \omega^2 v(x) = 0, \quad x \in Y_s, \\ 
\ds e^{-i \alpha_1 x_1} u  \,\,\,  \mbox{is periodic in } x_1,
 \end{cases}
\end{equation}
where $\alpha_1 \in Y^*$, and then as 
\begin{equation}\label{eq-scattering-quasiperiodic} \begin{cases}
\ds \kappa(x) \nabla \cdot \left(\frac{1}{\rho(x)} \nabla v(x) \right) + \omega^2 v(x) = 0, \quad x \in Y^2, \\ 
\ds e^{-i \alpha \cdot x} u  \,\,\,  \mbox{is periodic in } x,
\end{cases}
\end{equation}
where $\alpha = (\alpha_1, \alpha_2) \in Y^*\times Y^*$. We  denote by $\Lambda_{0,\alpha_1}$ the set of $\omega$ such that $\omega^2$ is in the spectrum of the operator implied by \eqnref{eq:Fonce} and by $\Lambda^{ess}_{0,\alpha_1}$ the essential part of this spectrum. 
It is known that \eqref{eq-scattering-quasiperiodic} has non-trivial solutions for discrete values of $\omega$:
$$  0 \le \omega_1^\alpha \le \omega_2^\alpha \le \cdots,$$
and we have the following band structure of propagating frequencies for the periodic bubbly crystal $\C$:
\begin{align*}
\ds \Lambda_{0,\alpha_1} &= \left[\min_{\alpha_2\in Y^*} \omega_1^{(\alpha_1,\alpha_2)},\max_{\alpha_2\in Y^*} \omega_1^{(\alpha_1,\alpha_2)}\right] \cup \left[\min_{\alpha_2\in Y^*} \omega_2^{(\alpha_1,\alpha_2)}, \max_{\alpha_2\in Y^*} \omega_2^{(\alpha_1,\alpha_2)} \right] \cup \cdots, \\
\ds \Lambda_0 &= \left[0,\max_{\alpha\in Y^*\times Y^*} \omega_1^\alpha\right] \cup \left[\min_{\alpha\in Y^*\times Y^*} \omega_2^\alpha, \max_{\alpha\in Y^*\times Y^*} \omega_2^\alpha \right] \cup \cdots.
\end{align*}

In \cite{bandgap}, it is proved that there exists a subwavelength spectral gap opening in the band structure.  Let us briefly review this result. 
We look for a solution $v$ of  \eqref{eq-scattering-quasiperiodic} which has the following form:
\begin{equation*} \label{Helm-solution}
v =
\begin{cases}
\mathcal{S}_{D}^{\alpha,k_w} [\varphi^\alpha]\quad & \text{in} ~ Y^2 \setminus \overline{D},\\
 \mathcal{S}_{D}^{k_b} [\psi^\alpha]   &\text{in} ~   {D},
\end{cases}
\end{equation*}
for some densities $\varphi^\alpha, \psi^\alpha \in  L^2(\p D)$. 
Using the jump relations for the single layer potentials, one can show that~\eqref{eq-scattering-quasiperiodic} is equivalent to the boundary integral equation
\begin{equation}  \label{eq-boundary}
\mathcal{A}^\alpha(\omega, \delta)[\Phi^\alpha] =0,  
\end{equation}
where
\[
\mathcal{A}^\alpha(\omega, \delta) = 
 \begin{pmatrix}
  \mathcal{S}_D^{k_b} &  -\mathcal{S}_D^{\alpha,k}  \\
  -\frac{1}{2}+ \K_D^{k_b,*}& -\delta\left( \frac{1}{2}+ \left(\mathcal{K}_D^{ -\alpha,k}\right)^*\right)
\end{pmatrix}, 
\,\, \Phi^\alpha= 
\begin{pmatrix}
\varphi^\alpha\\
\psi^\alpha
\end{pmatrix}.
\]

Since it can be shown that $\omega=0$ is a characteristic value for the operator-valued analytic function $\mathcal{A}(\omega,0)$, we can conclude the following result by the Gohberg-Sigal theory \cite{MaCMiPaP, Gohberg1971}.
\begin{lem}
For any $\delta$ sufficiently small, there exists a characteristic value 
$\omega_1^\alpha= \omega_1^\alpha(\delta)$ to the operator-valued analytic function 
$\mathcal{A}^\alpha(\omega, \delta)$
such that $\omega_1^\alpha(0)=0$ and 
$\omega_1^\alpha$ depends on $\delta$ continuously.
\end{lem}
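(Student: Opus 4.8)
The plan is to deduce the statement from the Gohberg--Sigal theory for operator-valued analytic functions (the generalized Rouch\'e theorem together with the continuity of characteristic values under analytic perturbation), once two structural facts have been checked: that $\omega\mapsto\mathcal{A}^\alpha(\omega,\delta)$ is, in a neighbourhood of $\omega=0$, a Fredholm-operator-valued function that is analytic in $\omega$ and analytic in the parameter $\delta$; and that $\omega=0$ is an isolated characteristic value of $\mathcal{A}^\alpha(\cdot,0)$ of finite, nonzero multiplicity. The first fact follows from the low-frequency expansion \eqref{eq:Sexpansion} (and its analogues for $\mathcal{S}_D^{\alpha,k}$ and for the Neumann--Poincar\'e operators): these show that $\mathcal{A}^\alpha(\omega,\delta)$ extends analytically in $(\omega,\delta)$ near $(0,0)$ to a bounded operator on $L^2(\partial D)\times L^2(\partial D)$, and that each $\mathcal{A}^\alpha(\omega,\delta)$ is a compact perturbation of an invertible operator, hence Fredholm of index zero; consequently $\omega=0$ is either a regular point or an isolated characteristic value of finite multiplicity for $\mathcal{A}^\alpha(\cdot,0)$.

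First I would verify that $\omega=0$ is indeed a characteristic value of $\mathcal{A}^\alpha(\cdot,0)$. At $\delta=0$ and $\omega=0$ the operator is block lower-triangular, with diagonal entries $\mathcal{S}_D^0$ and $0$ and $(2,1)$-entry $-\tfrac12 I+\mathcal{K}_D^{0,*}$. The operator $-\tfrac12 I+\mathcal{K}_D^{0,*}$ has a one-dimensional kernel, spanned by a density $\psi_0$ for which $\mathcal{S}_D^0[\psi_0]$ is constant on $\partial D$ and $\int_{\partial D}\psi_0\neq0$; choosing $\varphi_0$ with $\mathcal{S}_D^0[\varphi_0]=\mathcal{S}_D^{\alpha,0}[\psi_0]$ (using invertibility of $\mathcal{S}_D^{\alpha,0}$ for $\alpha\neq0$, and that the right-hand side lies in the range of $\mathcal{S}_D^0$) produces a nonzero element $(\varphi_0,\psi_0)\in\ker\mathcal{A}^\alpha(0,0)$. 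For the isolation, I would check that $\mathcal{A}^\alpha(\omega,0)$ is invertible for $0<|\omega|$ small: the second block equation determines $\varphi$ via $(-\tfrac12 I+\mathcal{K}_D^{k_b,*})^{-1}$, which exists because $\omega^2$ is not a Neumann eigenvalue of $-\Delta$ on $D$ for small $\omega\neq0$, and the first block equation then determines $\psi$ via $(\mathcal{S}_D^{\alpha,k})^{-1}$.

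With these two facts in hand, fix a small circle $\gamma=\{\,|\omega|=r\,\}$ whose interior contains no characteristic value of $\mathcal{A}^\alpha(\cdot,0)$ other than $0$. Because $\mathcal{A}^\alpha(\omega,\delta)\to\mathcal{A}^\alpha(\omega,0)$ uniformly for $\omega\in\gamma$ as $\delta\to0$, the generalized Rouch\'e theorem of Gohberg--Sigal \cite{MaCMiPaP, Gohberg1971} implies that for all sufficiently small $\delta$ the total multiplicity of the characteristic values of $\mathcal{A}^\alpha(\cdot,\delta)$ lying inside $\gamma$ equals that for $\delta=0$, and in particular is nonzero; this gives a characteristic value $\omega_1^\alpha(\delta)$ of $\mathcal{A}^\alpha(\cdot,\delta)$ inside $\gamma$. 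Since $r>0$ can be taken arbitrarily small we get $\omega_1^\alpha(\delta)\to0=\omega_1^\alpha(0)$ as $\delta\to0$, and running the same Rouch\'e argument on a small circle around an arbitrary $\omega_1^\alpha(\delta_0)$ (for $\delta_0$ small) gives the continuous dependence of $\omega_1^\alpha$ on $\delta$.

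The hard part will be the verification that $\omega=0$ is a characteristic value rather than the Gohberg--Sigal machinery itself: one must pin down the kernel of $-\tfrac12 I+\mathcal{K}_D^{0,*}$ and, crucially, rule out other characteristic values accumulating at $\omega=0$ along the slice $\delta=0$. The remaining effort is the somewhat delicate low-frequency bookkeeping that makes the analytic-Fredholm framework legitimate, the logarithmic singularity of the two-dimensional Helmholtz Green's function making these expansions more involved than their three-dimensional counterparts.
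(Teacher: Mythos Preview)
Your approach is exactly the one the paper takes: it simply records that $\omega=0$ is a characteristic value of $\mathcal{A}^\alpha(\cdot,0)$ and invokes the Gohberg--Sigal theory, without spelling out any of the details you supply. So strategically you are aligned with the paper.

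That said, two bookkeeping slips in your kernel construction should be fixed. First, at $(\omega,\delta)=(0,0)$ the operator is \emph{not} block lower-triangular: the $(1,2)$ entry is $-\mathcal{S}_D^{\alpha,0}\neq0$; only the $(2,2)$ entry vanishes. Second, and relatedly, your labels are swapped. The second row of $\mathcal{A}^\alpha(0,0)\binom{\varphi}{\psi}=0$ reads $\big(-\tfrac12 I+\mathcal{K}_D^{0,*}\big)[\varphi]=0$, so it is $\varphi$, not $\psi$, that must lie in $\ker\big(-\tfrac12 I+\mathcal{K}_D^{0,*}\big)$; the first row then determines $\psi=(\mathcal{S}_D^{\alpha,0})^{-1}\mathcal{S}_D^{0}[\varphi]$, using invertibility of $\mathcal{S}_D^{\alpha,0}$ for $\alpha\neq0$ (no range condition on $\mathcal{S}_D^0$ is needed). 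With your labelling, $(\varphi_0,\psi_0)$ satisfies the first row but generically not the second. Once the roles of $\varphi$ and $\psi$ are corrected, the isolation argument (invertibility of $-\tfrac12 I+\mathcal{K}_D^{\omega,*}$ for small $\omega\neq0$, then of $\mathcal{S}_D^{\alpha,\omega}$) and the Rouch\'e/continuity step go through as you outline.
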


The next theorem gives the asymptotic expansion of $\omega_1^\alpha$ as $\delta\rightarrow 0$.

\begin{thm}{\rm{\cite{bandgap}}} \label{approx_thm} For $\alpha \ne 0$ and sufficiently small $\delta$, we have
	\begin{align*}
	\omega_1^\alpha= \sqrt{\frac{\delta \capacity_{D,\alpha}}{|D|}} + O(\delta^{3/2}), \label{o_1_alpha}
	\end{align*}
	where the constant $\capacity_{D,\alpha}$ is given by
	$$\capacity_{D,\alpha}:= - \langle(\mathcal{S}_D^{\alpha,0})^{-1} [\chi_{\partial D}], \chi_{\partial D}\rangle.$$
	Here,  $\langle \,\cdot \,,\,\cdot\, \rangle$ stands for the standard inner product of $L^2(\partial D)$ and $\chi_{\partial D}$ denotes the characteristic function of $\partial D$.
\end{thm}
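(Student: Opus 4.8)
The plan is to reduce the characteristic‑value problem $\mathcal{A}^\alpha(\omega,\delta)[\Phi^\alpha]=0$ to a single scalar equation and then read off the leading‑order balance. With $\psi$ the boundary density of the field inside $D$ and $\varphi$ that of the field outside, the two rows of the system read $\mathcal{S}_D^{k_b}[\psi]=\mathcal{S}_D^{\alpha,k}[\varphi]$ and $\bigl(-\tfrac12 I+\K_D^{k_b,*}\bigr)[\psi]=\delta\,\bigl(\tfrac12 I+(\K_D^{-\alpha,k})^*\bigr)[\varphi]$ on $\partial D$, with $k=k_b=\omega$ by Assumption \ref{assumption1}. Since $\mathcal{S}_D^{\alpha,0}$ is invertible for $\alpha\ne0$, so is $\mathcal{S}_D^{\alpha,k}$ for small $\omega$, and eliminating $\varphi$ leaves
\[
\mathcal{R}^\alpha(\omega,\delta)[\psi]:=\Bigl(-\tfrac12 I+\K_D^{k_b,*}\Bigr)[\psi]-\delta\,\Bigl(\tfrac12 I+(\K_D^{-\alpha,k})^*\Bigr)(\mathcal{S}_D^{\alpha,k})^{-1}\mathcal{S}_D^{k_b}[\psi]=0 .
\]
I would then expand in $\omega$, using $\mathcal{S}_D^{k_b}[\phi]=\mathcal{S}_D^0[\phi]+\eta_\omega\langle\phi,\chi_{\partial D}\rangle\chi_{\partial D}+O(\omega^2\ln\omega)$ and $\K_D^{k_b,*}=\K_D^{0,*}+O(\omega^2\ln\omega)$, whereas $\mathcal{S}_D^{\alpha,k}$ and $(\K_D^{-\alpha,k})^*$ are analytic in $\omega$ with no logarithmic terms (this is where $\alpha\ne0$ enters). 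The operator $\mathcal{L}_0:=-\tfrac12 I+\K_D^{0,*}$ is Fredholm of index zero with one‑dimensional kernel spanned by the equilibrium density $\psi_0$ (normalised by $\mathcal{S}_D^0[\psi_0]\equiv1$ in $D$, so $\langle\psi_0,\chi_{\partial D}\rangle=-\capacity_D$) and one‑dimensional cokernel spanned by $\chi_{\partial D}$, since $\K_D^0[\chi_{\partial D}]=\tfrac12\chi_{\partial D}$.

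A Lyapunov--Schmidt reduction around $\omega=\delta=0$ then applies: writing $\psi=\psi_0+\psi^\perp$ with $\langle\psi^\perp,\chi_{\partial D}\rangle=0$, the range component of $\mathcal{R}^\alpha(\omega,\delta)[\psi]=0$ fixes $\psi^\perp=O(\delta|\ln\delta|)$ by a contraction argument, and the problem collapses to the scalar bifurcation equation $F(\omega,\delta):=\langle\mathcal{R}^\alpha(\omega,\delta)[\psi],\chi_{\partial D}\rangle=0$. This is evaluated by two elementary integral identities. First, the divergence theorem over $D$ applied to $\Gamma^{k_b}-\Gamma^0$ (which is $C^1$ across $\partial D$) gives $(\K_D^{k_b}-\K_D^0)[\chi_{\partial D}]=-k_b^2\int_D\Gamma^{k_b}(\cdot,y)\dx y$; transferring $\K_D^{k_b,*}$ onto $\chi_{\partial D}$, and using $\mathcal{S}_D^{k_b}[\psi_0]\equiv1-\eta_\omega\capacity_D+O(\omega^2\ln\omega)$ in $D$ together with $\langle\psi^\perp,\chi_{\partial D}\rangle=0$, yields
\[
\Bigl\langle\Bigl(-\tfrac12 I+\K_D^{k_b,*}\Bigr)[\psi],\chi_{\partial D}\Bigr\rangle=-k_b^2\!\int_D\mathcal{S}_D^{k_b}[\psi]\dx y=-\omega^2|D|\bigl(1-\eta_\omega\capacity_D\bigr)+O(\delta^2|\ln\delta|).
\]
Second, the jump relation together with $\mathcal{S}_D^{\alpha,0}[(\mathcal{S}_D^{\alpha,0})^{-1}\chi_{\partial D}]\equiv1$ in $D$ forces $\bigl(\tfrac12 I+(\K_D^{-\alpha,0})^*\bigr)(\mathcal{S}_D^{\alpha,0})^{-1}[\chi_{\partial D}]=(\mathcal{S}_D^{\alpha,0})^{-1}[\chi_{\partial D}]$, so the $\delta$‑part of $F$ equals $\delta\,\langle(\mathcal{S}_D^{\alpha,0})^{-1}[\chi_{\partial D}],\chi_{\partial D}\rangle\,(1-\eta_\omega\capacity_D)\cdot(-1)+O(\delta^2|\ln\delta|)=\delta\,\capacity_{D,\alpha}(1-\eta_\omega\capacity_D)+O(\delta^2|\ln\delta|)$. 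Hence
\[
F(\omega,\delta)=\bigl(1-\eta_\omega\capacity_D\bigr)\bigl(-\omega^2|D|+\delta\,\capacity_{D,\alpha}\bigr)+O(\delta^2|\ln\delta|).
\]

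Along the curve $\omega\asymp\sqrt\delta$ one has $|1-\eta_\omega\capacity_D|\gtrsim|\ln\delta|\to\infty$, so dividing $F(\omega,\delta)=0$ by $1-\eta_\omega\capacity_D$ gives $\omega^2=\delta\,\capacity_{D,\alpha}/|D|+O(\delta^2)$, that is $\omega_1^\alpha=\sqrt{\delta\,\capacity_{D,\alpha}/|D|}+O(\delta^{3/2})$; reality of $\omega_1^\alpha$ follows from $\capacity_{D,\alpha}>0$, which holds because $\mathcal{S}_D^{\alpha,0}$ is negative for $\alpha\ne0$. Rigour is obtained by inserting the already‑established root $\omega_1^\alpha(\delta)$ (with $\omega_1^\alpha(0)=0$) into $F(\omega_1^\alpha(\delta),\delta)=0$, or equivalently from the generalised Rouch\'e theorem (Gohberg--Sigal theory) applied to $F$.

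I expect the real difficulty to be the two‑dimensional logarithm: $\mathcal{S}_D^{k_b}$ carries a $\ln\omega$ singularity as $\omega\to0$, so one must check that this does not pollute the clean square‑root law with $\ln\delta$ factors. It does not, for exactly the three structural reasons displayed above --- the exterior operators $\mathcal{S}_D^{\alpha,k}$ and $(\K_D^{-\alpha,k})^*$ carry no logarithm since $\alpha\ne0$; the normalisation $\langle\psi^\perp,\chi_{\partial D}\rangle=0$ confines every occurrence of $\eta_\omega$ to the single common factor $1-\eta_\omega\capacity_D$; and that factor is large and hence divides cleanly out of $F$. A minor point is that $\mathcal{S}_D^0$ must be invertible for $\psi_0$ and $\capacity_D$ to be defined as above; should $D$ have logarithmic capacity equal to $1$, one rescales $D$ (or replaces $\mathcal{S}_D^0$ by the standard regularised single‑layer potential), which does not affect the statement.
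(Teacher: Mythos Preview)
The paper does not supply its own proof of this theorem; it is quoted from \cite{bandgap} and used as a black box. So there is nothing in the present paper to compare your argument against directly.

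That said, your Lyapunov--Schmidt reduction is essentially the strategy carried out in the cited reference: eliminate one density using the invertibility of $\mathcal{S}_D^{\alpha,k}$ for $\alpha\ne0$, project onto the one-dimensional kernel/cokernel of $-\tfrac12 I+\K_D^{0,*}$, and solve the resulting scalar bifurcation equation. Your identification of the two structural identities --- the divergence-theorem computation of $\langle(-\tfrac12 I+\K_D^{k_b,*})[\psi],\chi_{\partial D}\rangle$ and the relation $\bigl(\tfrac12 I+(\K_D^{-\alpha,0})^*\bigr)(\mathcal{S}_D^{\alpha,0})^{-1}[\chi_{\partial D}]=(\mathcal{S}_D^{\alpha,0})^{-1}[\chi_{\partial D}]$ --- is exactly right, and your handling of the logarithm (confining every $\eta_\omega$ to the single factor $1-\eta_\omega\capacity_D$, which is large along $\omega\asymp\sqrt\delta$ and hence divides out cleanly) is the crux of the matter and is sound. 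One small caution: be precise about which column of $\mathcal{A}^\alpha$ carries the interior density in this paper's convention (here $\varphi^\alpha$ is the interior density and $\psi^\alpha$ the exterior one), though this is purely notational and does not affect the argument.
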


Let $\omega_1^*=\max_\alpha \omega_1^\alpha$.
The following theorem expresses the fact that a subwavelength bandgap opens in the band structure of the bubbly crystal.

\begin{thm}{\rm{\cite{bandgap}}} \label{main_bandgap}
	For every $\epsilon>0$, there exists $\delta_0>0$  and $\tilde \omega > \omega_1^*$ such that 
	\begin{equation*}
	[ \omega_1^*+\epsilon, \tilde\omega ] \subset [\max_\alpha \omega_1^\alpha, \min_\alpha \omega_2^\alpha]
	\end{equation*} 
	for $\delta<\delta_0$.
\end{thm}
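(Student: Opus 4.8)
This is the main result of \cite{bandgap}; here is how one would prove it. The underlying mechanism is a separation of scales: as $\delta\to0$ the first band $\omega_1^\alpha$ collapses onto the origin, while the second band $\omega_2^\alpha$ stays bounded away from $0$ uniformly in $\alpha$. Granting these two facts, one fixes $\tilde\omega$ to be a lower bound for $\min_\alpha\omega_2^\alpha$ (valid for all small $\delta$) and takes $\delta_0$ small enough that $\omega_1^*=\max_\alpha\omega_1^\alpha<\tilde\omega$; then for $\delta<\delta_0$ the inclusion $[\omega_1^*+\epsilon,\tilde\omega]\subset[\max_\alpha\omega_1^\alpha,\min_\alpha\omega_2^\alpha]$ holds, the left endpoint being automatic since $\omega_1^*=\max_\alpha\omega_1^\alpha$, the right one by the choice of $\tilde\omega$. (If $\epsilon$ is so large that $\omega_1^*+\epsilon>\tilde\omega$, the claimed interval is empty and there is nothing to show.)

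The collapse of the first band is the easy half. By the lemma preceding Theorem~\ref{approx_thm}, $\omega_1^\alpha=\omega_1^\alpha(\delta)$ is continuous in $\delta$ with $\omega_1^\alpha(0)=0$; granting joint continuity in $(\alpha,\delta)$ and using compactness of the Brillouin zone $Y^*\times Y^*$ one gets $\omega_1^*(\delta)\to0$ as $\delta\to0$. Theorem~\ref{approx_thm} sharpens this to $\omega_1^*\le C_1\sqrt{\delta}$ for a constant $C_1$ independent of $\delta$, provided one checks that $\alpha\mapsto\capacity_{D,\alpha}$ stays bounded on $Y^*\times Y^*$ — the one point requiring care being $\alpha\to0$, where $\mathcal{S}_D^{\alpha,0}$ degenerates while $\omega_1^\alpha$ itself tends to $0$ (at $\omega=0$, $\alpha=0$ the constant function solves \eqref{eq-scattering-quasiperiodic}).

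The substantial part is the uniform lower bound $\min_\alpha\omega_2^\alpha\ge C_2>0$ with $C_2$ independent of $\delta$. One analyzes the characteristic values of $\mathcal{A}^\alpha(\omega,\delta)$ near $\omega=0$. At $\delta=0$ the matrix operator $\mathcal{A}^\alpha(\omega,0)$ is block triangular, and one shows that $\omega=0$ is its only characteristic value in a fixed small disk $B_r(0)$ of the $\omega$-plane, with multiplicity one: the kernel of $\mathcal{A}^\alpha(0,0)$ is spanned by the equilibrium density in $\ker(-\frac{1}{2}I+\K_D^{0,*})$ together with the invertibility of $\mathcal{S}_D^{\alpha,0}$, while the remaining characteristic values — coming from the interior Neumann spectrum of $D$ and the quasi-periodic cell problem — are of order one. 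Applying the Gohberg--Sigal theory exactly as for that lemma, the number of characteristic values of $\mathcal{A}^\alpha(\cdot,\delta)$ in $B_r(0)$, counted with multiplicity, equals that of $\mathcal{A}^\alpha(\cdot,0)$ for all small $\delta$, hence is one; so $\omega_1^\alpha$ is the only one there and $\omega_2^\alpha\ge r$. Upgrading this to a bound with $r=C_2$ independent of $\alpha$ — by a compactness argument over $Y^*\times Y^*$, using continuity of $\mathcal{A}^\alpha$ and of the relevant contour integrals in $\alpha$, with separate treatment near $\alpha=0$ where the layer potential is not invertible — is the step I expect to be the main obstacle. Once it is done, the theorem follows with $\tilde\omega:=C_2$ and $\delta_0$ chosen so that $C_1\sqrt{\delta_0}<C_2$.
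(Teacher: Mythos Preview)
The paper does not contain a proof of this theorem at all: it is quoted verbatim from \cite{bandgap} as background in Section~\ref{subsec:bandgap}, with no argument given. So there is nothing in the present paper to compare your proposal against.

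That said, your outline is the standard route and matches the strategy of the cited reference. The two ingredients you isolate --- the collapse $\omega_1^*=O(\sqrt{\delta})$ via Theorem~\ref{approx_thm}, and a uniform-in-$\alpha$ lower bound on $\omega_2^\alpha$ obtained by counting characteristic values of $\mathcal{A}^\alpha(\cdot,\delta)$ near $\omega=0$ with the Gohberg--Sigal theorem --- are exactly the ones used in \cite{bandgap}. You have also correctly flagged the genuine technical issue: the uniformity in $\alpha$ near $\alpha=0$, where $\mathcal{S}_D^{\alpha,0}$ fails to be invertible, requires separate treatment. Your sketch is accurate as a summary; filling in the uniformity step is where the work lies.
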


\section{Integral representation for bubbly crystals with a defect} \label{sec-2}

\subsection{Formulation of the line defect problem}

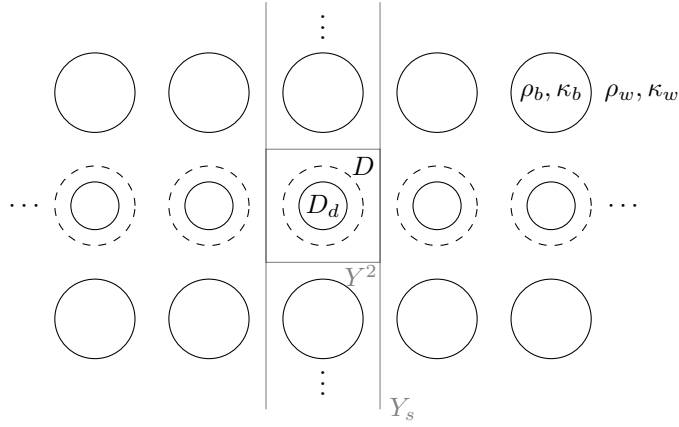
\begin{figure}[tb]
    \centering
    \begin{tikzpicture}[scale=1.5]
    
    \draw[dashed] (0,0) circle (10pt) node[yshift=15pt, xshift=15pt ]{$D$};
    \draw (0,0) circle (6pt) node{$D_d$};
    \draw[dashed] (-2,0) circle (10pt);
    \draw[dashed] (-1,0) circle (10pt);
    \draw[dashed] (2,0) circle (10pt);
    \draw[dashed] (1,0) circle (10pt);
    \draw (-2,0) circle (6pt);
	\draw (-1,0) circle (6pt);
	\draw (2,0) circle (6pt);
	\draw (1,0) circle (6pt);    
    
    \draw (0,1) circle (10pt);
    \draw (1,1) circle (10pt);
    \draw (0,-1) circle (10pt);
    \draw (1,-1) circle (10pt);
    \draw (-1,1) circle (10pt);
    \draw (-1,-1) circle (10pt);
    \draw (2,1) circle (10pt);
    \draw (2,-1) circle (10pt);
    \draw (-2,1) circle (10pt);
    \draw (-2,-1) circle (10pt);
    \draw (2.65,0) node{$\cdots$};
    \draw (-2.6,0) node{$\cdots$};
    \draw (0,1.65) node{$\vdots$};
    \draw (0,-1.5) node{$\vdots$};
    \draw (2.8,1) node{$\rho_w, \kappa_w$};
    \draw (2,1) node{$\rho_b, \kappa_b$};
    
    \draw[opacity = 0.5] (0.5,0.5) -- (0.5,-0.5) node[yshift=-5pt, xshift=-7pt ]{$Y^2$} -- (-0.5,-0.5) -- (-0.5,0.5) -- cycle;
    \draw[opacity = 0.5] (-0.5,-1.8) -- (-0.5,1.8) 
    					 (0.5,-1.8) node[right]{$Y_s$} -- (0.5,1.8);
    \end{tikzpicture}
    \caption{Illustration of the defect crystal and the material parameters.} \label{fig:defect}
\end{figure}
In the following, we will consider the case when all the bubbles are circular disks. This gives a convenient presentation, and makes the problem similar to the point defect problem studied in \cite{defectSIAM,defectX}. In Appendix \ref{app:noncirc}, we will outline the analysis in the case of non-circular bubbles.

Consider a perturbed crystal, where all the disks along the $x_1$-axis are replaced by defect disks of radius $R_d$ with $0< R_d<R$. Denote the centre defect disk by $D_d$ and let
$$\C_d = \left( \bigcup_{m\in \Z} D_d + (m,0)\right) \cup \left( \bigcup_{\substack{ m\in \Z \\ n\in\Z\setminus\{0\}}} D+(m,n) \right)$$
be the perturbed crystal, depicted in Figure \ref{fig:defect}. Moreover, let $\epsilon = R_d-R<0, \epsilon \in (-R,0)$ be the perturbation of the radius. Define
$$\rho_d(x) = \rho_b\chi_{\C_d}(x) + \rho_w(1-\chi_{\C}(x)), \quad \kappa_d(x) = \kappa_b\chi_{\C}(x) + \kappa_w(1-\chi_{\C_d}(x)).$$

The wave propagation problem inside the periodic crystal can be modelled as
\begin{equation}\label{eq:scattering} \kappa_d(x) \nabla \cdot \left(\frac{1}{\rho_d(x)} \nabla u(x) \right) + \omega^2 u(x) = 0, \quad x\in\R^2.\end{equation}
We  denote by $\Lambda_d$ the set of propagating frequencies in the line defect crystal, \ie{} the set of $\omega$ such that $\omega^2$ is in the spectrum of the operator
$$-\kappa_d \nabla \cdot \frac{1}{\rho_d} \nabla.$$

Since the defect crystal is periodic in the $x_1$-direction, we can use the Floquet transformation to decompose \eqnref{eq:scattering} as
\begin{equation}\label{eq:scattering-strip} \begin{cases}
\ds \kappa_d(x) \nabla \cdot \left(\frac{1}{\rho_d(x)} \nabla u(x) \right) + \omega^2 u(x) = 0, \quad x \in Y_s, \\ 
\ds e^{-i \alpha_1 x_1} u  \,\,\,  \mbox{is periodic in } x_1,
\end{cases}
\end{equation}
where $\alpha_1 \in Y^*$ and $Y_s$ again denotes the strip $Y_s = [-1/2,1/2)\times \R$. We will denote by $\Lambda_{d,\alpha_1}$ the set of $\omega$ such that $\omega^2$ is in the spectrum of the operator implied by \eqnref{eq:scattering-strip} and by $\Lambda^{ess}_{d,\alpha_1}$ the corresponding essential part of the spectrum. 

In the strip $Y_s$, the perturbations $\rho_d-\rho$ and $\kappa_d-\kappa$ have compact support. Since the essential spectrum is stable under compact perturbations \cite{Figotin,MMMP4}, it can be shown that the essential spectra $\Lambda^{ess}_{0,\alpha_1}$ and $\Lambda^{ess}_{d,\alpha_1}$ coincide.

In this paper, we want to show that introducing the line defect creates a defect band $\omega^\epsilon(\alpha_1) \notin \Lambda_{0,\alpha_1}$. Moreover, we want to show that $\epsilon$ can be chosen such that $\omega^\epsilon(\alpha_1) \notin \Lambda_0$ for all $\alpha_1 \in Y^*$, which means that any Bloch mode is localized to the line defect. We also want to show that $\omega^\epsilon(\alpha_1)$ is not contained in the pure point part of $\Lambda_{d,\alpha_1}$, which means that there are no bound modes in the defect direction.

\subsection{Effective sources for the defect}
Here we describe an effective sources approach to
the solution of \eqnref{eq:scattering-strip} in the strip. The idea is to model the defect bubble $D_d$ as an unperturbed bubble $D$ with additional fictitious monopole and dipole sources $f$ and $g$. This method was originally introduced in \cite{Wilcox} and then it was applied in \cite{defectSIAM,defectX} for a point defect in a bubbly crystal.

Let us consider the following problem:
\begin{equation} \label{eq:scattering_fictitious}
\left\{
\begin{array} {ll}
	&\ds \nabla \cdot \frac{1}{\rho_w} \nabla  \widetilde{u}+ \frac{\omega^2}{\kappa_w} \widetilde{u}  = 0 \quad \text{in} \quad Y_s \setminus \C, \\
	\nm
	&\ds \nabla \cdot \frac{1}{\rho_b} \nabla  \widetilde{u}+ \frac{\omega^2}{\kappa_b} \widetilde{u}  = 0 \quad \text{in} \quad Y_s\cap\C, \\
	\nm
	&\ds  \widetilde{u}|_{+} -\widetilde{u}|_{-}  =f\delta_{m,0}  \quad \text{on} \quad \partial D+(0,m), \ m\in\Z, \\
	\nm
	& \ds  \frac{1}{\rho_w} \frac{\partial \widetilde{u}}{\partial \nu} \bigg|_{+} - \frac{1}{\rho_b} \frac{\partial \widetilde{u}}{\partial \nu} \bigg|_{-} =g\delta_{m,0} \quad \text{on} \quad \partial D+(0,m), \ m\in\Z, \\
	\nm
	& \ds e^{-i \alpha_1 x_1} \widetilde u  \,\,\,  \mbox{is periodic in } x_1,
\end{array}
\right.
\end{equation} 
where $f$ and $g$ are the source terms and $\delta_{m,n}$ is the Kronecker delta function.
Note that the sources are present only on the boundary of the central bubble $D$. 

We  denote the solution to the original problem \eqnref{eq:scattering-strip} by $u$ and the effective source solution \eqref{eq:scattering_fictitious} by $\widetilde{u}$. We want to find appropriate conditions on $f$ and $g$ in order to achieve
\begin{equation}\label{eq:coincide}
u \equiv \widetilde{u}  \quad \mbox{in } (Y_s\setminus D) \cup D_d.
\end{equation}
Then $u$ can be recovered by extending $\widetilde{u}$ to the whole region including $D\setminus D_d$ with boundary conditions on $\p D$ and $\p D_d$.
The conditions for the effective sources $f$ and $g$, which are necessary in order to correctly model the defect, will be characterized in the next subsection.

\subsection{Characterization of the effective sources}\label{subsec:char_eff}
Here we clarify the relation between the effective source pair $(f,g)$ and the layer density pair $(\varphi,\psi)$ defined in equation \eqnref{eq:u_rep_effective} below. 

First, we observe that away from the central unit cell $Y^2$, the equations \eqnref{eq:scattering-strip} and \eqref{eq:scattering_fictitious} satisfy the same geometric and quasi-periodic conditions. Thus, in order for \eqnref{eq:coincide} to hold, it is sufficient for $u$ and  $\widetilde{u}$ to coincide inside the central unit cell $Y^2$.

Inside $Y^2$, the solution $\widetilde{u}$ can be represented as
\begin{align}\label{eq:u_rep_effective}
\widetilde{u}
= 
\begin{cases}
H + \S_D^{k_w}[\psi] &\quad \mbox{in } Y^2\setminus \overline{D},
\\
\S_D^{k_b}[\varphi] &\quad \mbox{in } D,
\end{cases}
\end{align}
for some pair $(\varphi,\psi)\in L^2(\p D)^2$, with $H$ satisfying the homogeneous equation $(\Delta + k_w^2) H = 0$ in $Y^2$. In (\ref{eq:u_rep_effective}), the local properties of $\widetilde{u}$ around $\partial D$ are given by the single-layer potentials, while $H$ can be chosen to make $\widetilde{u}$ satisfy the quasi-periodic condition. From the jump conditions given in Section \ref{sec:layerpot}, the pair $(\varphi,\psi)$ satisfies
\begin{equation}\label{eq:AD}
\mathcal{A}_D
\begin{pmatrix}
\varphi 
\\
\psi
\end{pmatrix}
:=
\begin{pmatrix}
\S_D^{k_b} & - \S_D^{k_w}
\\[0.3em]
\ds \partial \S_D^{k_b}/\partial \nu |_{-}
&
\ds -\delta \partial \S_D^{k_w} / \partial \nu|_{+}
\end{pmatrix}
\begin{pmatrix}
\varphi 
\\
\psi
\end{pmatrix}
=
\begin{pmatrix}
H|_{\partial D}- f
\\[0.3em]
\ds\partial H/\partial \nu  |_{\partial D} - g
\end{pmatrix}.
\end{equation}

Similarly, inside $Y^2$, the solution $u$ can be represented as
\begin{align*}
u=
\begin{cases}
H + \S_{D_d}^{k_w}[\psi_d] &\quad \mbox{in } Y^2\setminus \overline{D_d},
\\
\S_{D_d}^{k_b}[\varphi_d] &\quad \mbox{in } D_d,
\end{cases}
\end{align*}
where
\begin{equation}\label{eq:ADd_original}
\A_{D_d}
\begin{pmatrix}
\varphi_d 
\\
\psi_d
\end{pmatrix}:=
\begin{pmatrix}
\S_{D_d}^{k_b} & - \S_{D_d}^{k_w}
\\[0.3em]
\ds \partial \S_{D_d}^{k_b} /\partial \nu|_{-}
&
\ds -\delta \partial \S_{D_d}^{k_w}/\partial \nu  |_{+}
\end{pmatrix}
\begin{pmatrix}
\varphi_d 
\\
\psi_d
\end{pmatrix}
=
\begin{pmatrix}
H|_{\partial {D_d}}
\\[0.3em]
\ds\partial H /\partial \nu |_{\partial {D_d}}
\end{pmatrix}.
\end{equation}

Now, having the two solutions coincide inside $(Y^2\setminus D) \cup D_d$ is equivalent to the conditions
\begin{equation}\label{eq:SDdSD_inside}
\mathcal{S}_{D_d}^{k_b}[\varphi_d] \equiv \mathcal{S}_D^{k_b}[\varphi] \quad\mbox{in }D_d,
\end{equation}
and
\begin{equation}\label{eq:SDdSD_outside}
\mathcal{S}_{D_d}^{k_w}[\psi_d] \equiv \mathcal{S}_D^{k_w}[\psi] \quad\mbox{in }Y^2\setminus \overline{D}.
\end{equation}
Assuming $D$ is a disk, the above equations were solved in \cite{defectSIAM,defectX}, and we state the results in Proposition \ref{prop:effective} below. First, we introduce some notation. 
Since $D$ and $D_d$ are circular disks, we can use a Fourier basis for functions in $L^2(\p D)$ or $L^2(\p D_d)$. For $n\in \Z$, define the subspace $V_n$ of $L^2(\p D)$ as $V_n : = \mbox{span}\{ e^{im \theta}\}$. Then define the subspace $V_{mn}$ of $L^2(\p D)^2$ as 
$$
V_{mn} : = V_m \times V_n, \quad
m,n\in \mathbb{Z}.
$$
Similarly, let ${V}_{mn}^d$ be the subspace of $L^2(\p D_d)^2$ with the same Fourier basis. Then it can be shown that the operator $\A_{D}$ in \eqref{eq:AD} has the following matrix representation as an operator from $V_{mn}$ to $V_{m'n'}$:
\begin{equation*}\label{eq:AD_multipole}
(\A_{D})_{V_{mn}\rightarrow V_{m'n'}} = \delta_{mn}\delta_{m'n'} \frac{(-i)\pi R}{2} \begin{pmatrix}
  J_n(k_b R)H_n^{(1)}(k_b R) & -  J_n(k_w R)H_n^{(1)}(k_w R)
 \\
 k_b J_n'(k_b R) H_n^{(1)}(k_b R) & - \delta k_w J_n(k_w R) \big(H_n^{(1)}\big)'(k_w R)
 \end{pmatrix}.
\end{equation*}
Similarly, the operator $\A_{D_d}$ in \eqref{eq:ADd_original} is represented as follows:
\begin{equation*}\label{eq:ADd_multipole}
(\A_{D_d})_{V_{mn}^d\rightarrow V_{m'n'}^d} = \delta_{mn}\delta_{m'n'} \frac{(-i)\pi R_d}{2} \begin{pmatrix}
  J_n(k_b R_d)H_n^{(1)}(k_b R_d) & -  J_n(k_w R_d)H_n^{(1)}(k_w R_d)
 \\
 k_b J_n'(k_b R_d) H_n^{(1)}(k_b R_d) & - \delta k_w J_n(k_w R_d) \big(H_n^{(1)}\big)'(k_w R_d)
 \end{pmatrix}.
\end{equation*}
In \cite{defectSIAM,defectX}, the following proposition was shown.
\begin{prop} \label{prop:effective}
	The density pair $(\varphi,\psi)$ and the effective sources $(f,g)$ satisfy the following relation 
	\begin{equation*}\label{eq:relation_density_source}
	(\mathcal{A}_D^\epsilon - \mathcal{A}_D)\begin{pmatrix}
	\varphi
	\\[0.3em]
	\psi
	\end{pmatrix} = 
	\begin{pmatrix}
	f
	\\[0.3em]
	g
	\end{pmatrix},
	\end{equation*}
	where the operators $\mathcal{P}_1: L^2(\p D)^2\rightarrow L^2(\p D_d)^2$ and $\mathcal{P}_2: L^2(\p D)^2\rightarrow L^2(\p D_d)^2$ are defined by
	\begin{align*}
	(\mathcal{P}_1)_{V_{mn}\rightarrow V_{m'n'}^d} &= \delta_{mn}   \delta_{m'n'}
	\frac{R}{R_d}\begin{pmatrix}
	\ds\frac{H_n^{(1)}(k_b R) }{H_n^{(1)}(k_b R_d)}
	& 0
	\\
	0& \ds\frac{J_n(k_w R) }{J_n(k_w R_d)}
	\end{pmatrix},
	\\
	(\mathcal{P}_2)_{V_{mn}\rightarrow V_{m'n'}^d} &= \delta_{mn}   \delta_{m'n'}
	\begin{pmatrix}
	\ds \frac{J_n(k_w R_d) }{J_n(k_w R)}
	& 0
	\\
	0& \ds\frac{J_n'(k_w R_d) }{J_n'(k_w R)}
	\end{pmatrix},
	\end{align*}
	and $\mathcal{A}_D^\epsilon$ is defined as
	\begin{equation}\label{eq:ADd}
	\mathcal{A}_D^\epsilon  := (\mathcal{P}_2)^{-1}\mathcal{A}_{D_d} \mathcal{P}_1 .
	\end{equation}
\end{prop}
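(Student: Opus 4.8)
The plan is to carry out everything in the Fourier (multipole) basis on the two concentric circles $\p D$ and $\p D_d$, and to read off $\mathcal{P}_1$, $\mathcal{P}_2$ and $\mathcal{A}_D^\epsilon$ from the separation-of-variables form of the single layer potential on a disk. Recall from Graf's addition theorem that, for a disk of radius $R$ centred at the origin and $\phi=e^{in\theta}$, one has $\mathcal{S}_D^{k}[e^{in\theta}](x)=-\tfrac{i\pi R}{2}\,J_n(k|x|)H_n^{(1)}(kR)\,e^{in\theta}$ for $|x|<R$ and $\mathcal{S}_D^{k}[e^{in\theta}](x)=-\tfrac{i\pi R}{2}\,J_n(kR)H_n^{(1)}(k|x|)\,e^{in\theta}$ for $|x|>R$, and similarly for $D_d$ with $R$ replaced by $R_d$; these are the identities already underlying the stated matrix forms of $\mathcal{A}_D$ and $\mathcal{A}_{D_d}$. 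Since all operators in sight act diagonally in $n$, it suffices to work one Fourier mode at a time, writing $\varphi=\sum_n\varphi_ne^{in\theta}$, $\psi=\sum_n\psi_ne^{in\theta}$ on $\p D$ and $\varphi_d=\sum_n(\varphi_d)_ne^{in\theta}$, $\psi_d=\sum_n(\psi_d)_ne^{in\theta}$ on $\p D_d$.

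First I would translate the matching conditions \eqnref{eq:SDdSD_inside}--\eqnref{eq:SDdSD_outside} into a relation between the density pairs. Because $\overline{D_d}\subset D$, the identity $\mathcal{S}_{D_d}^{k_b}[\varphi_d]\equiv\mathcal{S}_{D}^{k_b}[\varphi]$ on $D_d$ expands, via the interior formula above, as $R_d(\varphi_d)_nH_n^{(1)}(k_bR_d)J_n(k_b r)=R\varphi_nH_n^{(1)}(k_bR)J_n(k_b r)$ for $0<r<R_d$ and all $n$; since $J_n\not\equiv0$ this forces $R_d(\varphi_d)_nH_n^{(1)}(k_bR_d)=R\varphi_nH_n^{(1)}(k_bR)$. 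Restricting \eqnref{eq:SDdSD_outside} to the annulus $\{R<|x|<1/2\}\subset Y^2\setminus\overline D$ and using the exterior formula, where both sides carry only the outgoing radial part $H_n^{(1)}(k_w r)$, gives likewise $R_d(\psi_d)_nJ_n(k_wR_d)=R\psi_nJ_n(k_wR)$. Solving for $(\varphi_d)_n,(\psi_d)_n$ reproduces exactly the entries of $\mathcal{P}_1$, so $(\varphi_d,\psi_d)^{T}=\mathcal{P}_1(\varphi,\psi)^{T}$. Next, since $H$ solves the homogeneous equation $(\Delta+k_w^2)H=0$ on all of $Y^2$, in particular on the disk $D$, it admits a regular expansion $H=\sum_n h_nJ_n(k_w|x|)e^{in\theta}$ there; evaluating $H$ and $\p H/\p\nu$ on $\p D$ and on $\p D_d$ mode by mode gives $(H|_{\p D_d})_n=\tfrac{J_n(k_wR_d)}{J_n(k_wR)}(H|_{\p D})_n$ and $(\p H/\p\nu|_{\p D_d})_n=\tfrac{J_n'(k_wR_d)}{J_n'(k_wR)}(\p H/\p\nu|_{\p D})_n$, which is precisely the statement that $(H|_{\p D_d},\p H/\p\nu|_{\p D_d})^{T}=\mathcal{P}_2(H|_{\p D},\p H/\p\nu|_{\p D})^{T}$.

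Finally I would chain these together. By \eqnref{eq:ADd_original} and the two relations just established,
$$\mathcal{A}_{D_d}\mathcal{P}_1\begin{pmatrix}\varphi\\\psi\end{pmatrix}=\mathcal{A}_{D_d}\begin{pmatrix}\varphi_d\\\psi_d\end{pmatrix}=\begin{pmatrix}H|_{\p D_d}\\\p H/\p\nu|_{\p D_d}\end{pmatrix}=\mathcal{P}_2\begin{pmatrix}H|_{\p D}\\\p H/\p\nu|_{\p D}\end{pmatrix},$$
so applying $\mathcal{P}_2^{-1}$ and using the definition \eqnref{eq:ADd} of $\mathcal{A}_D^\epsilon$ gives $\mathcal{A}_D^\epsilon(\varphi,\psi)^{T}=(H|_{\p D},\p H/\p\nu|_{\p D})^{T}$. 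Comparing with \eqnref{eq:AD}, which reads $\mathcal{A}_D(\varphi,\psi)^{T}=(H|_{\p D}-f,\p H/\p\nu|_{\p D}-g)^{T}$, and subtracting yields $(\mathcal{A}_D^\epsilon-\mathcal{A}_D)(\varphi,\psi)^{T}=(f,g)^{T}$, which is the claim. The bookkeeping points that need a word of justification are: invertibility of $\mathcal{P}_2$ and well-definedness of $\mathcal{P}_1$ and $\mathcal{P}_2^{-1}$, which hold because in the subwavelength regime $k_wR$ and $k_wR_d$ are small, so $J_n(k_wR)$, $J_n'(k_wR)$, $J_n(k_wR_d)$ are nonzero, while $H_n^{(1)}$ has no zeros on the positive real axis; and the passage from equality of the layer potentials on an open overlap region to equality of all multipole coefficients, which follows from the linear independence of $\{J_n(kr)e^{in\theta}\}$. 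The only genuinely conceptual step---and the place where anything beyond computation happens---is the observation that, because $H$ is regular inside $D$ and $D_d$ is concentric with $D$, the whole transmission problem decouples into the three $n$-diagonal rescaling operators $\mathcal{P}_1$, $\mathcal{P}_2$, $\mathcal{A}_{D_d}$; everything else is Bessel-function algebra.
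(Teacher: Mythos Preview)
Your proposal is correct and follows essentially the same route as the paper, which in fact does not give its own proof here but cites \cite{defectSIAM,defectX} after laying out precisely the ingredients you use: the representations \eqref{eq:AD}--\eqref{eq:ADd_original}, the matching conditions \eqref{eq:SDdSD_inside}--\eqref{eq:SDdSD_outside}, and the multipole matrix forms of $\mathcal{A}_D$ and $\mathcal{A}_{D_d}$. Your mode-by-mode identification of $\mathcal{P}_1$ from the matching conditions, of $\mathcal{P}_2$ from the regular Bessel expansion of $H$ in $D$, and the final chaining $\mathcal{A}_{D_d}\mathcal{P}_1(\varphi,\psi)^T=\mathcal{P}_2(H|_{\p D},\p_\nu H|_{\p D})^T$ is exactly the intended computation.
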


\subsection{Floquet transform of the solution}
In view  of Proposition \ref{prop:effective}, we can identify the solutions $u$ and $\widetilde{u}$. In this section, we derive an integral equation for the effective source problem \eqref{eq:scattering_fictitious}. This problem is already quasi-periodically reduced in the $x_1$-direction, with quasi-periodicity $\alpha_1$. For some quasi-periodicity $\alpha_2 \in Y^*$, we set $\alpha = (\alpha_1,\alpha_2)$ and apply the Floquet transform to the solution $u$ in the $x_2$-direction as follows:
$$
u^\alpha = \sum_{m\in \Z} u(x-(0,m)) e^{i\alpha_2 m}.
$$
The transformed solution $u^\alpha$ satisfies
\begin{equation*} \label{eq:scattering_quasi}
\left\{
\begin{array} {ll}
	&\ds \nabla \cdot \frac{1}{\rho_w} \nabla  u^\alpha+ \frac{\omega^2}{\kappa_w} u^\alpha  = 0 \quad \text{in} \quad Y^2 \setminus \overline{D}, \\
	\nm
	&\ds \nabla \cdot \frac{1}{\rho_b} \nabla  u^\alpha+ \frac{\omega^2}{\kappa_b} u^\alpha  = 0 \quad \text{in} \quad D, \\
	\nm
	&\ds  u^\alpha|_{+} -u^\alpha|_{-}  =f   \quad \text{on} \quad \partial D, \\
	\nm
	& \ds  \frac{1}{\rho_w} \frac{\partial u^\alpha}{\partial \nu} \bigg|_{+} - \frac{1}{\rho_b} \frac{\partial u^\alpha}{\partial \nu} \bigg|_{-} =g \quad \text{on} \quad \partial D,
	\\
	\nm
	& \ds e^{-i \alpha \cdot x} u^\alpha \text{ is periodic}.
\end{array}
\right.
\end{equation*} 
The solution $u^\alpha$ is $\alpha$-quasi-periodic in the two-dimensional cell $Y^2$, and can be represented using quasi-periodic layer potentials as
\begin{align*}
u^\alpha
= 
\begin{cases}
\S_D^{\alpha,k_w}[\psi_\alpha], &\quad \mbox{in } Y^2\setminus \overline{D},
\\
\S_D^{k_b}[\varphi_\alpha], &\quad \mbox{in } D,
\end{cases}
\end{align*}
where, similarly as in equation \eqnref{eq-boundary}, the pair $(\varphi^\alpha,\psi^\alpha)\in L^2(\p D)^2$ is the solution to
\begin{equation*}\label{eq:phipsialpha}
\mathcal{A}^\alpha(\omega,\delta)
\begin{pmatrix}
\varphi^\alpha 
\\[0.3em]
\psi^\alpha
\end{pmatrix}
:=  \begin{pmatrix}
\mathcal{S}_D^{k_b} &  -\mathcal{S}_D^{\alpha,k}  \\[0.3em]
-\frac{1}{2}+ \K_D^{k_b,*}& -\delta\left( \frac{1}{2}+ \left(\mathcal{K}_D^{ -\alpha,k}\right)^*\right)
\end{pmatrix}
\begin{pmatrix}
\varphi^\alpha 
\\[0.3em]
\psi^\alpha
\end{pmatrix}
=
\begin{pmatrix}
- f
\\[0.3em]
- g
\end{pmatrix}.
\end{equation*}
Since the operator $\mathcal{A}^\alpha$ is invertible for small enough $\delta$ and for $\omega$ inside the bandgap  \cite{bandgap}, we have
$$
\begin{pmatrix}
\varphi^\alpha 
\\[0.3em]
\psi^\alpha
\end{pmatrix} 
= \mathcal{A}^\alpha(\omega,\delta)^{-1}\begin{pmatrix}
- f
\\[0.3em]
- g
\end{pmatrix}.
$$
The solution $u$ to problem \eqnref{eq:scattering_fictitious} can be recovered by the inversion formula as
\begin{equation*}
u(x)=\frac{1}{2\pi}\int_{Y^*} u^{(\alpha_1,\alpha_2)}(x) \dx\alpha_2.
\end{equation*}
Now, by the same arguments as those in \cite{defectSIAM,defectX}, we obtain the following proposition.
\begin{prop} \label{prop:floquet}
	The density pair $(\varphi,\psi)$ and the effective source pair $(f,g)$ satisfy
	\begin{equation}\label{eq:IntAa}
	\begin{pmatrix}
	\varphi
	\\[0.3em]
	\psi 
	\end{pmatrix}
	=\left(\frac{1}{2\pi}\int_{Y^*} \mathcal{A}^{(\alpha_1,\alpha_2)}(\omega,\delta)^{-1} \dx\alpha_2 \right)\begin{pmatrix}
	-f
	\\[0.3em]
	-g 
	\end{pmatrix},
	\end{equation}
	for small enough $\delta$ and for $\omega \notin \Lambda_{0,\alpha_1}$ inside the bandgap.
\end{prop}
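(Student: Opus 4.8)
\emph{Proof strategy.} The plan is to compare, inside the central cell $Y^2$, the two available representations of the solution $\widetilde u$ of \eqnref{eq:scattering_fictitious}: the \emph{a priori} representation \eqnref{eq:u_rep_effective}, in terms of the non-quasi-periodic single layer potentials $\S_D^{k_b},\S_D^{k_w}$ and the regular part $H$, and the representation obtained by averaging the Floquet components $u^{(\alpha_1,\alpha_2)}$ over $\alpha_2\in Y^*$. First I would record the consequences of the hypotheses: since $\omega\notin\Lambda_{0,\alpha_1}$ lies in the subwavelength bandgap, the operator $\mathcal A^{(\alpha_1,\alpha_2)}(\omega,\delta)$ is invertible for \emph{every} $\alpha_2\in Y^*$ --- this is precisely the content of $\omega\notin\Lambda_{0,\alpha_1}$ together with the invertibility statement from \cite{bandgap} recalled in Section \ref{subsec:bandgap} --- so the integrand in \eqnref{eq:IntAa} is well defined; moreover $\alpha_2\mapsto\mathcal A^{(\alpha_1,\alpha_2)}(\omega,\delta)^{-1}$ is continuous on the compact set $Y^*$, so the integral converges in operator norm, the densities $\varphi^{(\alpha_1,\alpha_2)},\psi^{(\alpha_1,\alpha_2)}$ depend continuously on $\alpha_2$, and (using the exponential decay of $\widetilde u$ in $x_2$ inside the bandgap) the inverse Floquet formula $\widetilde u(x)=\frac{1}{2\pi}\int_{Y^*}u^{(\alpha_1,\alpha_2)}(x)\dx\alpha_2$ holds.

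The key step is the splitting of the quasi-periodic single layer potential into its singular and regular parts. Writing $\Gamma^{\alpha,k_w}(x,y)=\Gamma^{k_w}(x,y)+G^{\alpha,k_w}(x,y)$, one has for $x\in Y^2\setminus\overline D$
\begin{equation*}
\S_D^{\alpha,k_w}[\psi^\alpha](x)=\S_D^{k_w}[\psi^\alpha](x)+\int_{\p D}G^{\alpha,k_w}(x,y)\psi^\alpha(y)\dx\sigma(y),
\end{equation*}
and, because $\overline D$ lies in the interior of $Y^2$, the singularities of $G^{\alpha,k_w}(\cdot,y)$ for $y\in\p D$ stay at a positive distance from $\overline{Y^2}$, so the last term is a smooth solution of $(\Delta+k_w^2)\,\cdot=0$ in a neighbourhood of $\overline{Y^2}$. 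Integrating the representation of $u^\alpha$ over $\alpha_2$, interchanging the $\alpha_2$-integral with the bounded, $\alpha_2$-independent operators $\S_D^{k_b}$ and $\S_D^{k_w}$ by Fubini, and setting
\begin{equation*}
\varphi:=\frac{1}{2\pi}\int_{Y^*}\varphi^{(\alpha_1,\alpha_2)}\dx\alpha_2,\qquad\psi:=\frac{1}{2\pi}\int_{Y^*}\psi^{(\alpha_1,\alpha_2)}\dx\alpha_2,
\end{equation*}
one obtains $\widetilde u=\S_D^{k_b}[\varphi]$ in $D$ and $\widetilde u=H+\S_D^{k_w}[\psi]$ in $Y^2\setminus\overline D$, where $H:=\frac{1}{2\pi}\int_{Y^*}\int_{\p D}G^{(\alpha_1,\alpha_2),k_w}(\cdot,y)\psi^{(\alpha_1,\alpha_2)}(y)\dx\sigma(y)\dx\alpha_2$ solves $(\Delta+k_w^2)H=0$ in $Y^2$.

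It then remains to identify this pair $(\varphi,\psi)$ with the one in \eqnref{eq:u_rep_effective}, i.e.\ to argue that the representation \eqnref{eq:u_rep_effective} is unique. For the interior part this is the injectivity of $\S_D^{k_b}$ on $L^2(\p D)$, valid since $k_w^2$ is below the first Dirichlet eigenvalue of $-\Delta$ on $D$ in the subwavelength regime. For the exterior part, if $H_1+\S_D^{k_w}[\psi_1]=H_2+\S_D^{k_w}[\psi_2]$ in $Y^2\setminus\overline D$ with $H_1,H_2$ homogeneous Helmholtz solutions, then $w:=\S_D^{k_w}[\psi_1-\psi_2]-(H_1-H_2)$, extended across $\p D$, vanishes in $Y^2\setminus\overline D$, solves the Helmholtz equation in $D$, hence has zero Dirichlet trace on $\p D$ and therefore vanishes in $D$ as well; the jump relation for $\frac{\p}{\p\nu}\S_D^{k_w}$ then gives $\psi_1=\psi_2$ and consequently $H_1=H_2$. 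Combining this with the identity $\begin{pmatrix}\varphi^{(\alpha_1,\alpha_2)}\\\psi^{(\alpha_1,\alpha_2)}\end{pmatrix}=\mathcal A^{(\alpha_1,\alpha_2)}(\omega,\delta)^{-1}\begin{pmatrix}-f\\-g\end{pmatrix}$ established in the previous subsection yields \eqnref{eq:IntAa}. The only place that genuinely needs care is this splitting-and-matching argument --- one must verify that the non-local contribution $H$ produced by the $\alpha_2$-average of $G^{\alpha,k_w}$ really is the abstract regular part appearing in \eqnref{eq:u_rep_effective}; the convergence of the $\alpha_2$-integral and the Fubini interchange are then routine given the bandgap hypothesis.
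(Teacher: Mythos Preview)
Your proposal is correct and follows precisely the route the paper has in mind: the paper does not spell out a proof here but defers it with ``by the same arguments as those in \cite{defectSIAM,defectX}'', and what you have written is exactly that argument --- split $\Gamma^{\alpha,k_w}=\Gamma^{k_w}+G^{\alpha,k_w}$, average over $\alpha_2$, interchange the bounded $\alpha_2$-independent layer operators with the integral, and identify the result with the representation \eqnref{eq:u_rep_effective} by uniqueness. Two small cosmetic points: in the interior uniqueness step you wrote ``$k_w^2$ is below the first Dirichlet eigenvalue'' where $k_b^2$ is meant (harmless under Assumption~\ref{assumption1} since $k_b=k_w=\omega$), and in the exterior uniqueness step the sign in $w:=\S_D^{k_w}[\psi_1-\psi_2]-(H_1-H_2)$ should be a plus so that $w$ actually vanishes in $Y^2\setminus\overline D$; the rest of that paragraph (continuity across $\partial D$, interior Dirichlet uniqueness, then the jump relation forcing $\psi_1=\psi_2$) is the right mechanism.
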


\subsection{The integral equation for the layer densities}
Here we state the integral equation for the layer density pair $(\phi,\psi)$. The following result is an immediate consequence of Propositions  \ref{prop:effective} and  \ref{prop:floquet}.
\begin{prop} \label{prop:fg}
	The layer density pair $(\phi,\psi)\in L^2(\partial D)^2$ satisfies the following integral equation:
	\begin{align} \label{eq:Mdensity}
	\mathcal{M}^{\epsilon,\delta,\alpha_1}(\omega)\begin{pmatrix}
	\phi
	\\
	\psi
	\end{pmatrix}:=\bigg(I+\left(\frac{1}{2\pi}\int_{Y^*}\A^\alpha(\omega,\delta)^{-1} \dx\alpha_2\right)(\A_{D}^\epsilon(\omega,\delta) - \A_{D}(\omega,\delta))\bigg)
	\begin{pmatrix}
	\phi
	\\[0.3em]
	\psi
	\end{pmatrix}
	=
	\begin{pmatrix}
	0
	\\[0.3em]
	0
	\end{pmatrix},
	\end{align}
	for small enough $\delta$ and for $\omega \notin \Lambda_{0,\alpha_1}$ inside the bandgap.
\end{prop}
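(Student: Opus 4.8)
The plan is to obtain \eqref{eq:Mdensity} simply by eliminating the effective source pair $(f,g)$ between the two relations already at our disposal. First I would invoke Proposition \ref{prop:effective}, which expresses $(f,g)$ through the layer densities,
\[
\begin{pmatrix} f \\ g \end{pmatrix} = \big(\mathcal{A}_D^\epsilon(\omega,\delta) - \mathcal{A}_D(\omega,\delta)\big)\begin{pmatrix} \varphi \\ \psi \end{pmatrix},
\]
where $\mathcal{A}_D^\epsilon$ is the bounded operator on $L^2(\p D)^2$ defined in \eqref{eq:ADd}; it is well defined precisely because the Fourier multipliers $\mathcal{P}_1$ and $\mathcal{P}_2$ are invertible. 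Next I would invoke Proposition \ref{prop:floquet}, which runs the other way,
\[
\begin{pmatrix} \varphi \\ \psi \end{pmatrix} = -\left(\frac{1}{2\pi}\int_{Y^*} \mathcal{A}^{(\alpha_1,\alpha_2)}(\omega,\delta)^{-1} \dx\alpha_2\right)\begin{pmatrix} f \\ g \end{pmatrix}.
\]
Substituting the first identity into the second and collecting the unknowns on the left gives
\[
\left(I + \left(\frac{1}{2\pi}\int_{Y^*} \mathcal{A}^{(\alpha_1,\alpha_2)}(\omega,\delta)^{-1} \dx\alpha_2\right)\big(\mathcal{A}_D^\epsilon(\omega,\delta) - \mathcal{A}_D(\omega,\delta)\big)\right)\begin{pmatrix} \varphi \\ \psi \end{pmatrix} = \begin{pmatrix} 0 \\ 0 \end{pmatrix},
\]
which is exactly $\mathcal{M}^{\epsilon,\delta,\alpha_1}(\omega)(\varphi,\psi)^{\mathrm{T}}=0$ once one makes the purely notational identification $\phi=\varphi$ with the density appearing in the statement.

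The remaining task would be to pin down the range of validity. Proposition \ref{prop:effective} needs nothing beyond the standing assumptions, so the constraints come entirely from Proposition \ref{prop:floquet}: one needs $\delta$ small and $\omega\notin\Lambda_{0,\alpha_1}$ inside the bandgap in order for $\mathcal{A}^{(\alpha_1,\alpha_2)}(\omega,\delta)$ to be invertible for every $\alpha_2\in Y^*$ (by \cite{bandgap}) and for its inverse to depend continuously — indeed analytically — on $\alpha_2$ over the compact torus $Y^*$, so that the average $\frac{1}{2\pi}\int_{Y^*}\mathcal{A}^\alpha(\omega,\delta)^{-1}\dx\alpha_2$ exists as a bounded operator on $L^2(\p D)^2$. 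These are precisely the hypotheses recorded in the proposition.

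I do not anticipate a genuine obstacle: the analytic content has already been absorbed into Propositions \ref{prop:effective} and \ref{prop:floquet}, so the proof is the composition of two linear relations plus the bookkeeping that every operator involved maps $L^2(\p D)^2$ to itself. The one point I would spell out is the identification $\phi=\varphi$; optionally I would also note the converse direction — that any nontrivial $(\phi,\psi)$ in the kernel of $\mathcal{M}^{\epsilon,\delta,\alpha_1}(\omega)$ reconstructs, via \eqref{eq:u_rep_effective} together with Proposition \ref{prop:effective}, a nontrivial solution of \eqref{eq:scattering-strip} — since it is in that form that this integral equation is used in Section \ref{sec-3}.
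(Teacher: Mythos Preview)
Your proposal is correct and matches the paper's own treatment: the paper states that Proposition~\ref{prop:fg} is an immediate consequence of Propositions~\ref{prop:effective} and~\ref{prop:floquet}, which is precisely the elimination of $(f,g)$ you carry out. Your additional remarks on the notational identification $\phi=\varphi$ and on the hypotheses ensuring invertibility of $\mathcal{A}^{(\alpha_1,\alpha_2)}$ are accurate and make explicit what the paper leaves implicit.
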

The expression of this integral equation resembles the one for a point defect found in \cite{defectSIAM,defectX}. However, this similarity is not obvious, and can be seen as a consequence of the cancellation of $H$ in Proposition \ref{prop:effective}.

The significance of Proposition \ref{prop:fg} is as follows. If we can show that there is a characteristic value $\omega = \omega^\epsilon$ of $\mathcal{M}^{\epsilon,\delta,\alpha_1}$ inside the bandgap, \ie{} if there is a non-trivial pair $(\phi,\psi)$ such that $\mathcal{M}^{\epsilon,\delta,\alpha_1}(\omega^\epsilon) \left( \begin{smallmatrix}
\phi \\ \psi\end{smallmatrix} \right) = 0$, then $\omega^\epsilon$ is a resonance frequency for the defect mode. 

\section{Subwavelength guided modes in the defect} \label{sec-3}
Here, we will prove the existence of a resonance frequency $\omega = \omega^\epsilon(\alpha_1)$ inside the bandgap of the unperturbed crystal at $\alpha_1$. We will give an asymptotic formula for $\mathcal{M}^{\epsilon,\delta,\alpha_1}$ in terms of $\delta$ in the dilute regime. Moreover, we will show that the defect band is not contained in the pure point spectrum of the defect operator, and for perturbation sizes $\epsilon$ larger than some critical $\epsilon_0$, the entire defect band is located in the bandgap region of the original operator.

\subsection{Asymptotic expansions for small $\delta$}
In this section, we will asymptotically expand $\mathcal{M}^{\epsilon,\delta,\alpha_1}$ in the limit as $\delta \rightarrow 0$ and with $\omega$ in the subwavelength regime, \ie{}, $\omega = O(\sqrt{\delta})$. Throughout this section we assume that $\alpha \neq (0,0)$. We  begin by studying the operator $(\A^\alpha(\omega, \delta))^{-1}$.

Define $\psi_{\alpha}$ as 
$$\psi_{\alpha} = \left(\S_D^{\alpha,0}\right)^{-1} [\chi_{\p D}].$$
Since we know that $\big(\frac{1}{2}I + (\K_D^{-\alpha,0})^*\big)[\psi_\alpha] = \psi_\alpha$, we can decompose this operator as
$$ \frac{1}{2}I + (\K_D^{-\alpha,0})^* = P_\alpha +  Q_\alpha,$$
where
$$P_\alpha = -\frac{\langle\chi_{\p D}, \cdot\rangle}{\capacity_{D,\alpha}}\psi_\alpha$$
is a projection on $\psi_\alpha$. Then it can be shown that $Q_\alpha[\psi_\alpha] = 0$ and $Q_\alpha^* [\chi_{\p D}] = 0$, where $Q_\alpha^*$ is the adjoint of $Q_\alpha$.

For small $\delta$ and for $\omega=O(\sqrt{\delta})$ inside the corresponding bandgap, the operator $\A^\alpha(\omega, \delta)$ can be decomposed as 
$$\A^\alpha(\omega, \delta) = \begin{pmatrix}
\mathcal{S}_D^{\omega} &  -\mathcal{S}_D^{\alpha,\omega}  \\
-\frac{1}{2}I+ \mathcal{K}_D^{\omega, *}& 0 
\end{pmatrix} - \delta \begin{pmatrix}
0 & 0  \\
0 & P_\alpha
\end{pmatrix} - \delta \begin{pmatrix}
0 & 0  \\
0 & Q_\alpha
\end{pmatrix} + O(\delta^3).$$
Define the operators
$$A_0 = \begin{pmatrix}
\mathcal{S}_D^{\omega} &  -\mathcal{S}_D^{\alpha,\omega}  \\
-\frac{1}{2}I+ \mathcal{K}_D^{\omega, *}& 0 
\end{pmatrix} , $$
and
$$A_1 = I - \delta A_0^{-1} \begin{pmatrix}
0 & 0  \\
0 & P_\alpha
\end{pmatrix}.$$
The motivation for defining these operators is given in Lemmas \ref{lem:invA0A1} and \ref{lem:invAa}. Introducing these operators enables the explicit computation of $(A^\alpha)^{-1}$. We will compute the asymptotic expansion of these operators for small $\omega$ and $\delta$.

\begin{lem} \label{lem:invA0A1}
	The following results hold for $A_0$ and $A_1$:
	\begin{itemize}
		\item[(i)] For $\omega \neq 0$, $A_0: L^2(\p D) \rightarrow L^2(\p D)$ is invertible, and as $\omega \rightarrow 0$ and $\delta \rightarrow 0$,
$$A_0^{-1} = \begin{pmatrix}
0 &  -\frac{\langle \chi_{\p D}, \cdot \rangle}{\pi R^3\omega^2\ln \omega} \chi_{\p D} +O\left(\frac{1}{\omega\ln\omega}\right) \\
-\left(\mathcal{S}_D^{\alpha,0}\right)^{-1} + O(\omega^2) & -\frac{\langle \chi_{\p D}, \cdot \rangle}{\pi R^2\omega^2}\psi_\alpha +O\left(\frac{1}{\omega}\right)
\end{pmatrix}.$$
		
		\item[(ii)]  For $\omega \neq \omega^\alpha$, $A_1: L^2(\p D) \rightarrow L^2(\p D)$ is invertible, and as $\omega \rightarrow 0$ and $\delta \rightarrow 0$,
$$A_1^{-1} = \begin{pmatrix}
I & -\frac{(\omega^\alpha)^2}{\omega^2R\ln \omega} \frac{\left\langle \chi_{\p D}, \left(P_\alpha^\perp\right)^{-1}[\cdot]  \right\rangle}{\capacity_{D,\alpha}}\chi_{\p D} + O\left(\frac{\omega}{\ln\omega}\right)\\
0 & \left(P_\alpha^\perp\right)^{-1} +O(\omega)
\end{pmatrix},$$
where $P_\alpha^\perp = I-\frac{(\omega^\alpha)^2}{\omega^2}P_\alpha$.
	\end{itemize}
\end{lem}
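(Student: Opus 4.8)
The plan is to compute the two inverses by exploiting the block-triangular structure that has been carefully engineered into $A_0$ and $A_1$, together with the known small-$\omega$ expansions of the scalar layer potentials on a disk. I would begin with part (i). Writing $A_0 = \bigl(\begin{smallmatrix} \S_D^\omega & -\S_D^{\alpha,\omega} \\ -\frac12 I + \K_D^{\omega,*} & 0\end{smallmatrix}\bigr)$, the $(2,2)$-entry is zero, so $A_0$ is invertible precisely when both off-diagonal blocks are, and the inverse has the block form $\bigl(\begin{smallmatrix} 0 & (-\frac12 I + \K_D^{\omega,*})^{-1} \\ -(\S_D^{\alpha,\omega})^{-1} & (\S_D^{\alpha,\omega})^{-1}\S_D^\omega(-\frac12 I + \K_D^{\omega,*})^{-1}\end{smallmatrix}\bigr)$. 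Thus the whole computation reduces to: (a) inverting $-\frac12 I + \K_D^{\omega,*}$ for small $\omega$; (b) inverting $\S_D^{\alpha,\omega}$ for small $\omega$; (c) multiplying the pieces together and tracking the leading order. For (a), on a disk of radius $R$ the operator $-\frac12 I + \K_D^{0,*}$ annihilates $\chi_{\p D}$, and the standard small-$\omega$ expansion $\S_D^\omega = \hat\S_D^\omega + O(\omega^2\ln\omega)$ with $\hat\S_D^\omega = \S_D^0 + \eta_\omega\langle\chi_{\p D},\cdot\rangle$ (recalling $\eta_\omega = \frac{1}{2\pi}\ln\omega + O(1)$) shows that the obstruction to inverting $-\frac12 I + \K_D^{\omega,*}$ lives on the one-dimensional space spanned by $\chi_{\p D}$, with the relevant eigenvalue behaving like a constant times $\omega^2 R^2 \ln\omega$ (this is exactly the mechanism behind the Minnaert resonance; cf. Theorem~\ref{approx_thm}). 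Inverting on that space and noting $(-\frac12 I + \K_D^{\omega,*})^{-1}$ is $O(1)$ on the complement gives the $-\frac{\langle\chi_{\p D},\cdot\rangle}{\pi R^3\omega^2\ln\omega}\chi_{\p D} + O(1/(\omega\ln\omega))$ entry. For (b), $\S_D^{\alpha,\omega} = \S_D^{\alpha,0} + O(\omega^2)$ with $\S_D^{\alpha,0}$ invertible for $\alpha\neq 0$, giving the $-(\S_D^{\alpha,0})^{-1} + O(\omega^2)$ entry directly. The $(2,2)$-entry of $A_0^{-1}$ is then $(\S_D^{\alpha,0})^{-1}\S_D^\omega$ applied to the leading term of (a); since $\S_D^\omega[\chi_{\p D}] \approx \eta_\omega|\p D| = \eta_\omega 2\pi R$ plus lower order, and $(\S_D^{\alpha,0})^{-1}[\chi_{\p D}] = \psi_\alpha$, one gets $\frac{\eta_\omega 2\pi R}{\pi R^3 \omega^2\ln\omega}$ times $\langle\chi_{\p D},\cdot\rangle\psi_\alpha$, and using $\eta_\omega\sim\frac{1}{2\pi}\ln\omega$ this collapses to $\frac{1}{\pi R^2\omega^2}$ — up to the sign bookkeeping, this is the claimed $-\frac{\langle\chi_{\p D},\cdot\rangle}{\pi R^2\omega^2}\psi_\alpha + O(1/\omega)$ term.

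For part (ii), I would use the Sherman–Morrison-type structure: $A_1 = I - \delta A_0^{-1}\bigl(\begin{smallmatrix}0&0\\0&P_\alpha\end{smallmatrix}\bigr)$, and since $P_\alpha = -\frac{\langle\chi_{\p D},\cdot\rangle}{\capacity_{D,\alpha}}\psi_\alpha$ is rank one, the correction $\delta A_0^{-1}\bigl(\begin{smallmatrix}0&0\\0&P_\alpha\end{smallmatrix}\bigr)$ is built from the second column of $A_0^{-1}$ composed with $P_\alpha$. Feeding in the expansion from (i), the dominant contribution is $\delta$ times the $(2,2)$-block $-\frac{\langle\chi_{\p D},\cdot\rangle}{\pi R^2\omega^2}\psi_\alpha$ composed with $P_\alpha$, which, using $\langle\chi_{\p D},\psi_\alpha\rangle = -\capacity_{D,\alpha}$ and the definition $\capacity_{D,\alpha} = \delta|D|(\omega^\alpha)^2/\delta = |D|(\omega^\alpha)^2$ rearranged via Theorem~\ref{approx_thm} as $\delta\,\capacity_{D,\alpha}/|D| = (\omega^\alpha)^2$, produces exactly $\frac{(\omega^\alpha)^2}{\omega^2}P_\alpha$ in the $(2,2)$-block, hence $A_1 = \bigl(\begin{smallmatrix} I & * \\ 0 & I - \frac{(\omega^\alpha)^2}{\omega^2}P_\alpha + O(\omega)\end{smallmatrix}\bigr) = \bigl(\begin{smallmatrix}I&*\\0&P_\alpha^\perp + O(\omega)\end{smallmatrix}\bigr)$ with the off-diagonal $(1,2)$-block coming from the first column of $A_0^{-1}$ (entry $-\frac{\langle\chi_{\p D},\cdot\rangle}{\pi R^3\omega^2\ln\omega}\chi_{\p D}$) composed with $P_\alpha$. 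Since $A_1$ is upper block-triangular with identity in the $(1,1)$-slot, it is invertible iff $P_\alpha^\perp = I - \frac{(\omega^\alpha)^2}{\omega^2}P_\alpha$ is, i.e. iff $\omega\neq\omega^\alpha$ (the only way the rank-one perturbation of the identity degenerates), and $A_1^{-1} = \bigl(\begin{smallmatrix} I & -(\text{$(1,2)$-block})(P_\alpha^\perp)^{-1} \\ 0 & (P_\alpha^\perp)^{-1}\end{smallmatrix}\bigr)$; substituting the $(1,2)$-block and the value $\langle\chi_{\p D},(P_\alpha^\perp)^{-1}[\cdot]\rangle$ that appears when one pushes $(P_\alpha^\perp)^{-1}$ through gives the stated formula, with the $\frac{(\omega^\alpha)^2}{\omega^2 R\ln\omega}$ prefactor emerging from combining $\frac{1}{\pi R^3\omega^2\ln\omega}$ with the capacitance normalization.

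The main obstacle I anticipate is not the block-matrix algebra — which is essentially forced — but keeping the error orders consistent across the two-step process: the expansion of $\S_D^\omega$ carries both an $O(\omega^2\ln\omega)$ correction to the $\hat\S_D^\omega$ approximation and the intrinsic $\ln\omega$ growth in $\eta_\omega$, so when these are inverted and multiplied, one must check carefully that the subleading terms genuinely collapse to $O(1/(\omega\ln\omega))$, $O(1/\omega)$, $O(\omega)$ and $O(\omega/\ln\omega)$ respectively and that no cross term of intermediate order (e.g. $O(1/\log^2\omega)$ masquerading as larger) has been dropped. A secondary technical point is justifying that $-\frac12 I + \K_D^{\omega,*}$ is genuinely invertible for all small $\omega\neq 0$ (not merely formally invertible on the symbol level) — this follows from analytic Fredholm theory once one knows $\omega=0$ is an isolated characteristic value, exactly as in the cited bandgap analysis — and similarly that the only characteristic value of $A_1$ near $0$ is $\omega^\alpha$, which is where the identification of $\omega^\alpha$ with the Bloch eigenvalue from Theorem~\ref{approx_thm} (rather than merely "some zero of $P_\alpha^\perp$") gets pinned down. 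Once the orders are controlled, both formulas drop out by direct substitution.
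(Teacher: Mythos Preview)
Your proposal is correct and follows essentially the same approach as the paper: the same block-triangular inversion formula for $A_0^{-1}$, the same Neumann-series expansion of $(\S_D^{\alpha,\omega})^{-1}$, and the same upper-triangular inversion of $A_1$ via $P_\alpha^\perp$. The only cosmetic difference is that the paper computes $\bigl(-\tfrac12 I + \K_D^{\omega,*}\bigr)^{-1}$ by writing it explicitly in the Fourier basis on the disk (using the Bessel/Hankel representation) rather than via the abstract ``one-dimensional obstruction on $\mathrm{span}\{\chi_{\partial D}\}$'' argument you sketch, but the content is identical.
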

\noindent \textit{Proof of (i).}
We easily find that 
\begin{equation}\label{eq:Alem1}A_0^{-1} = \begin{pmatrix}
0 &  \left(-\frac{1}{2}I+ \mathcal{K}_D^{\omega, *}\right)^{-1} \\
-\left(\mathcal{S}_D^{\alpha,\omega}\right)^{-1} & \left(\mathcal{S}_D^{\alpha,\omega}\right)^{-1} \mathcal{S}_D^{\omega} \left(-\frac{1}{2}I+ \mathcal{K}_D^{\omega, *}\right)^{-1}
\end{pmatrix},\end{equation}
which is well-defined since $-\frac{1}{2}I+ \mathcal{K}_D^{\omega, *}: L^2(\p D) \rightarrow L^2(\p D)$ is invertible for $\omega\neq0$ \cite{MaCMiPaP}. From the low-frequency expansion of $\mathcal{S}_D^{\alpha,\omega}$ \cite{MaCMiPaP}, and using the Neumann series, we have 
\begin{align} \label{eq:Alem2}
\left(\mathcal{S}_D^{\alpha,\omega}\right)^{-1} &= \left(\mathcal{S}_D^{\alpha,0}+O(\omega^2)\right)^{-1} \nonumber \\
&=\left(\mathcal{S}_D^{\alpha,0}\right)^{-1} +O(\omega^2).
\end{align}
Using the Fourier basis, the operator $-\frac{1}{2}I+ \mathcal{K}_D^{\omega,*}$ can be represented as \cite{bandgap}
$$\left(-\frac{1}{2}I+ \mathcal{K}_D^{\omega, *}\right)_{V_{m}\rightarrow V_{n}} = \delta_{mn}\left(-\frac{1}{2} + \frac{-i\pi R \omega}{4}\left(H_n^{(1)}(\omega R)J_n'(\omega R) + (H_n^{(1)})'(\omega R)J_n(\omega R)\right) \right).
$$
Using standard asymptotics we can compute
\begin{align*}
\left(-\frac{1}{2}I+ \mathcal{K}_D^{\omega, *}\right)_{V_{n}\rightarrow V_{n}} = \begin{cases} -\frac{R^2}{2}\omega^2\left(2\pi\eta_\omega + \ln R\right) + O(\omega^3\ln\omega) \quad &n=0 ,\\ -\frac{1}{2} + O(\omega) & n\neq0  .\end{cases} 
\end{align*}
Hence the operator $\left(-\frac{1}{2}I+ \mathcal{K}_D^{\omega, *}\right)^{-1}$ can be written as
\begin{equation}\label{eq:Alem3}\left(-\frac{1}{2}I+ \mathcal{K}_D^{\omega, *}\right)^{-1} = -\frac{1}{\pi R^3\omega^2\left(2\pi\eta_\omega + \ln R\right)}\langle \chi_{\p D}, \cdot \rangle \chi_{\p D} + O\left(\frac{1}{\omega\ln\omega}\right).\end{equation}
Moreover, we have from \eqnref{eq:Sexpansion} that $\mathcal{S}_D^{\omega}[\chi_{\p D}] = (2\pi R\eta_\omega+ R\ln R)\chi_{\p D} + O(\omega^2\ln\omega)$, and so 
\begin{equation}\label{eq:Alem4}\left(\mathcal{S}_D^{\alpha,\omega}\right)^{-1} \mathcal{S}_D^{\omega} \left(-\frac{1}{2}I+ \mathcal{K}_D^{\omega, *}\right)^{-1} = -\frac{\langle \chi_{\p D}, \cdot \rangle}{\pi R^2\omega^2}\psi_\alpha +O\left(\frac{1}{\omega}\right).\end{equation}
Combining equations \eqnref{eq:Alem1}, \eqnref{eq:Alem2}, \eqnref{eq:Alem3} and \eqnref{eq:Alem4} proves (i).
\qed
\bigskip

\noindent \textit{Proof of (ii).}
Using the definition of $A_1$, and the expression for $A_0$, we can compute
$$A_1 = I - \delta \begin{pmatrix}
0 &  -\frac{\langle \chi_{\p D}, \cdot \rangle}{\pi R^3\omega^2\ln \omega} \chi_{\p D} +O\left(\frac{1}{\omega\ln\omega}\right) \\
0 & -\frac{\langle \chi_{\p D}, \cdot \rangle}{\pi R^2\omega^2}\psi_\alpha +O\left(\frac{1}{\omega}\right)
\end{pmatrix}.$$
Recall the asymptotic expression of $\omega^\alpha$ given in Theorem \ref{approx_thm}:
\begin{equation}\label{eq:delta}
\omega^\alpha = \sqrt{\frac{\delta\capacity_{D,\alpha}}{\pi R^2}} +O(\delta^{3/2}).
\end{equation}
We then find that
$$A_1 = \begin{pmatrix}
I &  \frac{(\omega^\alpha)^2}{\omega^2R\ln \omega} \frac{\langle \chi_{\p D}, \cdot \rangle}{\capacity_{D,\alpha}}\chi_{\p D} +O\left(\frac{\omega}{\ln\omega}\right) \\
0 & I-\frac{(\omega^\alpha)^2}{\omega^2}P_\alpha +O(\omega)
\end{pmatrix}.$$
Define $P_\alpha^\perp = I-\frac{(\omega^\alpha)^2}{\omega^2}P_\alpha$. For $\omega$ small enough, $A_1$ is invertible precisely when $P_\alpha^\perp$ is invertible, \ie{} when $\omega\neq \omega^\alpha$. Moreover, we have
$$A_1^{-1} = \begin{pmatrix}
I & -\frac{(\omega^\alpha)^2}{\omega^2R\ln \omega} \frac{\left\langle \chi_{\p D}, \left(P_\alpha^\perp\right)^{-1}[\cdot]  \right\rangle}{\capacity_{D,\alpha}}\chi_{\p D} + O\left(\frac{\omega}{\ln\omega}\right)\\
0 & \left(P_\alpha^\perp\right)^{-1} +O(\omega)
\end{pmatrix}.$$
This proves (ii).
\qed
\begin{lem} \label{lem:invAa}
	For $\omega \neq \omega^\alpha$, and as $\omega \rightarrow 0$ and $\delta \rightarrow 0$, we have
	\begin{align*}
	(\A^\alpha(\omega, \delta))^{-1} &= A_1^{-1}A_0^{-1}\big(I + O(\delta)\big).
	\end{align*}
\end{lem}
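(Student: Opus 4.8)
\medskip
\noindent\emph{Proof proposal.}\quad
The plan is to exploit the factorization of $\A^\alpha(\omega,\delta)$ that is encoded in the definitions of $A_0$ and $A_1$, and then to reduce the statement to a single operator-norm bound handled by the cancellation identities for $Q_\alpha$. Combining the decomposition of $\A^\alpha(\omega,\delta)$ displayed above with the definition $A_1 = I - \delta A_0^{-1}\left(\begin{smallmatrix} 0 & 0 \\ 0 & P_\alpha\end{smallmatrix}\right)$, one gets $A_0 A_1 = A_0 - \delta\left(\begin{smallmatrix} 0 & 0 \\ 0 & P_\alpha\end{smallmatrix}\right)$ and hence
\begin{equation*}
\A^\alpha(\omega,\delta) = A_0 A_1 + E, \qquad E := -\delta\begin{pmatrix} 0 & 0 \\ 0 & Q_\alpha \end{pmatrix} + O(\delta^3).
\end{equation*}
Since $A_0$ is invertible for $\omega\neq 0$ and $A_1$ for $\omega\neq\omega^\alpha$ (Lemma~\ref{lem:invA0A1}), we factor $\A^\alpha = \bigl(I + E A_1^{-1}A_0^{-1}\bigr)A_0 A_1$, so that $(\A^\alpha(\omega,\delta))^{-1} = A_1^{-1}A_0^{-1}\bigl(I + E A_1^{-1}A_0^{-1}\bigr)^{-1}$. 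By a Neumann series the claim then reduces to showing $E A_1^{-1}A_0^{-1} = O(\delta)$ in operator norm, as $\omega\to 0$ and $\delta\to 0$ with $\omega$ of order $\sqrt\delta$.

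The crux is the estimate $\left(\begin{smallmatrix} 0 & 0 \\ 0 & Q_\alpha\end{smallmatrix}\right)A_1^{-1}A_0^{-1} = O(1)$. First, since $P_\alpha^\perp = I - \tfrac{(\omega^\alpha)^2}{\omega^2}P_\alpha$ acts as the identity off the range of $P_\alpha$ and $Q_\alpha P_\alpha = 0$ (the range of $P_\alpha$ is $\mathrm{span}\{\psi_\alpha\}$ and $Q_\alpha[\psi_\alpha]=0$), we have $Q_\alpha (P_\alpha^\perp)^{-1} = Q_\alpha$; with the block form of $A_1^{-1}$ from Lemma~\ref{lem:invA0A1}(ii) this gives $\left(\begin{smallmatrix} 0 & 0 \\ 0 & Q_\alpha\end{smallmatrix}\right)A_1^{-1} = \left(\begin{smallmatrix} 0 & 0 \\ 0 & Q_\alpha\end{smallmatrix}\right) + O(\omega)$. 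It then remains to control $Q_\alpha$ applied to the bottom row $\bigl(-(\S_D^{\alpha,\omega})^{-1}+O(\omega^2),\ (\S_D^{\alpha,\omega})^{-1}\S_D^{\omega}(-\tfrac12 I + \K_D^{\omega,*})^{-1}\bigr)$ of $A_0^{-1}$ recorded in \eqnref{eq:Alem1}. The first entry is $O(1)$ because $Q_\alpha$ and $(\S_D^{\alpha,0})^{-1}$ are bounded. For the second entry one sharpens the computation behind Lemma~\ref{lem:invA0A1}(i): the $\omega\to0$ blow-up of both $(-\tfrac12 I + \K_D^{\omega,*})^{-1}$ and $\S_D^{\omega}$ is confined to the constant Fourier mode $\chi_{\p D}$ (both are uniformly bounded on its orthogonal complement), so the unbounded part of $(\S_D^{\alpha,\omega})^{-1}\S_D^{\omega}(-\tfrac12 I + \K_D^{\omega,*})^{-1}$ is a rank-one operator with fixed functional $\langle\chi_{\p D},\,\cdot\,\rangle$ and range vector proportional to $(\S_D^{\alpha,\omega})^{-1}[\chi_{\p D}] = \psi_\alpha + O(\omega^2)$, the remainder being $O(1)$. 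Applying $Q_\alpha$ and using $Q_\alpha[\psi_\alpha] = 0$ annihilates this rank-one singular part up to an $O(1)$ remainder; the $O(\omega)$ tail from the previous display is handled the same way. This yields $\left(\begin{smallmatrix} 0 & 0 \\ 0 & Q_\alpha\end{smallmatrix}\right)A_1^{-1}A_0^{-1} = O(1)$.

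To conclude, $A_1^{-1}$ is bounded for $\omega$ away from $\omega^\alpha$ while $A_0^{-1} = O(\omega^{-2})$ by Lemma~\ref{lem:invA0A1}(i), so $A_1^{-1}A_0^{-1} = O(\delta^{-1})$ since $\omega$ is of order $\sqrt\delta$; hence the $O(\delta^3)$ part of $E$ contributes only $O(\delta^2)$, while the main part contributes $-\delta\cdot O(1) = O(\delta)$ by the previous paragraph. Thus $E A_1^{-1}A_0^{-1} = O(\delta)$, so $(I + E A_1^{-1}A_0^{-1})^{-1} = I + O(\delta)$, and $(\A^\alpha(\omega,\delta))^{-1} = A_1^{-1}A_0^{-1}(I + O(\delta))$.

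The main obstacle is precisely the sharpening used above: Lemma~\ref{lem:invA0A1}(i) as stated controls the $(2,2)$-block of $A_0^{-1}$ only up to a generic $O(\omega^{-1})$ error, which after multiplication by $\delta$ would give merely $O(\sqrt\delta)$. One really has to isolate the structural fact that the $\omega\to0$ singularity of that block is rank-one with range $\mathrm{span}\{\psi_\alpha\}$ --- a consequence of the singularities of $\S_D^{\omega}$ and $(-\tfrac12 I + \K_D^{\omega,*})^{-1}$ living entirely on the constant Fourier mode --- so that $Q_\alpha[\psi_\alpha]=0$ can absorb it; cleanly separating the genuinely singular terms from the $O(1)$ ones, and tracking the (harmless, since $\psi_\alpha$-directional) $O(\omega)$ tails coming from $A_1^{-1}$, is where the real work lies.
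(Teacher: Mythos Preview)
Your argument is correct and follows essentially the same strategy as the paper: factor $\A^\alpha$ through $A_0A_1$, then show the residual $Q_\alpha$-term is $O(\delta)$ by a cancellation identity. The differences are minor and dual to one another. The paper factors on the left, writing $\A^\alpha = A_0A_1\bigl(I - \delta A_1^{-1}A_0^{-1}\left(\begin{smallmatrix}0&0\\0&Q_\alpha\end{smallmatrix}\right) + O(\delta^2)\bigr)$, and then invokes $Q_\alpha^*[\chi_{\partial D}]=0$ to kill the singular part of $A_0^{-1}\left(\begin{smallmatrix}0&0\\0&Q_\alpha\end{smallmatrix}\right)$ (the blow-up of the second column of $A_0^{-1}$ lives in the functional $\langle\chi_{\partial D},\,\cdot\,\rangle$). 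You factor on the right, $\A^\alpha = (I + E A_1^{-1}A_0^{-1})A_0A_1$, and use the dual identity $Q_\alpha[\psi_\alpha]=0$ to kill the singular part of $\left(\begin{smallmatrix}0&0\\0&Q_\alpha\end{smallmatrix}\right)A_1^{-1}A_0^{-1}$ (the blow-up of the bottom row of $A_0^{-1}$ lives in the range vector $\psi_\alpha$). Your side of the factorization has the small advantage that the $(I+O(\delta))$ correction lands directly on the right of $A_1^{-1}A_0^{-1}$, matching the lemma as stated. Your explicit discussion of why the nominal $O(1/\omega)$ remainder in Lemma~\ref{lem:invA0A1}(i) does not obstruct the estimate --- because the singularity is genuinely rank-one on the constant mode --- is a point the paper handles only implicitly through the one-line appeal to $Q_\alpha^*[\chi_{\partial D}]=0$.
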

\begin{proof}
	We have already established the invertibility of $A_0$ and $A_1$. Using this fact, we have
	\begin{align*}
	\A^\alpha(\omega, \delta) &= A_0 - \delta \begin{pmatrix}
	0 & 0  \\
	0 & P_\alpha
	\end{pmatrix} - \delta \begin{pmatrix}
	0 & 0  \\
	0 & Q_\alpha
	\end{pmatrix} + O(\delta^3) \\
	&= A_0\left( I - \delta A_0^{-1}\begin{pmatrix}
	0 & 0  \\
	0 & P_\alpha
	\end{pmatrix} - \delta A_0^{-1}\begin{pmatrix}
	0 & 0  \\
	0 & Q_\alpha
	\end{pmatrix} + O(\delta^2) \right)  \\ 
	&= A_0A_1\left( I - \delta A_1^{-1}A_0^{-1}\begin{pmatrix}
	0 & 0  \\
	0 & Q_\alpha
	\end{pmatrix}+ O(\delta^2)\right).
	\end{align*}
	Because $Q_\alpha^* \chi_{\p D} = 0$, we have that
	$$\delta A_1^{-1}A_0^{-1}\begin{pmatrix}
	0 & 0  \\
	0 & Q_\alpha
	\end{pmatrix} = O(\delta).$$
	We then have that
	\begin{align*}
	(\A^\alpha(\omega, \delta))^{-1} &= A_1^{-1}A_0^{-1}\big(I + O(\delta)\big)^{-1} \\ &= A_1^{-1}A_0^{-1}\big(I + O(\delta)\big),
	\end{align*}
	where the last step follows using the Neumann series.
\end{proof}
Next, we compute the operator $(\mathcal{A}_D^\epsilon - \mathcal{A}_D)$. Using Proposition \ref{prop:effective} and equation (\ref{eq:ADd}), we have
\begin{equation*}
(\A_{D}^\epsilon)_{V_{mn}\rightarrow V_{m'n'}} = \delta_{mn}\delta_{m'n'} \frac{(-i)\pi R}{2} \begin{pmatrix} J_n(\omega R)H_n^{(1)}(\omega R) & -  J_n(\omega R)H_n^{(1)}(\omega R_d) \frac{J_n(\omega R) }{J_n(\omega R_d)}\\\omega J_n'(\omega R) H_n^{(1)}(\omega R) & - \delta \omega J_n(\omega R) \big(H_n^{(1)}\big)'(\omega R_d) \frac{J_n'(\omega R)}{J_n'(\omega R_d) }\end{pmatrix}.
\end{equation*}	
Consequently, the operator $(\mathcal{A}_D^\epsilon - \mathcal{A}_D)$ is given by
\begin{align*}
(\mathcal{A}_D^\epsilon - \mathcal{A}_D)_{V_{mn}\rightarrow V_{m'n'}} =& \delta_{mn}\delta_{m'n'} \frac{(-i)\pi RJ_n(\omega R)}{2} \begin{pmatrix} 0 & H_n^{(1)}(\omega R) - \frac{J_n(\omega R)H_n^{(1)}(\omega R_d)  }{J_n(\omega R_d)}\\0 & \delta \omega \left(\big(H_n^{(1)}\big)'(\omega R) - \frac{J_n'(\omega R)\big(H_n^{(1)}\big)'(\omega R_d)}{J_n'(\omega R_d)} \right)\end{pmatrix}.
\end{align*}	
Introduce the notation
\begin{equation}  \label{eq:Apert}
\mathcal{A}_D^\epsilon - \mathcal{A}_D := \begin{pmatrix} 0 & E_1^\epsilon \\0 & E_2^\epsilon\end{pmatrix}.
\end{equation}
Using asymptotic expansions of the Bessel function $J_n(z)$ and the Hankel function $H_n^{(1)}(z)$, for small $z$, straightforward computations show that
\begin{align*}
(E_1^\epsilon)_{V_{m}\rightarrow V_{n}} &= \delta_{m,n}\frac{(-i)\pi R}{2}\frac{J_n(\omega R)}{J_n(\omega R_d)}\left(H_n^{(1)}(\omega R){J_n(\omega R_d)} - J_n(\omega R)H_n^{(1)}(\omega R_d) \right), \\ &= \begin{cases} \ds \delta_{m,n}\left(R\ln\frac{R}{R_d} + O(\omega\ln\omega)\right), \qquad &n = 0 ,\\[0.5em] \ds \delta_{m,n} \left(-\frac{R}{2|n|}\left(1-\frac{R^{2|n|}}{R_d^{2|n|}}\right) + O(\omega)\right), &n\neq 0.\end{cases}
\end{align*}
Moreover, we have 
\begin{align} \label{eq:E2}
(E_2^\epsilon)_{V_{m}\rightarrow V_{n}} &= \delta_{m,n}\frac{(-i)\pi RJ_n(\omega R)}{2} \delta \omega \left(\big(H_n^{(1)}\big)'(\omega R) - \frac{J_n'(\omega R)\big(H_n^{(1)}\big)'(\omega R_d)}{J_n'(\omega R_d)} \right), \\ \nonumber &= \begin{cases} \ds \delta_{m,n}\left( \delta \left(1-\frac{R^2}{R_d^2} \right) + O(\delta\omega^2\ln\omega)\right), \qquad &n = 0, \\[0.5em]  \ds \delta_{m,n}O(\delta), &n\neq 0.\end{cases}
\end{align}

We are now ready to compute the full operator $\mathcal{M}^{\epsilon,\delta,\alpha_1}$. 
\begin{prop}
	The operator $\M^{\epsilon,\delta,\alpha_1}(\omega)$ has the form 
	\begin{equation} \label{eq:Mmatrix}
	\M^{\epsilon,\delta,\alpha_1}(\omega) = \begin{pmatrix}
	I &  M_1(\omega) \\
	0 & I + M_0(\omega)
	\end{pmatrix}, 
	\end{equation}
	where the operators $M_0(\omega), M_1(\omega): L^2(\p D) \rightarrow L^2(\p D)$ depend on $\epsilon,\delta,\alpha_1$. Moreover, as $\omega\rightarrow 0$, $\delta\rightarrow 0$ and $\omega \notin \Lambda_{0,\alpha_1}$ we have
	$$M_0(\omega) = -\frac{1}{2\pi}\int_{Y^*} \left( \left(P_\alpha^\perp\right)^{-1}(\S_{D}^{\alpha,0})^{-1}E_1^\epsilon + \delta \left(1-\frac{R^2}{R_d^2} \right)\frac{\langle \chi_{\p D}, \cdot \rangle}{\pi R^2\left(\omega^2-(\omega^\alpha)^2\right)}\psi_\alpha \right) \dx \alpha_2 +O(\omega).$$
\end{prop}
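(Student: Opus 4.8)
I would prove this in two steps: a structural step that produces the block-triangular form \eqref{eq:Mmatrix}, and an asymptotic step that computes $M_0(\omega)$. For the structural step, recall from Proposition \ref{prop:fg} that
$$
\M^{\epsilon,\delta,\alpha_1}(\omega)=I+\left(\frac{1}{2\pi}\int_{Y^*}\A^\alpha(\omega,\delta)^{-1}\dx\alpha_2\right)\bigl(\A_D^\epsilon-\A_D\bigr),
$$
while \eqref{eq:Apert} shows that $\A_D^\epsilon-\A_D$ has vanishing first block-column. Writing $\A^\alpha(\omega,\delta)^{-1}=\left(\begin{smallmatrix}B_{11}&B_{12}\\ B_{21}&B_{22}\end{smallmatrix}\right)$ in the Fourier block decomposition and multiplying out, the product $\A^\alpha(\omega,\delta)^{-1}(\A_D^\epsilon-\A_D)$ inherits a vanishing first block-column, so averaging in $\alpha_2$ and adding $I$ yields precisely \eqref{eq:Mmatrix} with
$$
M_1(\omega)=\frac{1}{2\pi}\int_{Y^*}\bigl(B_{11}E_1^\epsilon+B_{12}E_2^\epsilon\bigr)\dx\alpha_2,\qquad
M_0(\omega)=\frac{1}{2\pi}\int_{Y^*}\bigl(B_{21}E_1^\epsilon+B_{22}E_2^\epsilon\bigr)\dx\alpha_2.
$$

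For the asymptotic step I would expand the $(2,1)$ and $(2,2)$ blocks of $\A^\alpha(\omega,\delta)^{-1}$. By Lemma \ref{lem:invAa} we have $\A^\alpha(\omega,\delta)^{-1}=A_1^{-1}A_0^{-1}(I+O(\delta))$, so it is enough to multiply the leading terms of $A_0^{-1}$ and $A_1^{-1}$ from Lemma \ref{lem:invA0A1}. This gives $B_{21}=-\bigl(P_\alpha^\perp\bigr)^{-1}(\S_D^{\alpha,0})^{-1}+O(\omega)$ and $B_{22}=-\bigl(P_\alpha^\perp\bigr)^{-1}\tfrac{\langle\chi_{\p D},\cdot\rangle}{\pi R^2\omega^2}\psi_\alpha+O(1/\omega)$. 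The key algebraic simplification is that $\psi_\alpha$ is a unit eigenvector of $P_\alpha$: since $\langle\chi_{\p D},\psi_\alpha\rangle=\langle\chi_{\p D},(\S_D^{\alpha,0})^{-1}[\chi_{\p D}]\rangle=-\capacity_{D,\alpha}$ by Theorem \ref{approx_thm}, we have $P_\alpha[\psi_\alpha]=\psi_\alpha$, so $P_\alpha^\perp=I-\tfrac{(\omega^\alpha)^2}{\omega^2}P_\alpha$ acts on $\psi_\alpha$ by the scalar $\tfrac{\omega^2-(\omega^\alpha)^2}{\omega^2}$; hence $\bigl(P_\alpha^\perp\bigr)^{-1}[\psi_\alpha]=\tfrac{\omega^2}{\omega^2-(\omega^\alpha)^2}\psi_\alpha$ and $B_{22}=-\tfrac{\langle\chi_{\p D},\cdot\rangle}{\pi R^2(\omega^2-(\omega^\alpha)^2)}\psi_\alpha+O(1/\omega)$.

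It then remains to insert the small-$\omega$ expansions of $E_1^\epsilon$ and $E_2^\epsilon$ (the latter recorded in \eqref{eq:E2}). Since $E_1^\epsilon=O(1)$ we get $B_{21}E_1^\epsilon=-\bigl(P_\alpha^\perp\bigr)^{-1}(\S_D^{\alpha,0})^{-1}E_1^\epsilon+O(\omega)$. For the other term, $B_{22}$ only feels $E_2^\epsilon$ through $\langle\chi_{\p D},E_2^\epsilon[\cdot]\rangle$, which picks out the $n=0$ Fourier block of $E_2^\epsilon$, whose leading coefficient is $\delta(1-R^2/R_d^2)$ by \eqref{eq:E2}; using $\langle\chi_{\p D},\phi\rangle=\langle\chi_{\p D},\phi_0\rangle$ for the $V_0$-component $\phi_0$ of $\phi$ gives $B_{22}E_2^\epsilon=-\delta(1-R^2/R_d^2)\tfrac{\langle\chi_{\p D},\cdot\rangle}{\pi R^2(\omega^2-(\omega^\alpha)^2)}\psi_\alpha+O(\omega)$. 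Averaging over $\alpha_2\in Y^*$ then produces the asserted formula for $M_0(\omega)$.

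The main obstacle is the bookkeeping of orders together with uniformity of the remainders in $\alpha_2$. The blocks of $A_0^{-1}$ carry factors $\omega^{-2}$ and those of $A_1^{-1}$ carry factors $(\omega^2-(\omega^\alpha)^2)^{-1}$, with $\omega,\omega^\alpha=O(\sqrt\delta)$, so one must check that after the cancellations coming from the rank-one projection structure — and from $Q_\alpha^*\chi_{\p D}=0$, which governs the $O(\delta)$ remainder in Lemma \ref{lem:invAa} — the surviving pieces are $O(1)$ and the errors are genuinely $O(\omega)$ \emph{and} uniform in $\alpha_2$, so that $\tfrac{1}{2\pi}\int_{Y^*}$ is legitimate. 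Uniformity holds because $\omega\notin\Lambda_{0,\alpha_1}$ keeps $\omega^2-(\omega^\alpha)^2$ bounded away from $0$ uniformly in $\alpha_2$; the only point at which the integrand could misbehave is $\alpha=(0,0)$ (possible when $\alpha_1=0$), a single point that does not affect the integral.
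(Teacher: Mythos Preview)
Your proposal is correct and follows essentially the same route as the paper: both derive the block-triangular form from \eqref{eq:Apert} and then compute the $(2,1)$ and $(2,2)$ blocks of $(\A^\alpha)^{-1}$ via Lemmas~\ref{lem:invA0A1}--\ref{lem:invAa}, combining them with the expansions of $E_1^\epsilon,E_2^\epsilon$. Your write-up is in fact more explicit than the paper's terse proof---you spell out the eigenvector identity $(P_\alpha^\perp)^{-1}[\psi_\alpha]=\tfrac{\omega^2}{\omega^2-(\omega^\alpha)^2}\psi_\alpha$ and the reason the $O(1/\omega)\cdot O(\delta)$ cross-terms land in $O(\omega)$, and you add a remark on uniformity in $\alpha_2$ that the paper leaves implicit. (One small wording quibble: the $2\times 2$ decomposition you use is the natural $L^2(\partial D)\times L^2(\partial D)$ block structure, not ``the Fourier block decomposition.'')
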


\begin{proof}
	The expression of $\mathcal{M}^{\epsilon,\delta,\alpha_1}$ given in  \eqnref{eq:Mmatrix} follows from equations \eqnref{eq:Mdensity} and \eqnref{eq:Apert}.
	Combining Lemmas \ref{lem:invA0A1} and  \ref{lem:invAa}, we find that
	$$(\A^\alpha(\omega, \delta))^{-1} = 
	\begin{pmatrix}
	\frac{(\omega^\alpha)^2}{\omega^2R\ln \omega} \frac{\left\langle \chi_{\p D}, \left(P_\alpha^\perp\right)^{-1}\left(\mathcal{S}_D^{\alpha,0}\right)^{-1}[\cdot] \right\rangle}{\capacity_{D,\alpha}}\chi_{\p D} + O\left(\frac{\omega}{\ln\omega}\right) &  -\frac{\langle \chi_{\p D}, \cdot \rangle}{\pi R^3\left(\omega^2-(\omega^\alpha)^2\right)\ln \omega} \chi_{\p D} +O\left(\frac{1}{\omega\ln\omega}\right) \\
	-\left(P_\alpha^\perp\right)^{-1}\left(\mathcal{S}_D^{\alpha,0}\right)^{-1} +O(\omega) & -\frac{\langle \chi_{\p D}, \cdot \rangle}{\pi R^2\left(\omega^2-(\omega^\alpha)^2\right)}\psi_\alpha +O\left(\frac{1}{\omega}\right)
	\end{pmatrix}.$$
	Combining this with equations \eqnref{eq:Mdensity}, \eqnref{eq:Apert} and  \eqnref{eq:E2} yields the desired expression for $M_0(\omega)$.
\end{proof}

\begin{rmk}
	It is clear that $\omega=\omega^\epsilon$ is a characteristic value of $\mathcal{M}^{\epsilon,\delta,\alpha_1}$ if and only if $\omega^\epsilon$ is a characteristic value for $I + M_0$. We have thus reduced the characteristic value problem for the two-dimensional matrix operator $\mathcal{M}^{\epsilon,\delta,\alpha_1}$ to the scalar operator $I+M_0$.
\end{rmk}

\subsection{Defect resonance frequency in the dilute regime}
The following theorem is the main result of this paper. Again, we say a frequency $\omega$ is in the subwavelength regime if $\omega = O(\sqrt{\delta})$.
\begin{thm} \label{thm:dilute}
	For $\delta$ and $R$ small enough, there is a unique characteristic value $\omega^\epsilon(\alpha_1)$ of $\M^{\epsilon,\delta,\alpha_1}(\omega)$ such that $\omega^\epsilon(\alpha_1)\neq \Lambda_{0,\alpha_1}$ and $\omega^\epsilon(\alpha_1)$ is in the subwavelength regime. Moreover, $$\omega^\epsilon(\alpha_1) = \hat\omega+O\left(R^2 + \delta\right),$$ where $\hat\omega$ is the root of the following equation:
	\begin{equation} \label{eq:dilute}
	1 + \left(\frac{\hat\omega^2R^2}{2\delta}\ln\frac{R}{R_d} + \left(1-\frac{R^2}{R_d^2} \right)\right)\frac{1}{2\pi}\int_{Y^*} \frac{(\omega^\alpha)^2}{\hat\omega^2-(\omega^\alpha)^2}\dx \alpha_2 = 0.\end{equation}
\end{thm}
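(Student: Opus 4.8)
The plan is first to invoke the Remark preceding the statement, which reduces the characteristic-value problem for $\mathcal{M}^{\epsilon,\delta,\alpha_1}$ to that for the scalar-operator factor $I+M_0(\omega)$, and then to localise the latter to the monopole subspace $V_0=\mathrm{span}\{\chi_{\p D}\}$. Working in the Fourier basis $\{e^{in\theta}\}$ of $L^2(\p D)$ and splitting $L^2(\p D)=V_0\oplus V_0^{\perp}$, one checks from the displayed expansions of $E_1^\epsilon$ and $E_2^\epsilon$ together with $\S_D^0 e^{in\theta}=-\frac{R}{2|n|}e^{in\theta}$ ($n\neq0$) that, on each $V_n$ with $n\neq0$, the leading part of $I+M_0(\omega)$ is $(R/R_d)^{2|n|}I_{V_n}$; since $R_d<R$ this is bounded below by $(R/R_d)^{2}>1$, so $I+M_0(\omega)$ is boundedly invertible on $V_0^{\perp}$ — uniformly for $\omega$ in the subwavelength gap — and carries no characteristic value there. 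The blocks coupling $V_0$ and $V_0^{\perp}$ are $O(R)$, as they stem only from the regular part of $\S_D^{\alpha,0}$. A Schur-complement reduction then shows that $\omega$ is a characteristic value of $\mathcal{M}^{\epsilon,\delta,\alpha_1}$ in the subwavelength regime exactly when the scalar function $g(\omega):=1+\bigl(M_0(\omega)\bigr)_{00}+O(R^2)$ vanishes, $(\,\cdot\,)_{00}$ denoting the $V_0\to V_0$ matrix entry.

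Second, I would compute $\bigl(M_0(\omega)\bigr)_{00}$ explicitly. The key exact identities are $\langle\chi_{\p D},\psi_\alpha\rangle=-\capacity_{D,\alpha}$ and $P_\alpha\psi_\alpha=\psi_\alpha$, the latter yielding $\bigl(P_\alpha^{\perp}\bigr)^{-1}\psi_\alpha=\frac{\omega^2}{\omega^2-(\omega^\alpha)^2}\psi_\alpha$. Inserting the $n=0$ entries $E_1^\epsilon|_{V_0}=R\ln(R/R_d)+O(\omega\ln\omega)$ and $E_2^\epsilon|_{V_0}=\delta(1-R^2/R_d^2)+O(\delta\omega^2\ln\omega)$, and rewriting Theorem \ref{approx_thm} as $\capacity_{D,\alpha}=\frac{(\omega^\alpha)^2\pi R^2}{\delta}\bigl(1+O(\delta)\bigr)$, a direct computation gives
\[
\bigl(M_0(\omega)\bigr)_{00}=\left(\frac{\omega^2R^2}{2\delta}\ln\frac{R}{R_d}+\Bigl(1-\frac{R^2}{R_d^2}\Bigr)\right)\frac{1}{2\pi}\int_{Y^*}\frac{(\omega^\alpha)^2}{\omega^2-(\omega^\alpha)^2}\dx\alpha_2+O(\omega),
\]
so that $g(\omega)=F(\omega)+O(R^2+\omega)$, where $F$ is the left-hand side of \eqref{eq:dilute}.

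Third, I would analyse $F$ and transfer the conclusion to $g$. Since $\alpha_2\mapsto\omega^\alpha$ is real-analytic with non-degenerate extrema, $\omega^2\mapsto\frac{1}{2\pi}\int_{Y^*}\frac{(\omega^\alpha)^2}{\omega^2-(\omega^\alpha)^2}\dx\alpha_2$ is strictly decreasing on $\bigl(\max_{\alpha_2}(\omega^\alpha)^2,\infty\bigr)$, tending to $+\infty$ at the left endpoint and to $0$ at $+\infty$; the prefactor equals, for $R$ small, the negative constant $1-R^2/R_d^2$ up to $O(R^2)$ (genuinely negative and bounded away from $0$ because $R_d<R$), so $F$ increases strictly from $-\infty$ to a positive value across the first spectral gap and has there a unique simple zero $\hat\omega$, while $F>0$ below the first band; thus $\hat\omega$ is the only subwavelength zero, and the constraint $\frac{1}{2\pi}\int_{Y^*}\frac{(\omega^\alpha)^2}{\hat\omega^2-(\omega^\alpha)^2}\dx\alpha_2=O(1)$ forces $\hat\omega$ to sit inside the gap at a distance comparable to $\hat\omega$ itself from the first band. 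Finally, $g$ is a small perturbation of $F$ near the simple zero $\hat\omega$, so by Rouch\'e's theorem (equivalently the Gohberg--Sigal theory, or the implicit function theorem after rescaling $\omega^2/\delta$) the operator $\mathcal{M}^{\epsilon,\delta,\alpha_1}(\omega)$ has a unique characteristic value $\omega^\epsilon(\alpha_1)$ in the subwavelength regime, with $\omega^\epsilon(\alpha_1)=\hat\omega+O(R^2+\delta)$ obtained by tracking the orders of the remainders; since this correction is of smaller order than the distance of $\hat\omega$ to the first band, $\omega^\epsilon(\alpha_1)$ stays in the gap and hence $\omega^\epsilon(\alpha_1)\notin\Lambda_{0,\alpha_1}$.

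I expect the main obstacle to be the Schur-complement reduction of the first paragraph: proving, uniformly in $\alpha_2\in Y^*$ and in $\omega$ throughout the gap, both that $I+M_0(\omega)$ is boundedly invertible on $V_0^{\perp}$ and that the $V_0$--$V_0^{\perp}$ coupling perturbs the scalar equation only by $O(R^2)$, given that the matrix entries of $E_1^\epsilon$ grow with the mode index; here one must either exploit suitable decay/analyticity of the densities or argue as in \cite{defectSIAM,defectX}.
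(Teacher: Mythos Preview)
Your proposal is correct and follows essentially the same route as the paper: both reduce to $I+M_0$, compute its Fourier-diagonal leading part (with $(R/R_d)^{2|n|}$ on $V_n$ for $n\neq0$ and the scalar expression on $V_0$), show that only the $V_0$ entry can vanish, analyse the resulting equation via monotonicity of $I(\omega,\alpha_1)$, and then invoke Gohberg--Sigal to transfer the simple zero $\hat\omega$ to a characteristic value $\omega^\epsilon$. Your Schur-complement framing is equivalent to the paper's ``$I+M_0=\hat M+O(R^2+\omega)$ with $\hat M$ diagonal'' formulation, and the obstacle you flag about uniformity on $V_0^\perp$ is precisely the point the paper also passes over lightly.
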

\begin{proof}
We seek the characteristic values of the operator $I+M_0$. We consider the dilute regime, \ie{} where $R$ is small. As shown in \cite{bandgap}, in this case we have
\begin{align}\label{eq:Sdilute}
\S_D^{\alpha,0}[\phi] &= \S_D^0[\phi] + R \mathcal{R}_\alpha(0) \int_{\p D}\phi \dx \sigma +  O(R^2\|\phi\|),
\end{align}
where $\mathcal{R}_\alpha(x) = \Gamma^{\alpha,0}(x) - \Gamma^0(x)$. In particular, 
$$\psi_\alpha = \left(\S_{D}^{\alpha,0}\right)^{-1}[\chi_{\p D}] = -\frac{\capacity_{D,\alpha}}{2\pi R}\chi_{\p D} + O\left(\frac{R^2}{\ln R}\right).$$
We will compute $M_0$ in the Fourier basis. It is known that \cite{MaCMiPaP}
$$(\S_D^0)_{V_m\rightarrow V_n} = -\delta_{m,n}\frac{R}{2|n|}, \quad m\neq 0,$$
which gives
$$\left((\S_{D}^{\alpha,0})^{-1}\right)_{V_m\rightarrow V_n} = -\delta_{m,n}\frac{2|n|}{R} + O(R), \quad m\neq 0.$$
Moreover,
$$\left( \left(P_\alpha^\perp\right)^{-1}\right)_{V_m\rightarrow V_n} =
\begin{cases} \delta_{m,n}\frac{\omega^2}{\omega^2-(\omega^\alpha)^2}+ O\left(\frac{R^2}{\ln R}\right), \quad & n= 0, \\
\delta_{m,n}, & n\neq 0. \end{cases}$$
In total, we have on the subspace $V_0$,
$$(I+M_0)_{V_0\rightarrow V_0} = 1 + \left(\frac{\omega^2R^2}{2\delta}\ln\frac{R}{R_d} + \left(1-\frac{R^2}{R_d^2} \right)\right)\frac{1}{2\pi}\int_{Y^*} \frac{(\omega^\alpha)^2}{\omega^2-(\omega^\alpha)^2}\dx \alpha_2 +O\left(\frac{R^2}{\ln R} + \omega\right).$$
Moreover, if $n\neq 0$, then
$$(I+M_0)_{V_m\rightarrow V_n} = 
\delta_{m,n}\left(\frac{R^{2|n|}}{R_d^{2|n|}}\right) +O\left(R^2 + \omega\right), \quad n\neq 0.
$$
In summary, the operator $I+M_0$ can be written as 
$$ I+M_0(\omega) = \hat M(\omega) + O\left(R^2 + \omega\right),$$
where the limiting operator $\hat M(\omega)$ is a diagonal operator in the Fourier basis, with non-zero diagonal entries for $n\neq 0$. We conclude that $\omega = \hat\omega$ is a characteristic value for $\hat M(\omega)$ if and only if one of the diagonal entries vanishes at $\omega = \hat\omega$, \ie{} if 
\begin{equation}\label{eq:dilutepf}
1 + \left(\frac{\hat\omega^2R^2}{2\delta}\ln\frac{R}{R_d} + \left(1-\frac{R^2}{R_d^2} \right)\right)\frac{1}{2\pi}\int_{Y^*} \frac{(\omega^\alpha)^2}{\hat\omega^2-(\omega^\alpha)^2}\dx \alpha_2 = 0.
\end{equation}
Next, we show that equation \eqnref{eq:dilutepf} has a zero $\hat{\omega} \notin \Lambda_{0,\alpha_1}$ satisfying $\hat{\omega} = O(\sqrt{\delta})$. Introduce the notation
$$I(\omega,\alpha_1) = \frac{1}{2\pi}\int_{Y^*} \frac{(\omega^\alpha)^2}{\omega^2-(\omega^\alpha)^2}\dx \alpha_2,$$
then equation \eqnref{eq:dilutepf} implies
\begin{equation} \label{eq:Iinv}
\hat\omega^2\left(\frac{R^2}{2\delta}\ln\frac{R}{R_d}\right) + \left(1-\frac{R^2}{R_d^2} \right) + \frac{1}{I(\hat{\omega},\alpha_1)} = 0.
\end{equation}
For a fixed $\alpha_1\in Y^*$, define $\omega^* =  \omega^{(\alpha_1,\pi)},$ which is the edge of the first band in $\Lambda_{0,\alpha_1}$. Observe that ${1}/{I(\omega,\alpha_1)}$ is monotonically increasing in $\omega$, and 
$$\lim_{\omega\rightarrow \omega^*}\frac{1}{I(\omega,\alpha_1)} = 0, \qquad \frac{1}{I(\omega,\alpha_1)} \rightarrow \frac{\omega^2}{\omega_0^2} \ \ \text{as} \ \ \omega\rightarrow \infty,$$
where $\omega_0^2$ is the average
$$\omega_0^2 = \frac{1}{2\pi}\int_{Y^*} (\omega^\alpha)^2\dx \alpha_2.$$
In the dilute regime, we can compute 
$$(\omega^*)^2 = -\frac{2\delta}{R^2\ln R} + O\left(\delta^2 + \frac{1}{R}\right),$$
so as $\omega \rightarrow \omega^*$, the right-hand side of equation \eqnref{eq:Iinv} tends to
$$\frac{\ln R_d}{\ln R} - \frac{R^2}{R_d^2} + O\left(\delta + R\right).$$
Since $R_d < R$, the leading-order term is negative. On the other hand, as $\omega \rightarrow \infty$, the right-hand side of \eqnref{eq:Iinv} tends to $\infty$. Since the right-hand side of equation \eqnref{eq:Iinv} is monotonically increasing, this equation has a unique zero $\omega = \hat\omega$. It can be verified that this zero has multiplicity one. Moreover, $\hat{\omega}$ satisfies $\hat\omega = O(\sqrt{\delta})$.

Now, we turn to the full operator $I+M_0(\omega)$. Since $I+M_0(\omega) = \hat M(\omega) + O\left(R^2 + \omega\right),$ by the Gohberg-Sigal theory \cite{MaCMiPaP, AKL, Gohberg1971}, close to $\hat\omega$ there is a unique characteristic value $\omega^\epsilon$ of the operator $I+M_0(\omega)$, satisfying
$$\omega^\epsilon = \hat\omega+O\left(R^2 + \delta\right).$$
This concludes the proof.
\end{proof}

\begin{rmk}
	In the case of $R_d > R$, \ie{} larger defect bubbles, similar arguments show that any subwavelength frequency $\omega^\epsilon(\alpha_1) \in \Lambda_{d,\alpha_1} \setminus \Lambda_{0,\alpha_1}$ satisfies equation \eqnref{eq:dilute} in the dilute regime. However, it is easily verified that this equation has no solutions $\hat\omega > \omega^*$ in the case $R_d>R$. The conclusion is that we must reduce the size of the defect bubbles in order to create subwavelength guided modes in the dilute regime.
\end{rmk}

\subsubsection{Absence of bound modes in the line defect direction}
In this section, we will show that the defect band is not contained in the pure point spectrum of the defect crystal.

\begin{lem} \label{lem:palpha1}
	For $(\alpha_1,\alpha_2) \in Y^*\times Y^*$, $\alpha_2 \neq 0$, the partial derivative of the quasi-periodic Green's function
	$$\frac{\p}{\p \alpha_1}  \Gamma^{\alpha,0}(0)$$
	is zero precisely when $\alpha_1 = 0$ or $\alpha_1 = \pi$.
\end{lem}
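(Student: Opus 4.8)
The plan is to compute $\dfrac{\p}{\p\alpha_1}\Gamma^{\alpha,0}(0)$ in closed form from the Fourier (lattice-sum) representation of the quasi-periodic Green's function and then read off its zeros. For $\alpha\neq 0$ one has $\Gamma^{\alpha,0}(x)=-\sum_{q\in\Z^2}|2\pi q+\alpha|^{-2}e^{i(2\pi q+\alpha)\cdot x}$. The value at $x=0$ needs regularization because of the logarithmic singularity of $\Gamma^0$, but the divergent part that this regularization removes is a lattice constant independent of $\alpha$, so it disappears under $\p/\p\alpha_1$. One is therefore left with the absolutely convergent series
\[
\frac{\p}{\p\alpha_1}\Gamma^{\alpha,0}(0)=\sum_{q=(q_1,q_2)\in\Z^2}\frac{2\,(2\pi q_1+\alpha_1)}{\big((2\pi q_1+\alpha_1)^2+(2\pi q_2+\alpha_2)^2\big)^2},
\]
the summand being $O(|q|^{-3})$ and every denominator nonzero since $\alpha_2\neq 0$. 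I would take this as the starting point — it is the standard representation of $\p_\alpha\Gamma^{\alpha,0}$ at the origin — and use absolute convergence to reorder the sum freely.

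Next I would perform the $q_1$-summation for each fixed $q_2$. Writing $b=b_{q_2}:=2\pi q_2+\alpha_2$, which is nonzero for every $q_2\in\Z$ since $\alpha_2\in(0,2\pi)$, the classical identity $\sum_{k\in\Z}\big((k+t)^2+\beta^2\big)^{-1}=\dfrac{\pi}{\beta}\,\dfrac{\sinh 2\pi\beta}{\cosh 2\pi\beta-\cos 2\pi t}$, used with $t=\alpha_1/2\pi$ and $\beta=|b|/2\pi$, gives $\sum_{k\in\Z}\big((2\pi k+\alpha_1)^2+b^2\big)^{-1}=\dfrac{\sinh b}{2b\,(\cosh b-\cos\alpha_1)}$. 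Differentiating this in $\alpha_1$ under the sum (legitimate, both series converging uniformly in $\alpha_1$) and then summing over $q_2$ gives
\[
\frac{\p}{\p\alpha_1}\Gamma^{\alpha,0}(0)=\sin\alpha_1\cdot C(\alpha_1,\alpha_2),\qquad C(\alpha_1,\alpha_2):=\sum_{q_2\in\Z}\frac{\sinh(2\pi q_2+\alpha_2)}{2(2\pi q_2+\alpha_2)\,\big(\cosh(2\pi q_2+\alpha_2)-\cos\alpha_1\big)^2}.
\]

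Finally I would note that $C(\alpha_1,\alpha_2)>0$: each summand is strictly positive, since $\sinh(b)/b>0$ for every $b\neq0$ and $\cosh b-\cos\alpha_1\ge\cosh b-1>0$ (again because $b_{q_2}\neq0$), while the series converges because the summand decays like $|q_2|^{-1}e^{-2\pi|q_2|}$. Hence $\p_{\alpha_1}\Gamma^{\alpha,0}(0)$ vanishes exactly when $\sin\alpha_1=0$, i.e. at $\alpha_1=0$ and $\alpha_1=\pi$ in $Y^*=[0,2\pi)$, which is the assertion. The one genuinely delicate point is the first step: rigorously justifying that the $\alpha_1$-derivative of the regularization-requiring Green's-function lattice sum is given by the displayed absolutely convergent series (equivalently, that the $\alpha$-dependent part of the regularized sum may be differentiated termwise). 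Everything afterwards is an elementary manipulation of the $\sinh$-identity plus a sign check, with no cancellations to monitor.
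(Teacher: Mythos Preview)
Your argument is correct. Both you and the paper start from the same lattice-sum formula
\[
\frac{\p}{\p\alpha_1}\Gamma^{\alpha,0}(0)=\sum_{m\in\Z^2}\frac{2(\alpha_1+2\pi m_1)}{|\alpha+2\pi m|^4},
\]
but from there the routes diverge. The paper simply appeals to ``symmetry of the summation'': pairing $m_1\leftrightarrow -m_1$ (for $\alpha_1=0$) or $m_1\leftrightarrow -m_1-1$ (for $\alpha_1=\pi$) makes the series vanish, and nonvanishing elsewhere is asserted without further detail. You instead sum the $q_1$-series in closed form via the classical identity $\sum_k\big((k+t)^2+\beta^2\big)^{-1}=\dfrac{\pi}{\beta}\,\dfrac{\sinh 2\pi\beta}{\cosh 2\pi\beta-\cos 2\pi t}$, which lets you factor the whole expression as $\sin\alpha_1\cdot C(\alpha_1,\alpha_2)$ with $C>0$ term by term.

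What each buys: the paper's symmetry argument is a one-liner for the ``if'' direction but leaves the ``only if'' essentially unproved; your explicit factorisation handles both directions uniformly and transparently, at the cost of invoking the cotangent/hyperbolic partial-fractions identity. You are also more careful than the paper about the regularisation of $\Gamma^{\alpha,0}(0)$, correctly noting that the divergent piece is $\alpha$-independent and hence drops out after differentiation; the paper does not comment on this at all.
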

\begin{proof}
From the spectral form of the Green's function  \cite{MaCMiPaP}:
$$\Gamma^{\alpha,0}(x) = -\sum_{m\in \Z^2} \frac{e^{i(\alpha+2\pi m)\cdot x}}{|\alpha+2\pi m|^2},$$
it can be easily shown that	
$$\nabla_\alpha \Gamma^{\alpha,0}(0) = \sum_{m\in \Z^2}\frac{\alpha + 2\pi m}{|\alpha + 2\pi m|^4}.$$
By symmetry of the summation, we find that 
$$\frac{\partial}{\partial \alpha_1} \Gamma^{\alpha,0}(0) = 0$$
if and only if $\alpha_1 = 0$ or $\alpha_1=\pi$.
\end{proof}

\begin{prop} \label{prop:palpha1}
	For $\delta$ and $R$ small enough, and for $\alpha_1 \neq 0,\pi$, the characteristic value $\omega^\epsilon = \omega^\epsilon(\alpha_1)$ satisfies 
	$$\frac{\p \omega^\epsilon}{\p \alpha_1} \neq 0.$$
\end{prop}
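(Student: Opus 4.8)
The plan is to apply the analytic implicit function theorem to the scalar characteristic equation extracted in the proof of Theorem~\ref{thm:dilute}, and to reduce the statement $\p\omega^\epsilon/\p\alpha_1\neq 0$ to the non-vanishing of an integral that is governed by Lemma~\ref{lem:palpha1}. First I would record that, by Theorem~\ref{thm:dilute} and the block form \eqref{eq:Mmatrix}, $\omega^\epsilon(\alpha_1)$ is a characteristic value of $I+M_0$ of multiplicity one; since the $V_n$-diagonal entries of $I+M_0$ with $n\neq 0$ equal $R^{2|n|}/R_d^{2|n|}+O(R^2+\omega)$ and are bounded away from $0$, the Schur complement of $I+M_0$ onto the one-dimensional space $V_0$ is a scalar analytic function $G(\omega,\alpha_1)$ having $\omega^\epsilon$ as a simple zero, with
\[
G(\omega,\alpha_1)=1+p(\omega)\,I(\omega,\alpha_1)+O\!\big(R^2/|\ln R|+\omega\big),\quad p(\omega)=\frac{\omega^2R^2}{2\delta}\ln\frac{R}{R_d}+\Big(1-\frac{R^2}{R_d^2}\Big),
\]
where $I(\omega,\alpha_1)=\frac{1}{2\pi}\int_{Y^*}\frac{(\omega^\alpha)^2}{\omega^2-(\omega^\alpha)^2}\dx\alpha_2$. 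The quasi-periodic layer potentials, hence $G$, are analytic in $\alpha_1$ for $\alpha_1\neq 0$ (the only degeneracy of $\S_D^{\alpha,0}$ is at $\alpha=0$, a single point of the $\alpha_2$-integration), so $\omega^\epsilon$ is real-analytic in $\alpha_1$ near any fixed $\alpha_1\neq 0$, with $\p_{\alpha_1}\omega^\epsilon=-\p_{\alpha_1}G/\p_\omega G$; simplicity of the zero gives $\p_\omega G\neq 0$, so it suffices to prove $\p_{\alpha_1}G(\omega^\epsilon,\alpha_1)\neq 0$ when $\alpha_1\neq 0,\pi$.

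Only $I$ (and the remainder) depends on $\alpha_1$, so $\p_{\alpha_1}G=p(\omega^\epsilon)\,\p_{\alpha_1}I(\omega^\epsilon,\alpha_1)+\p_{\alpha_1}O(R^2/|\ln R|+\omega)$. Evaluating $G(\omega^\epsilon,\alpha_1)=0$ shows $p(\omega^\epsilon)I(\omega^\epsilon,\alpha_1)=-1+o(1)$, so both factors are bounded away from $0$. Since $\omega^\epsilon=\hat\omega+O(R^2+\delta)$ and $\hat\omega>\omega^*=\max_{\alpha_2}\omega^{(\alpha_1,\alpha_2)}$ (established in the proof of Theorem~\ref{thm:dilute}), the denominator $(\omega^\epsilon)^2-(\omega^\alpha)^2$ stays bounded below; writing $I=\omega^2 J-1$ with $J(\omega,\alpha_1)=\frac{1}{2\pi}\int_{Y^*}(\omega^2-(\omega^\alpha)^2)^{-1}\dx\alpha_2$ and differentiating under the integral gives
\[
\p_{\alpha_1}I(\omega^\epsilon,\alpha_1)=\frac{(\omega^\epsilon)^2}{2\pi}\int_{Y^*}\frac{\p_{\alpha_1}(\omega^\alpha)^2}{\big((\omega^\epsilon)^2-(\omega^\alpha)^2\big)^2}\dx\alpha_2 .
\]
As the kernel is strictly positive, it suffices to show $\p_{\alpha_1}(\omega^\alpha)^2$ has constant, non-zero sign in $\alpha_2$.

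Here Lemma~\ref{lem:palpha1} enters. In the dilute regime $(\omega^\alpha)^2=\frac{\delta}{\pi R^2}\capacity_{D,\alpha}+O(\delta^2)$, and the expansion \eqref{eq:Sdilute} gives $\capacity_{D,\alpha}=-2\pi\big(\ln R+2\pi R\,\mathcal{R}_\alpha(0)\big)^{-1}+\cdots$, hence
\[
\p_{\alpha_1}(\omega^\alpha)^2=\frac{\delta}{\pi R^2}\cdot\frac{4\pi^2R}{(\ln R)^2}\,\p_{\alpha_1}\Gamma^{\alpha,0}(0)\,\big(1+o(1)\big),
\]
using $\mathcal{R}_\alpha=\Gamma^{\alpha,0}-\Gamma^0$ and the $\alpha$-independence of $\Gamma^0$. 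By Lemma~\ref{lem:palpha1}, for $\alpha_1\neq 0,\pi$ the quantity $\p_{\alpha_1}\Gamma^{\alpha,0}(0)$ is non-zero for every $\alpha_2\neq 0$; being continuous and non-vanishing on the connected set $\{\alpha_2\in(0,2\pi)\}$, it has a fixed sign there (from the lattice sum $\p_{\alpha_1}\Gamma^{\alpha,0}(0)=\sum_{m\in\Z^2}\frac{\alpha_1+2\pi m_1}{|\alpha+2\pi m|^4}$ this sign equals $\operatorname{sgn}(\sin\alpha_1)$). Hence $\p_{\alpha_1}(\omega^\alpha)^2$ has constant sign for a.e.\ $\alpha_2$, the displayed integral is non-zero, $\p_{\alpha_1}G(\omega^\epsilon,\alpha_1)\neq 0$, and therefore $\p\omega^\epsilon/\p\alpha_1\neq 0$.

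The main obstacle is quantitative rather than structural. The leading term $p(\omega^\epsilon)\,\p_{\alpha_1}I$ of $\p_{\alpha_1}G$ is itself small: it encodes the weak $\alpha$-modulation of the subwavelength band, of relative size $\sim R/|\ln R|$. To turn the heuristic ``$\approx$'' into a genuine inequality one must show that the $\alpha_1$-derivative of the $O(R^2/|\ln R|+\omega)$ remainder in $G$ is strictly dominated by it, which requires carrying the error estimates of Section~\ref{sec-3} uniformly, together with first $\alpha_1$-derivatives, on compact subsets of $Y^*\setminus\{0\}$; since those estimates come from layer-potential expansions that are analytic in $\alpha$ away from $\alpha=0$, Cauchy estimates furnish the needed derivative bounds and the rest is bookkeeping of orders. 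An equivalent route isolating this point is to apply the implicit function theorem directly to the explicit equation \eqref{eq:dilute} to obtain $\p_{\alpha_1}\hat\omega\neq 0$, and then transfer it to $\omega^\epsilon$ via Gohberg--Sigal stability of the simple characteristic value together with $\omega^\epsilon=\hat\omega+O(R^2+\delta)$.
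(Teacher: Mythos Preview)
Your proposal is correct and follows essentially the same route as the paper: differentiate the scalar characteristic equation coming from the $V_0$-component, argue that the $\omega$-derivative is non-zero (the paper does this by the algebraic identity $A=-\int y(ay+b)/(x-y)^2\,\dx\alpha_2<0$, you via simplicity of the characteristic value), and show the $\alpha_1$-derivative reduces to $\int \p_{\alpha_1}(\omega^\alpha)^2/(x-y)^2\,\dx\alpha_2$, which is non-zero by Lemma~\ref{lem:palpha1} together with the constant-sign argument. The paper in fact carries this out for $\hat\omega$ rather than $\omega^\epsilon$ and leaves the transfer implicit---exactly the ``equivalent route'' you describe in your last paragraph---so your more careful accounting of the remainder is, if anything, an improvement.
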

\begin{proof}
To simplify the computations, we introduce the following notation:
\begin{alignat*}{3}
&a = \frac{R^2}{4\pi\delta}\ln\frac{R}{R_d}, \quad \qquad &&b = \frac{1}{2\pi} \left(1-\frac{R^2}{R_d^2} \right), \\
&x = x(\alpha_1) = \hat{\omega}^2,  &&y = y(\alpha_1,\alpha_2) = (\omega^\alpha)^2.
\end{alignat*}
Then equation \eqnref{eq:dilute} reads
$$\left(ax + b\right)\int_{Y^*} \frac{y}{x-y}\dx \alpha_2 = 1.$$
Denote by $x' = \frac{\p x}{\p \alpha_1}$ and $y' = \frac{\p y}{\p \alpha_1}$, then we have
$$ax' \int_{Y^*} \frac{y}{x-y}\dx \alpha_2 - (ax+b)\int_{Y^*} \frac{x'y - xy'}{(x-y)^2}\alpha_2 = 0,$$
or equivalently, 
$$x'A + B = 0$$
where
$$A = a \int_{Y^*} \frac{y}{x-y}\dx \alpha_2 -(ax+b)\int_{Y^*} \frac{y}{(x-y)^2}\dx\alpha_2,$$
and 
$$B = (ax+b)x \int_{Y^*} \frac{y'}{(x-y)^2}\dx\alpha_2.$$
First, we show that $A\neq 0$ which implies that the zeros of $x'$ coincides with the zeros of $B$. We have
\begin{align*}
A &= a \int_{Y^*} \frac{y}{x-y}\dx \alpha_2 -(ax+b)\int_{Y^*} \frac{y}{(x-y)^2}\dx\alpha_2 \\
&= \int_{Y^*} \frac{ay(x-y)-(ax+b)y}{(x-y)^2}\dx\alpha_2, \\
&= -\int_{Y^*} \frac{y(ay+b)}{(x-y)^2}\dx\alpha_2 < 0,
\end{align*}
since $y(ay+b) > 0$ for all $(\alpha_1,\alpha_2) \in Y^*\times Y^*$.

Next, we show that the leading order of $B$ vanishes exactly at the points $\alpha_1 = 0$ and $\alpha = \pi$. Using equations \eqnref{eq:delta} and \eqnref{eq:Sdilute}, we have 
\begin{align*}
y' &= \frac{\partial}{\partial \alpha_1} (\omega^\alpha)^2 \\
 &= \frac{\partial}{\partial \alpha_1} \left(\frac{-2\delta}{R^2\ln R + 2\pi R^3\mathcal{R}_\alpha(0)}\right) + O\left(\frac{R^3}{\ln R}+ \delta ^2\right)\\
&=\frac{4\pi R^3\delta}{\left(R^2\ln R + 2\pi R^3\mathcal{R}_\alpha(0)\right)^2}\frac{\partial}{\partial \alpha_1}\mathcal{R}_\alpha(0)  + O\left(\frac{R^3}{\ln R}+\delta^2\right).
\end{align*}
Since $\mathcal{R}_\alpha = \Gamma^{\alpha,0} - \Gamma^0$, using Lemma \ref{lem:palpha1} we conclude that for $\delta$ and $R$ small enough and $\alpha_1 \neq 0,\pi$, $y'$ is non-zero for any $\alpha_2$. Hence $B$ is non-zero, which concludes the proof.
\end{proof}

Proposition	\ref{prop:palpha1} shows that the defect dispersion relation is not flat, except for local extrema at $\alpha_1 = 0$ and $\alpha_1 = \pi$. Thus, the defect band is not in the pure point spectrum of the defect operator, and corresponding Bloch modes are not bounded in the line defect direction.

\subsubsection{Bandgap located defect bands}
In this section, we will demonstrate that it is possible to position the entire defect band in the bandgap region with a suitable choice of $\epsilon$. Recall that $\epsilon = R_d - R$. As before, let
$$I(\omega,\alpha_1) = \frac{1}{2\pi}\int_{Y^*} \frac{(\omega^\alpha)^2}{\omega^2-(\omega^\alpha)^2}\dx \alpha_2.$$
\begin{lem} \label{lem:min}
For a fixed $\omega \notin \Lambda_0$, the minimum $$\min_{\alpha_1 \in Y^*} I(\omega,\alpha_1)$$ is attained at $\alpha_1 = \alpha_0$ with $\alpha_0\rightarrow 0$ as $\delta\rightarrow 0$.
\end{lem}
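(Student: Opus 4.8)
The plan is to send $\delta\to 0$ (with $R$ small), reduce $I(\omega,\cdot)$ to an explicit leading-order functional whose minimizer is visibly $\alpha_1=0$, and transfer this by a soft perturbation argument on the compact torus $Y^*$. First, since $\omega\notin\Lambda_0$ is \emph{fixed} whereas the whole first band is subwavelength, $\max_\alpha(\omega^\alpha)^2=O(\delta)$, we have $\omega^2-(\omega^\alpha)^2\ge c_0>0$ uniformly in $\alpha$ for $\delta$ small. Writing $\tfrac{(\omega^\alpha)^2}{\omega^2-(\omega^\alpha)^2}=\tfrac{(\omega^\alpha)^2}{\omega^2}+\tfrac{(\omega^\alpha)^4}{\omega^2(\omega^2-(\omega^\alpha)^2)}$ and using $(\omega^\alpha)^2=O(\delta)$ uniformly gives $I(\omega,\alpha_1)=\tfrac{1}{2\pi\omega^2}\int_{Y^*}(\omega^{(\alpha_1,\alpha_2)})^2\,\dx\alpha_2+O(\delta^2)$, the error uniform in $\alpha_1$. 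By \eqnref{eq:delta} this equals $\tfrac{\delta}{2\pi^2R^2\omega^2}\,\mathcal J(\alpha_1)+O(\delta^2)$ with $\mathcal J(\alpha_1):=\int_{Y^*}\capacity_{D,(\alpha_1,\alpha_2)}\,\dx\alpha_2$ (the single point $\alpha_2=0$ where $\capacity_{D,(0,0)}=0$ by continuity contributes nothing). Hence, up to the positive factor $\tfrac{\delta}{2\pi^2R^2\omega^2}$ and an error that is $o(1)$ relative to that factor as $\delta\to 0$, minimizing $I(\omega,\cdot)$ over $\alpha_1$ amounts to minimizing $\mathcal J$, and it suffices to show $\mathcal J$ has a unique global minimum over $Y^*$, at $\alpha_1=0$.

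To locate the minimum of $\mathcal J$ I would use the dilute regime. By the expansion \eqnref{eq:Sdilute} (as in the proof of Theorem~\ref{thm:dilute}), for $R$ small $\capacity_{D,\alpha}=-\dfrac{2\pi}{\ln R+2\pi R\,\mathcal R_\alpha(0)}+(\text{higher order in }R)$, where $\mathcal R_\alpha(x)=\Gamma^{\alpha,0}(x)-\Gamma^0(x)$; since $\ln R<0$, the leading term is a strictly increasing function of $\mathcal R_\alpha(0)$, and the $R$-corrections do not upset this for $R$ small. So the problem reduces to the $\alpha_1$-monotonicity of $\mathcal R_{(\alpha_1,\alpha_2)}(0)=\Gamma^{(\alpha_1,\alpha_2),0}(0)-\Gamma^0(0)$ for fixed $\alpha_2\neq 0$.

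Lemma~\ref{lem:palpha1} already shows that $\p_{\alpha_1}\Gamma^{\alpha,0}(0)$ vanishes exactly at $\alpha_1\in\{0,\pi\}$; to pin down the \emph{sign} in between — the new ingredient — I would start from $\p_{\alpha_1}\Gamma^{\alpha,0}(0)=\sum_{m\in\Z^2}\frac{\alpha_1+2\pi m_1}{|\alpha+2\pi m|^4}$ (from that lemma's proof) and carry out the inner sum over $m_1$ in closed form: with $c=\alpha_2+2\pi m_2\neq 0$, using $\sum_{n\in\Z}\frac{1}{(n+a)^2+b^2}=\frac{\pi\sinh(2\pi b)}{b(\cosh(2\pi b)-\cos(2\pi a))}$ together with $\sum_{m_1}\frac{\alpha_1+2\pi m_1}{((\alpha_1+2\pi m_1)^2+c^2)^2}=-\tfrac12\p_{\alpha_1}\sum_{m_1}\frac{1}{(\alpha_1+2\pi m_1)^2+c^2}$, one obtains
$$\sum_{m_1\in\Z}\frac{\alpha_1+2\pi m_1}{((\alpha_1+2\pi m_1)^2+c^2)^2}=\frac{\sinh|c|\,\sin\alpha_1}{4\,|c|\,(\cosh|c|-\cos\alpha_1)^2},$$
which for \emph{every} $m_2$ is $>0$ when $\alpha_1\in(0,\pi)$ and $<0$ when $\alpha_1\in(\pi,2\pi)$. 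Summing the (absolutely convergent) series over $m_2$, $\p_{\alpha_1}\mathcal R_{(\alpha_1,\alpha_2)}(0)>0$ on $(0,\pi)$ and $<0$ on $(\pi,2\pi)$, for all $\alpha_2\neq 0$. Consequently $\p_{\alpha_1}\capacity_{D,\alpha}$ has the same fixed sign on each arc, so $\mathcal J$ is strictly increasing on $(0,\pi)$ and strictly decreasing on $(\pi,2\pi)$ and therefore has its unique global minimum over $Y^*=[0,2\pi)$ at $\alpha_1=0$ (and its maximum at $\alpha_1=\pi$).

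Finally I would conclude by perturbation: write $I(\omega,\alpha_1)=c(\delta)\big(\mathcal J(\alpha_1)+\rho(\alpha_1,\delta)\big)$ with $c(\delta)>0$ and $\|\rho(\cdot,\delta)\|_\infty\to 0$ as $\delta\to 0$; given a neighbourhood $U$ of $0$ in $Y^*$, set $\eta:=\inf_{Y^*\setminus U}\mathcal J-\mathcal J(0)>0$ (positive by the previous step), and observe that for $\delta$ small enough any minimizer $\alpha_0(\delta)$ of $I(\omega,\cdot)$ satisfies $\mathcal J(\alpha_0(\delta))\le\mathcal J(0)+2\|\rho(\cdot,\delta)\|_\infty<\mathcal J(0)+\eta$, forcing $\alpha_0(\delta)\in U$. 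Hence $\alpha_0(\delta)\to 0$, which is the claim. The main obstacle is the sign computation in the third paragraph: Lemma~\ref{lem:palpha1} only narrows the candidate minimizers to $\{0,\pi\}$, and establishing that $\alpha_1=0$ is the strict (hence, for small $\delta$, unique) minimizer requires the explicit lattice sum above; the remaining steps — uniform control of the subwavelength band, the dilute capacitance expansion already recorded in the paper, and a compactness argument on $Y^*$ — are routine.
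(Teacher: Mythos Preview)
Your proof is correct for the statement as literally written, and in fact supplies more detail than the paper at the key sign step; but the route differs from the paper's in a way worth noting. The paper does not expand $I$ in $\delta$. Instead it observes directly that the integrand $y\mapsto y/(\omega^2-y)$ is strictly increasing on $(0,\omega^2)$, so if $(\omega^{(\alpha_1,\alpha_2)})^2$ is minimized over $\alpha_1$ at the \emph{same} point for every $\alpha_2$, then so is $I(\omega,\cdot)$; it then identifies this common minimizer with that of $\capacity_{D,\alpha}$ via Theorem~\ref{approx_thm}. You instead Taylor-expand $I$ using $(\omega^\alpha)^2=O(\delta)\ll\omega^2$, reduce to $\mathcal J(\alpha_1)=\int_{Y^*}\capacity_{D,(\alpha_1,\alpha_2)}\,\dx\alpha_2$, and conclude by a compactness perturbation on $Y^*$. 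The paper's monotonicity argument is shorter and, importantly, remains valid when $\omega$ itself is subwavelength, $\omega=O(\sqrt\delta)$ --- which is precisely how the lemma is used in the next proposition --- whereas your expansion $I=\omega^{-2}\int(\omega^\alpha)^2\,\dx\alpha_2+O(\delta^2)$ genuinely needs $\omega^2\gg\delta$. Conversely, your explicit lattice-sum computation showing $\partial_{\alpha_1}\Gamma^{\alpha,0}(0)>0$ on $(0,\pi)$ (hence that $\alpha_1=0$, not $\alpha_1=\pi$, is the minimum of $\capacity_{D,\alpha}$) is a genuine addition: the paper only invokes Lemma~\ref{lem:palpha1}, which locates the critical points $\{0,\pi\}$ but does not by itself say which is the minimum.
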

\begin{proof}
	We begin by observing that the minima of $I(\omega,\alpha_1)$ and $\omega^{(\alpha_1,\alpha_2)}$ are attained at the same point $\alpha_1 = \alpha_0\in Y^*$. Using Lemma \ref{prop:palpha1}, for every fixed $\alpha_2 \neq 0$ the minimum of $\capacity_{D,\alpha}$ is attained at $\alpha_1 = 0$, so by Theorem \ref{approx_thm} the minimum of $\omega^{(\alpha_1,\alpha_2)}$ is attained at $\alpha_1 = \alpha_0$ with $\alpha_0 \rightarrow 0$ as $\delta \rightarrow 0$. Since $\omega^{(0,0)} = 0$ (see \cite{bandgap}) this is true for all $\alpha_2 \in Y^*$.
\end{proof}

\begin{prop}
	For $\delta$ small enough, there exists an $\epsilon$ such that for all $\alpha_1 \in Y^*$ we have
	$$\omega^\epsilon(\alpha_1) \notin \Lambda_0.$$
\end{prop}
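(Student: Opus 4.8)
The plan is to show that a sufficiently large reduction of the defect radius pushes the entire defect band above the top $\omega_1^*$ of the first Bloch band, while the subwavelength scaling automatically keeps it below the second band. Since $\omega^\epsilon(\alpha_1)$ is subwavelength it is $O(\sqrt\delta)$, whereas by Theorem \ref{main_bandgap} the bottom $\min_\alpha\omega_2^\alpha$ of the second band stays bounded away from $0$ as $\delta\to0$; hence for $\delta$ small $\omega^\epsilon(\alpha_1)$ lies below the second band, and because $\Lambda_0$ meets the subwavelength range only in $[0,\omega_1^*]$, proving $\omega^\epsilon(\alpha_1)\notin\Lambda_0$ amounts to proving $\omega^\epsilon(\alpha_1)>\omega_1^*$ for every $\alpha_1\in Y^*$. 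By Theorem \ref{thm:dilute} we have $\omega^\epsilon(\alpha_1)=\hat\omega(\alpha_1)+O(R^2+\delta)$ with $\hat\omega(\alpha_1)$ the root of \eqref{eq:dilute}, so it suffices to choose $\epsilon$ for which $\hat\omega(\alpha_1)-\omega_1^*$ is positive and dominates the $O(R^2+\delta)$ remainder, uniformly in $\alpha_1$.

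I would then reformulate \eqref{eq:Iinv} as $1/I(\omega,\alpha_1)=G_\epsilon(\omega)$, where $G_\epsilon(\omega)=\frac{R^2}{R_d^2}-1-\omega^2\frac{R^2}{2\delta}\ln\frac{R}{R_d}$. As already noted in the proof of Theorem \ref{thm:dilute}, $\omega\mapsto 1/I(\omega,\alpha_1)$ is strictly increasing on $(\omega^{(\alpha_1,\pi)},\infty)$ and vanishes at the left endpoint, while $G_\epsilon$ is strictly decreasing; consequently $\hat\omega(\alpha_1)>\omega_1^*$ holds precisely when $1/I(\omega_1^*,\alpha_1)<G_\epsilon(\omega_1^*)$ (the case $\omega_1^*=\omega^{(\alpha_1,\pi)}$ being automatic). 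The value $G_\epsilon(\omega_1^*)$ does not depend on $\alpha_1$, and Lemma \ref{lem:min} identifies $\alpha_1=\alpha_0$, with $\alpha_0\to0$ as $\delta\to0$, as the minimiser of $I(\omega_1^*,\cdot)$, i.e. the maximiser of $1/I(\omega_1^*,\cdot)$. Hence the whole family of inequalities reduces to the single condition
\begin{equation}\label{eq:keycond}
\frac{1}{I(\omega_1^*,\alpha_0)}+(\omega_1^*)^2\,\frac{R^2}{2\delta}\ln\frac{R}{R_d}<\frac{R^2}{R_d^2}-1.
\end{equation}

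To satisfy \eqref{eq:keycond} I would decrease $R_d$ (i.e. make $\epsilon=R_d-R$ more negative). Using the dilute expansion $(\omega_1^*)^2=-\frac{2\delta}{R^2\ln R}+O(\delta^2+R^{-1})$ invoked in the proof of Theorem \ref{thm:dilute}, the second term on the left of \eqref{eq:keycond} equals $\frac{\ln(R/R_d)}{|\ln R|}+o(1)$, which grows only logarithmically in $1/R_d$, whereas the right-hand side grows like $R^2/R_d^2$. Moreover $1/I(\omega_1^*,\alpha_0)$ is $O(1)$---in fact $o(1)$ as $R\to0$, because in the dilute regime $(\omega^\alpha)^2/(\omega_1^*)^2\to1$ for each fixed $\alpha_2\neq0$, so the denominator $(\omega_1^*)^2-(\omega^{\alpha})^2$ of $I(\omega_1^*,\alpha_0)$ is a small difference of nearly equal positive quantities. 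Thus the left-hand side of \eqref{eq:keycond} stays bounded while the right-hand side can be made arbitrarily large, so \eqref{eq:keycond} holds for all $R_d$ below some threshold $R_d^{\ast}\in(0,R)$, equivalently for all $\epsilon<\epsilon_0:=R_d^{\ast}-R<0$; in particular such an $\epsilon$ exists. This gives $\hat\omega(\alpha_1)>\omega_1^*$ for every $\alpha_1$, and quantifying the strict gap in \eqref{eq:keycond} (together with a bound on $\partial_\omega(1/I)$ on the relevant interval) yields a lower bound of the form $\hat\omega(\alpha_1)-\omega_1^*\gtrsim\sqrt\delta$ up to logarithmic factors, which dominates the $O(R^2+\delta)$ error from Theorem \ref{thm:dilute} once $R$ is small relative to $\delta$; hence $\omega^\epsilon(\alpha_1)>\omega_1^*$ as well.

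The step I expect to be the main obstacle is this last transfer from $\hat\omega(\alpha_1)$ to $\omega^\epsilon(\alpha_1)$: the separation $\hat\omega(\alpha_1)-\omega_1^*$ is itself only of order $\sqrt\delta$, so one must check carefully that it is not absorbed by the $O(R^2+\delta)$ remainder, which imposes a joint smallness relation between $R$ and $\delta$; one must also verify that the minimiser $\alpha_0$ of Lemma \ref{lem:min} and the dilute expansions of $\omega^\alpha$ and of $I(\omega_1^*,\alpha_0)$ remain valid uniformly over the range used, and that for the chosen $R_d$ the root $\hat\omega(\alpha_1)$ still lies in the subwavelength regime in which the asymptotics of Section \ref{sec-3} were derived.
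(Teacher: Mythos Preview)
Your argument is essentially the paper's own: reduce via Lemma~\ref{lem:min} to the single quasi-momentum $\alpha_1=\alpha_0$, substitute the dilute expansion $(\omega_1^*)^2=-\tfrac{2\delta}{R^2\ln R}+\cdots$ into \eqref{eq:Iinv} to obtain the condition $\tfrac{R^2}{R_d^2}-\tfrac{\ln R_d}{\ln R}=\tfrac{1}{I(\omega_1^*,\alpha_0)}+O(\sqrt\delta+R)$, and then observe that the left side sweeps $(0,\infty)$ as $R_d$ ranges over $(0,R)$. Your condition \eqref{eq:keycond} is exactly this after the same substitution, and your growth comparison ($R^2/R_d^2$ versus $\ln(R/R_d)/|\ln R|$) is just a more explicit justification of the intermediate-value step the paper leaves implicit; the paper also does not dwell on the $\hat\omega\to\omega^\epsilon$ transfer you flag as the main obstacle, simply folding it into the $O(\sqrt\delta+R)$ term.
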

\begin{proof}
	We want to show that 
	$$\min_{\alpha_1 \in Y^*} \omega^\epsilon(\alpha_1) > \max_{\alpha\in Y^*\times Y^*} \omega^{\alpha}.$$
	Using Lemma \ref{lem:min}, it is easy to see that 
	$$\min_{\alpha_1 \in Y^*} \omega^\epsilon(\alpha_1)$$
	is attained at $\alpha_1 = \alpha_0$. Moreover, from \cite{highfrequency} we know that
	$$\max_{\alpha\in Y^*\times Y^*} \omega^{\alpha}$$
	is attained at $\alpha=\alpha^*=(\pi,\pi)$. Using Theorem \ref{thm:dilute}, we conclude that the lower edge of the defect band coincides with the upper edge of the unperturbed band if 
	$$
	\left(\frac{R^2}{R_d^2} - \frac{\ln R_d}{\ln R} \right) = \frac{1}{I(\omega^*,\alpha_0)} + O\left(\sqrt{\delta} + R\right).
	$$
	For small enough $R$ and $\delta$, the right-hand side is positive, while the left-hand side ranges from $0$ to $+\infty$ for $\epsilon \in (-R,0)$. Hence we can find a solution $\epsilon_0$ to this equation, and the statement holds for $\epsilon > \epsilon_0$.
\end{proof}
\begin{rmk}
	In practice, for $\delta$ small enough, we can approximate $\epsilon_0$ as the root to the equation
	\begin{equation}\label{eq:eps0}
	\left(\frac{R^2}{R_d^2} - \frac{\ln R_d}{\ln R} \right) = \frac{1}{I(\omega^*,0)}.
	\end{equation}
\end{rmk}

\section{Numerical illustrations} \label{sec:num}

\subsection{Implementation}
\subsubsection{Discretization of the operator}
The operator $\mathcal{M}^{\epsilon,\delta,\alpha_1}(\omega)$ was approximated as a matrix $M(\omega)$ using the truncated Fourier basis $e^{-iN\theta}$, $e^{-i(N-1)\theta}, \ldots, e^{iN\theta}$. We refer to \cite{defectSIAM,defectX} for the details of the discretization. The integral over $Y^*$ in \eqnref{eq:Mdensity} was approximated using the trapezoidal rule with $100$ discretization points. The characteristic value problem for $\mathcal{M}^{\epsilon,\delta,\alpha_1}(\omega)$ was formulated as the root-finding problem $\det M(\omega) = 0$ and solved using Muller's method \cite{MaCMiPaP}.

\subsubsection{Evaluation of the asymptotic formula}
The integral over $Y^*$ in equation \eqnref{eq:dilute} was approximated using the trapezoidal rule with $100$ discretization points. Again, the equation was numerically solved using  Muller's method.

\subsection{Dilute regime}
Figure \ref{fig:bandgap_dilute} shows the unperturbed band structure and the defect band for $\alpha_1$ over the Brillouin zone $[0,2\pi]$. The material parameters were chosen as $\kappa_b = \rho_b = 1$, $\kappa_w = \rho_w = 5000$, $R=0.05$ and $\epsilon = -0.2R$. It can be seen that the entire defect band is located inside the deep subwavelength regime of the bandgap. Moreover, the defect frequencies computed using the asymptotic formula agree well with the values computed by discretizing the operator $\mathcal{M}^{\epsilon,\delta,\alpha_1}$. Also, we see that the defect band is not flat. In summary, these results show that the defect crystal supports guided modes in the subwavelength regime, localized to the line defect.

\begin{figure*}[tbh]
	\begin{center}
		\includegraphics[height=5.0cm]{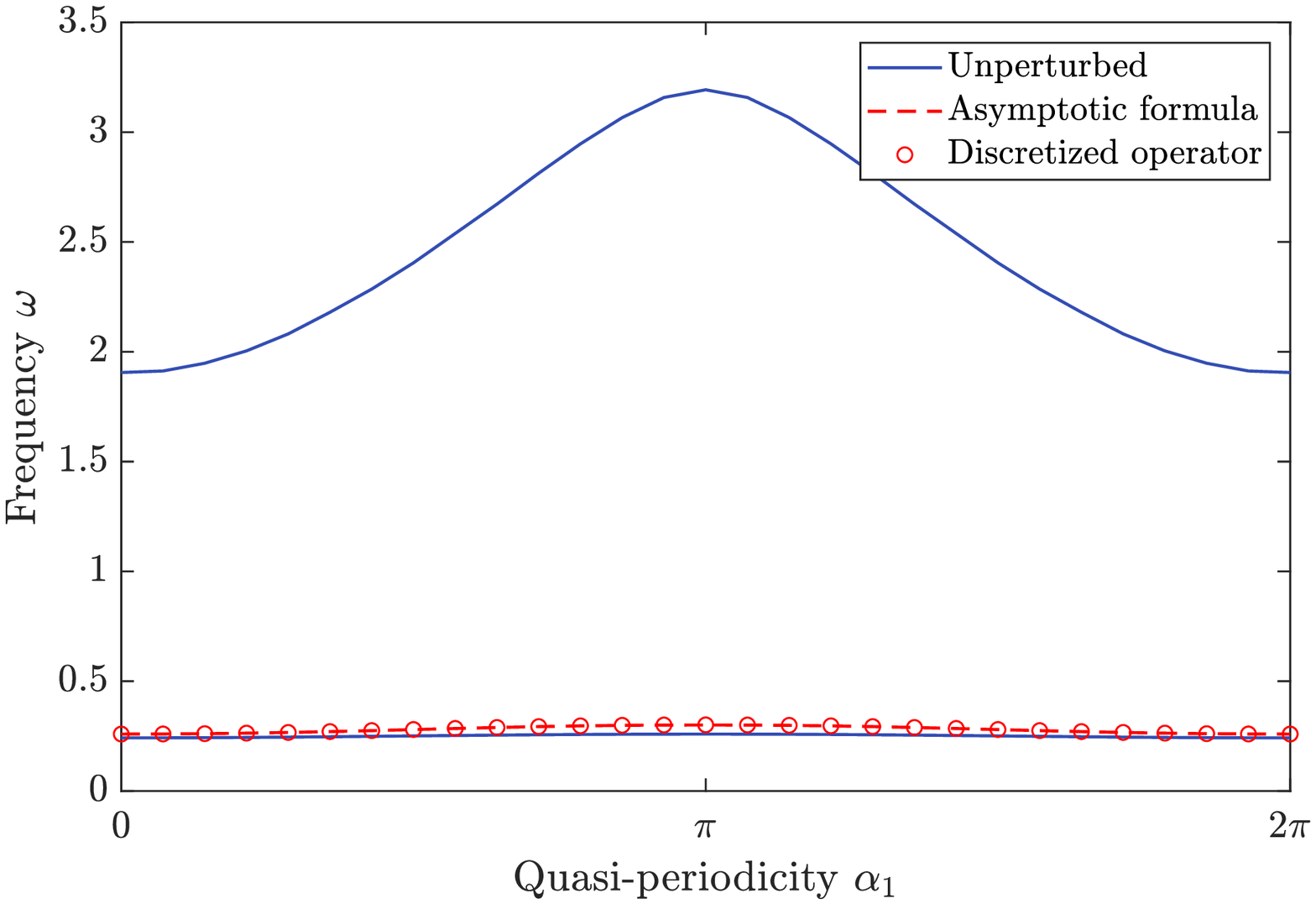}
		\hspace{0.1cm}
		\includegraphics[height=5.0cm]{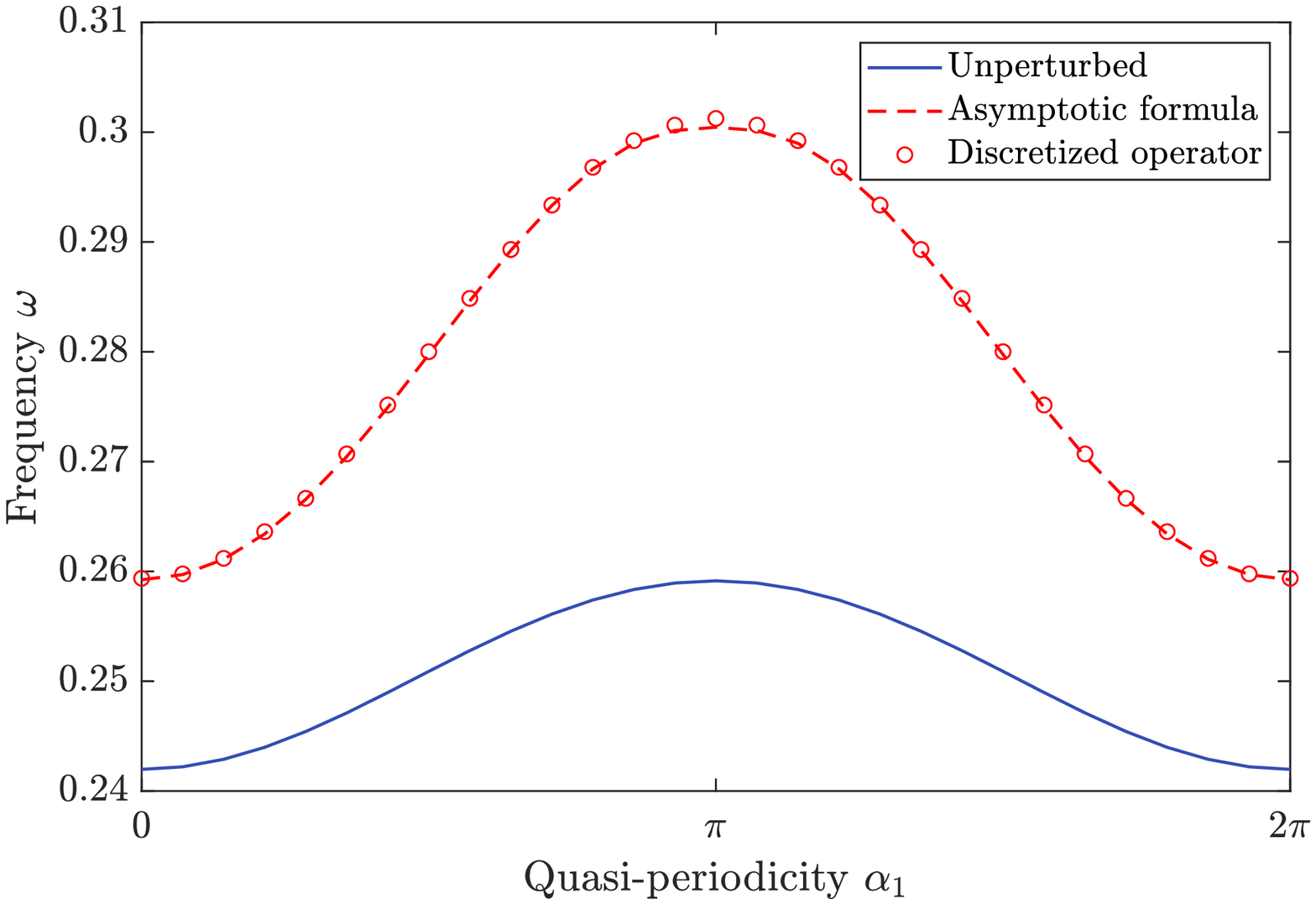}
		\caption{ (Dilute case)
			First two bands of the unperturbed crystal (left) and magnification of the first band and the defect mode (right). The defect band is computed using the asymptotic formula \eqnref{eq:dilute} (red dashed) and by discretizing the operator $\mathcal{M}^{\epsilon,\delta,\alpha_1}$ (red circles). The crystal bubble radius was $R=0.05$ and $\epsilon = -0.2R$.} 
		\label{fig:bandgap_dilute}
	\end{center}
\end{figure*}

\subsubsection{Computation of $\epsilon_0$}
In this section, we numerically compute the critical perturbation size, where the entire defect band is located in the bandgap. The critical perturbation size was computed in two ways: by solving equation \eqnref{eq:eps0} for the leading order term, and by solving the root-finding problem $\omega^{\epsilon_0}(0) = \omega^*$ where $\omega^\epsilon$ was computed by discretizing the operator $\mathcal{M}^{\epsilon,\delta,\alpha_1}$.

Figure \ref{fig:eps0} shows $\epsilon_0$ for different $R$ in the dilute regime.  The material parameters were chosen as $\kappa_b = \rho_b = 1$ and $\kappa_w = \rho_w = 10000$. The values obtained from the asymptotic formula and by discretizing the operator agree, with a smaller radius $R$ giving a smaller error.
Quantitatively, for $R$ in this regime, we require a
decrease of the bubble size by around $14\%$ to $26\%$
in order that the defect band be located inside the
bandgap.

\begin{figure*}[tbh]
	\begin{center}
		\includegraphics[height=5cm]{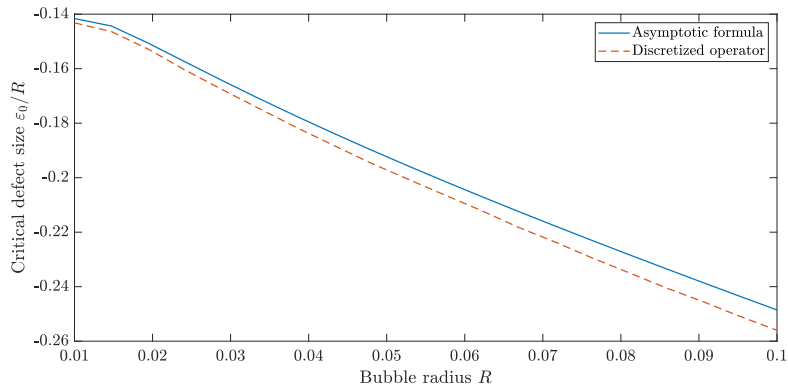}
		\caption{Critical defect size $\epsilon_0$, \ie{} the smallest defect size where corresponding defect band is entirely located inside the bandgap, as a function of the crystal bubble radius.}
		\label{fig:eps0}
	\end{center}
\end{figure*}

\subsection{Non-dilute regime}
Here we compute the defect band in the non-dilute regime, in both cases $\epsilon < 0$ and  $\epsilon > 0$, corresponding to smaller and larger defect bubbles, respectively. Theorem \ref{thm:first} in Appendix \ref{app:nondil} shows that there is a defect frequency $\omega^\epsilon$ in the bandgap for small $\epsilon > 0$ but not for small $\epsilon < 0$. 

\subsubsection{Larger defect bubbles}
Figure \ref{fig:bandgap_ndilute} shows the band structure in the non-dilute case with $\epsilon > 0$. The material parameters were chosen as $\kappa_b = \rho_b = 1$, $\kappa_w = \rho_w = 5000$, $R=0.4$ and $\epsilon = 0.45R$.  As expected from Theorem \ref{thm:first}, there is a defect band above the first band of the unperturbed crystal. Moreover, it is possible to position the entire band inside the bandgap. 

\begin{figure*}[tbh]
	\begin{center}
		\includegraphics[height=5.0cm]{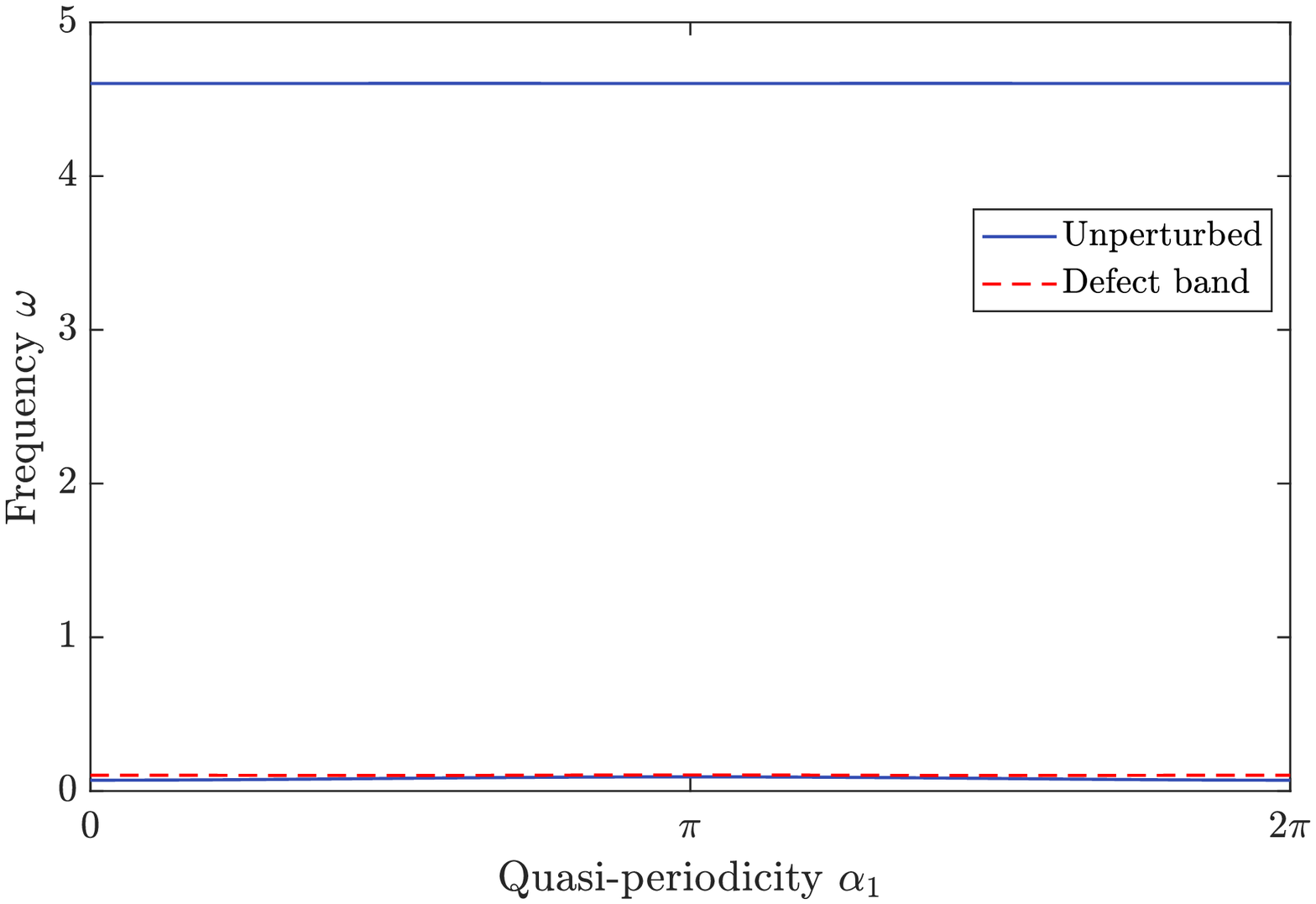}
		\hspace{0.1cm}
		\includegraphics[height=5.0cm]{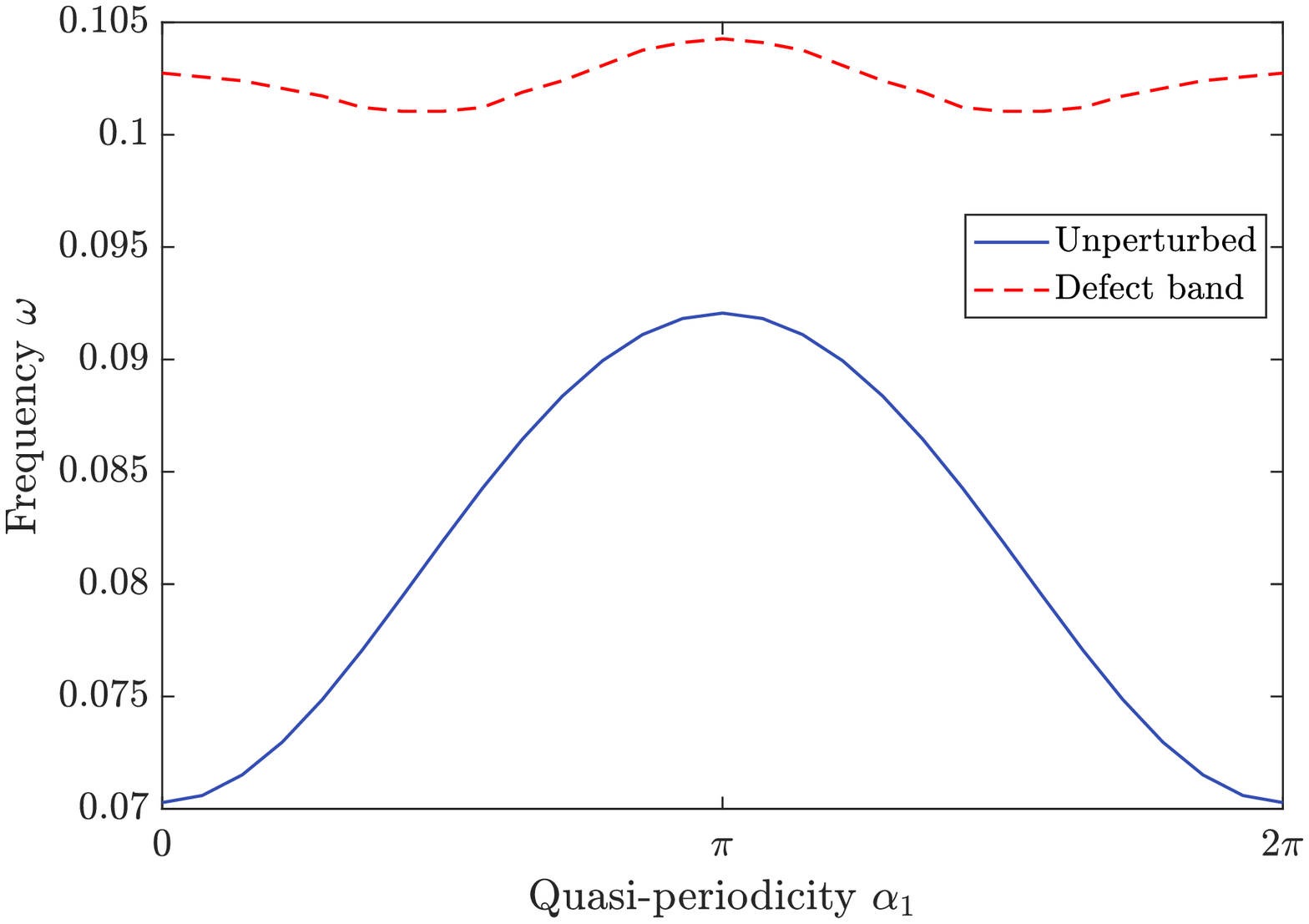}
		\caption{ (Non-dilute case)
			First two bands of the unperturbed crystal (left) and magnification of the first band and the defect mode (right). The defect band was computed by discretizing the operator $\mathcal{M}^{\epsilon,\delta,\alpha_1}$. The crystal bubble radius was $R=0.4$ and $\epsilon = 0.45R$, corresponding to a non-dilute case with larger defect bubbles.} 
		\label{fig:bandgap_ndilute}
	\end{center}
\end{figure*}

\begin{figure*} [tbh]
	\begin{center}
		\includegraphics[height=5.0cm]{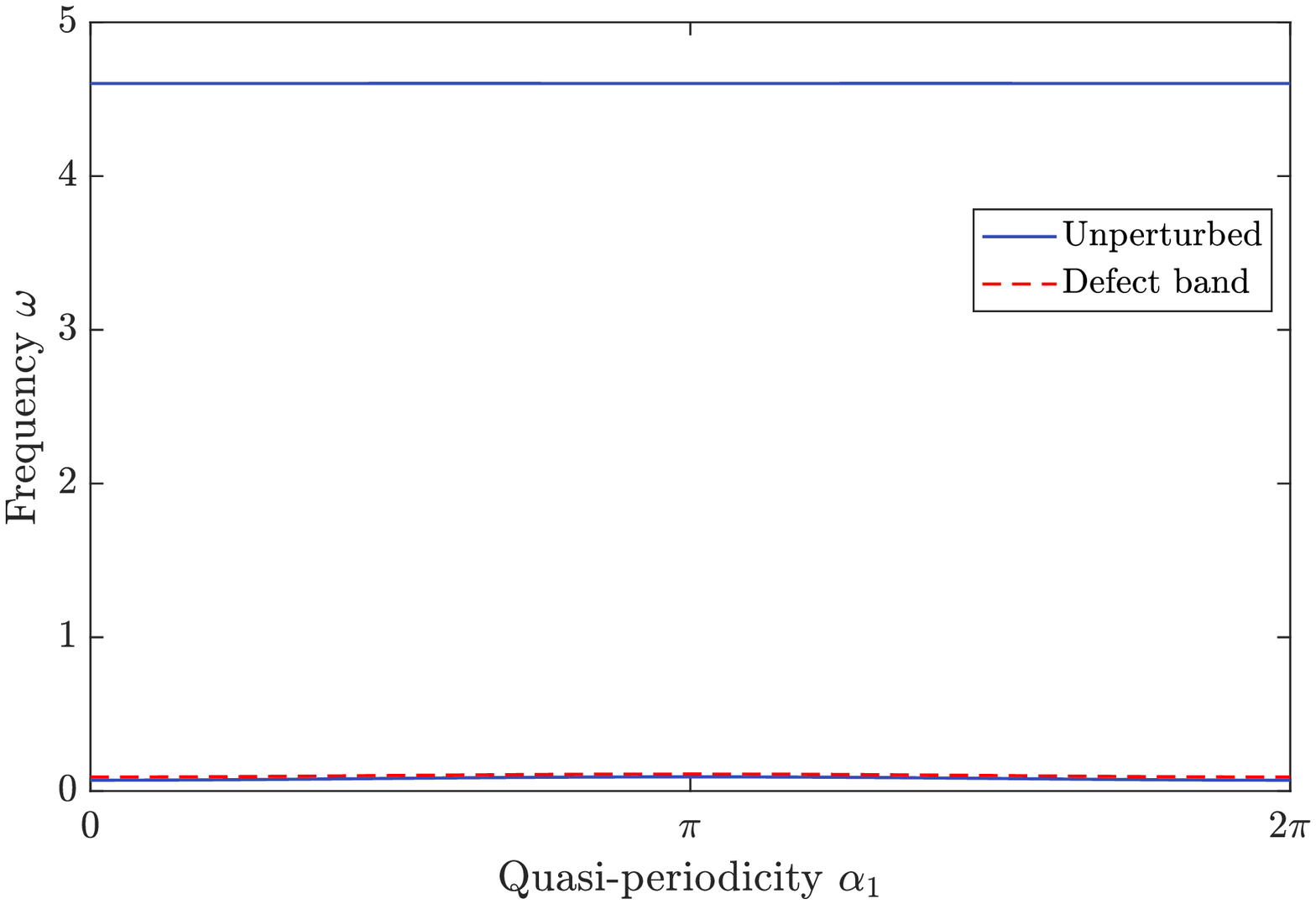}
		\hspace{0.1cm}
		\includegraphics[height=5.0cm]{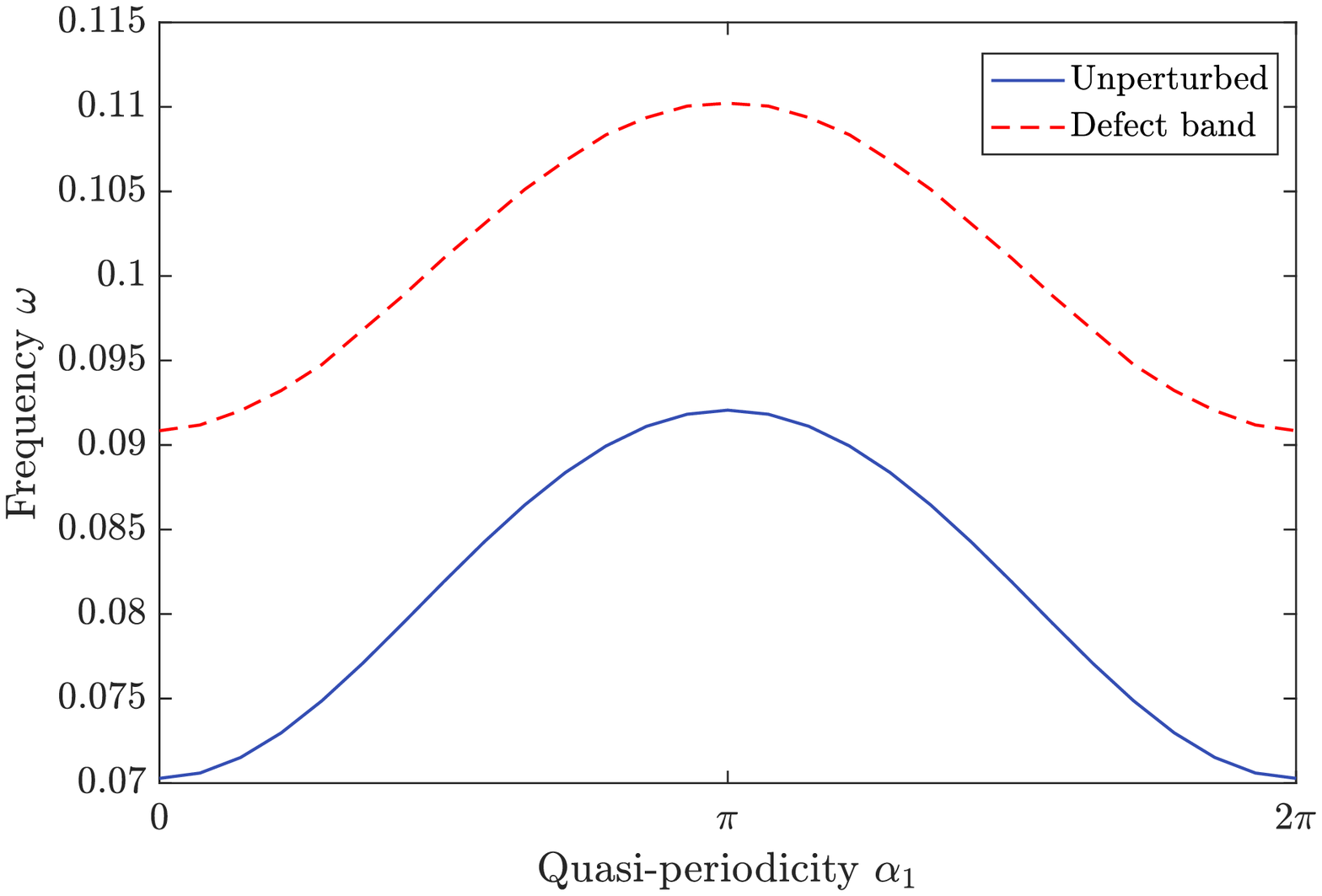}
		\caption{ (Non-dilute case)
			First two bands of the unperturbed crystal (left) and magnification of the first band and the defect mode (right). The defect band was computed by discretizing the operator $\mathcal{M}^{\epsilon,\delta,\alpha_1}$. The crystal bubble radius was $R=0.4$ and $\epsilon = -0.6R$, corresponding to a non-dilute case with smaller defect bubbles.} 
		\label{fig:bandgap_ndilute2}
	\end{center}
\end{figure*}

\subsubsection{Smaller defect bubbles}
Figure \ref{fig:bandgap_ndilute2} shows the band structure in the non-dilute case with $\epsilon = -0.6R$. The material parameters were chosen as $\kappa_b = \rho_b = 1$, $\kappa_w = \rho_w = 5000$, $R=0.4$ and $\epsilon = 0.45R$. In this case a defect band is present inside the bandgap. Here $\epsilon$ is quite large, in contrast to Theorem \ref{thm:first} which is only valid for small $\epsilon$.

\section{Concluding remarks} \label{sec-5}
In this paper, we have for the first time proved the possibility of creating subwavelength guided waves localized to a line defect in a bubbly phononic crystal. We have shown that introducing a defect line, by shrinking the bubbles along the line, creates a defect frequency band inside the bandgap of the original crystal. An arbitrarily small perturbation will create a non-zero overlap between the defect band and the bandgap, and we have explicitly quantified the required defect size in order to position the entire defect band inside the bandgap. Moreover, we have shown for the first time that the defect band is not contained in the pure point spectrum of the perturbed operator. This shows that we can create truly guided modes, which are not localized in the direction of the defect. In the future, we plan to study more sophisticated waveguides, with bends and junctions. Moreover, we also plan to study waveguides in phononic subwavelength bandgap crystals with non-trivial topology, rigorously proving the existence of topologically protected subwavelength states in bubbly crystals.

\appendix

\section{The resonance frequency of the defect mode for small perturbations} \label{app:nondil}
Here we derive a formula for the resonance frequency of the defect mode in the case of small $\epsilon$, following the approach of \cite{defectSIAM,defectX}. The strength of this approach is that it is valid in both the dilute and non-dilute regimes. We begin by reformulating the integral equation \eqnref{eq:Mdensity} in terms of the effective sources $(f, g)$ instead of the layer densities $(\phi,\psi)$. The following proposition is a restatement of Proposition \ref{prop:fg}.

\begin{prop}\label{prop:fg'}
	The effective source pair $(f,g)\in L^2(\partial D)^2$ satisfies the following integral equation:
	\begin{align}\label{eq:fg'}
	\mathcal{M}^{\epsilon,\delta,\alpha_1}(\omega)\begin{pmatrix}
	f
	\\
	g
	\end{pmatrix}:=\bigg(I+(\A_{D}^\epsilon(\omega,\delta) - \A_{D}(\omega,\delta))\frac{1}{2\pi}\int_{Y^*}\A^{(\alpha_1,\alpha_2)}(\omega,\delta)^{-1} \dx \alpha_2\bigg)
	\begin{pmatrix}
	f 
	\\[0.3em]
	g
	\end{pmatrix}
	=
	\begin{pmatrix}
	0
	\\[0.3em]
	0
	\end{pmatrix},
	\end{align}
	for small enough $\delta$ and for $\omega \notin \Lambda_{0,\alpha_1}$ inside the bandgap.
\end{prop}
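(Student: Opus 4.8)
The plan is to read off \eqnref{eq:fg'} directly from Propositions \ref{prop:effective} and \ref{prop:floquet}, substituting the two relations in the order opposite to the one used to obtain Proposition \ref{prop:fg}. Throughout I would fix $\delta$ small and $\omega \notin \Lambda_{0,\alpha_1}$ inside the bandgap, so that $\A^{(\alpha_1,\alpha_2)}(\omega,\delta)$ is invertible for every $\alpha_2 \in Y^*$ by \cite{bandgap} and the Floquet inversion underlying Proposition \ref{prop:floquet} is legitimate; for brevity write
\[
\B := \A_D^\epsilon(\omega,\delta) - \A_D(\omega,\delta), \qquad
\mathcal{T}_{\alpha_1} := \frac{1}{2\pi}\int_{Y^*}\A^{(\alpha_1,\alpha_2)}(\omega,\delta)^{-1}\dx\alpha_2,
\]
both bounded operators on $L^2(\p D)^2$.

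First I would record the two identities supplied by the cited propositions. Proposition \ref{prop:effective} gives $(f,g)^\top = \B\,(\phi,\psi)^\top$, and Proposition \ref{prop:floquet} gives $(\phi,\psi)^\top = -\mathcal{T}_{\alpha_1}(f,g)^\top$. Substituting the second into the first yields $(f,g)^\top = -\B\,\mathcal{T}_{\alpha_1}(f,g)^\top$, that is,
\[
(I + \B\,\mathcal{T}_{\alpha_1})\begin{pmatrix} f \\ g \end{pmatrix} = \begin{pmatrix} 0 \\ 0 \end{pmatrix},
\]
which is exactly the integral equation \eqnref{eq:fg'}; substituting in the reverse order reproduces $(I + \mathcal{T}_{\alpha_1}\B)(\phi,\psi)^\top = 0$, \ie{} \eqnref{eq:Mdensity}. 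This gives the stated equation under exactly the hypotheses of Propositions \ref{prop:effective} and \ref{prop:floquet}.

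To make precise the sense in which \eqnref{eq:fg'} is a restatement of Proposition \ref{prop:fg} --- namely that $\omega$ is a characteristic value of the operator in \eqnref{eq:fg'} exactly when it is one of the operator in \eqnref{eq:Mdensity} --- I would note that the kernels of $I + \B\,\mathcal{T}_{\alpha_1}$ and $I + \mathcal{T}_{\alpha_1}\B$ are in bijective correspondence: if $(\phi,\psi)^\top \neq 0$ lies in $\ker(I + \mathcal{T}_{\alpha_1}\B)$ then $(f,g)^\top := \B(\phi,\psi)^\top$ lies in $\ker(I + \B\,\mathcal{T}_{\alpha_1})$, and $(f,g)^\top \neq 0$ since $-\mathcal{T}_{\alpha_1}(f,g)^\top = (\phi,\psi)^\top \neq 0$; the converse is symmetric, with $\mathcal{T}_{\alpha_1}$ playing the role of $\B$. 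Hence the two formulations have the same characteristic values, which is all that is needed for the subsequent analysis in the appendix. I do not expect a genuine obstacle in this proposition: it is a bookkeeping step, and the only point requiring attention is tracking the range of validity --- the bandgap restriction on $\omega$ and the smallness of $\delta$ --- so that $\A^{(\alpha_1,\alpha_2)}(\omega,\delta)^{-1}$ under the integral sign, and hence $\mathcal{T}_{\alpha_1}$, are well defined. The real analytic work, namely extracting $\omega^\epsilon$ from \eqnref{eq:fg'}, is postponed to the asymptotic expansion that follows.
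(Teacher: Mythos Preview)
Your argument is correct and matches the paper's own treatment: the paper introduces Proposition~\ref{prop:fg'} simply as ``a restatement of Proposition~\ref{prop:fg}'' with no separate proof, and the derivation is precisely the substitution you give, combining Propositions~\ref{prop:effective} and~\ref{prop:floquet} in the opposite order. Your additional observation that the kernels of $I+\B\,\mathcal{T}_{\alpha_1}$ and $I+\mathcal{T}_{\alpha_1}\B$ are in bijection (so the two formulations share characteristic values) is not spelled out in the paper but is exactly the content of the word ``restatement''.
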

In this section, we derive an expression for the characteristic value $\omega^\epsilon$ of  $\mathcal{M}^{\epsilon,\delta,\alpha_1}(\omega)$ located slightly above $\omega^\alpha$ for both the dilute and non-dilute regimes.

Let us first analyse the operator $\int_{Y^*} (\A^{\alpha})^{-1}\dx\alpha$.
Since $\omega^\alpha$ is a simple pole of the mapping $\omega \mapsto \A^\alpha(\omega,)^{-1}$ in a neighbourhood of $\omega^\alpha$,
according to \cite{MaCMiPaP}, we can write
\begin{equation} \label{eq:polepencil}
\A^\alpha(\omega)^{-1} = \frac{\L^\alpha}{\omega- \omega^\alpha} + \mathcal{R}^\alpha(\omega),
\end{equation}
where the operator-valued function $\mathcal{R}^\alpha(\omega)$ is holomorphic in a neighbourhood of $\omega^\alpha$, and the operator $\L^\alpha$ maps $L^2(\p D)^2$ onto $\ker \A^\alpha(\omega^\alpha,\delta)$. 
Let us write
$$
\ker {\mathcal{A}^\alpha(\omega^\alpha)} = \mbox{span} \{\Psi^\alpha\}, \quad \ker {\big(\mathcal{A}^\alpha(\omega^\alpha)\big)^*} = \mbox{span} \{\Phi^\alpha\},
$$
where $^*$ denotes the adjoint operator. Then, as in \cite{MaCMiPaP, thinlayer}, it can be shown that 
$$
\L^\alpha = \frac{\langle \Phi^\alpha,\ \cdot\ \rangle \Psi^\alpha}{\langle \Phi^\alpha, \frac{d}{d \omega}\A^\alpha\big|_{\omega=\omega^\alpha} \Psi^\alpha \rangle},
$$
where again $\langle \,\cdot \,,\,\cdot\, \rangle$ stands for the standard inner product of $L^2(\partial D)^2$.

Hence the operator $\mathcal{M}^{\epsilon,\delta,\alpha_1}$  can be decomposed as
$$
\mathcal{M}^{\epsilon,\delta,\alpha_1}(\omega) = I + (\A_{D}^\epsilon - \A_D) \frac{1}{2\pi}\int_{Y^*}\frac{\L^\alpha}{\omega-\omega_\alpha}\dx \alpha_2 +  (\A_{D}^\epsilon - \A_D) \frac{1}{2\pi}\int_{Y^*} \mathcal{R}_\alpha \dx\alpha_2.
$$
Note that the third term in the right-hand side is holomorphic with respect to $\omega$.

Denote by $\alpha^* = (\alpha_1,\pi)$ and $\omega^*(\alpha_1) = \omega^{(\alpha_1,\pi)}$.
Using similar arguments as in \cite{highfrequency} and the fact that each bubble is a circular disk, we can prove the following result on the shape of the dispersion relation close to $\alpha^*$. 
\begin{lem} \label{lem:max_omegaalp}
	For a fixed $\alpha_1 \in Y^*$, the characteristic value $\omega^\alpha$ attains its maximum over $\alpha_2$ at $\alpha_2 = \pi$, \ie{} at $\alpha=\alpha^*$. Moreover, for $\alpha_2$ near $\pi$, we have
	$$
	\omega^\alpha = \omega^*(\alpha_1) - \frac{1}{2}c_\delta(\alpha_1) (\alpha_2-\pi)^2   + o\left((\alpha_2-\pi)^2\right).
	$$
	Here, $c_\delta(\alpha_1)$ is a positive function of $\alpha_1$ and $\delta$.
\end{lem}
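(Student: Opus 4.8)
The plan is to reduce the statement to the $\alpha_2$-dependence of the leading-order term furnished by Theorem \ref{approx_thm}, use the reflection symmetry of the crystal to locate the critical points and fix the form of the expansion, and then identify $\alpha_2=\pi$ as the global maximum by a lattice-sum estimate in the spirit of \cite{highfrequency}. First recall that for $\alpha\neq 0$ the first band $\omega^\alpha=\omega_1^\alpha$ is a \emph{simple} characteristic value of $\A^\alpha(\cdot,\delta)$ — it is a simple pole of $\omega\mapsto\A^\alpha(\omega,\delta)^{-1}$, as already used in \eqnref{eq:polepencil} — so by analytic perturbation theory and Gohberg-Sigal theory it depends analytically on $\alpha_2$ wherever the band stays simple, in particular in a neighbourhood of $\alpha_2=\pi$; this legitimizes the Taylor expansion with an $o((\alpha_2-\pi)^2)$ remainder once its shape is known. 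By Theorem \ref{approx_thm}, $\omega^\alpha=\sqrt{\delta\,\capacity_{D,\alpha}/|D|}+O(\delta^{3/2})$, which is analytic in $\alpha$ for $\alpha\neq0$, so to leading order the shape of $\alpha_2\mapsto\omega^\alpha$ is governed by $\alpha_2\mapsto\capacity_{D,\alpha}$.

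Next I would exploit the symmetry $x_2\mapsto-x_2$, under which the crystal of disks is invariant and which conjugates the Bloch problem at $(\alpha_1,\alpha_2)$ to the one at $(\alpha_1,-\alpha_2)$. Hence $\omega^{(\alpha_1,\alpha_2)}=\omega^{(\alpha_1,-\alpha_2)}$ exactly, and together with $2\pi$-periodicity in $\alpha_2$ this forces $\alpha_2\mapsto\omega^{(\alpha_1,\alpha_2)}$ to be even about both $\alpha_2=0$ and $\alpha_2=\pi$. In particular $\p_{\alpha_2}\omega^\alpha=0$ at $\alpha_2=\pi$ and the Taylor expansion there has no linear term, which already yields the asserted form with $c_\delta(\alpha_1):=-\,\p_{\alpha_2}^2\omega^\alpha|_{\alpha_2=\pi}$. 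It remains to prove that $\alpha_2=\pi$ is the global maximum over $\alpha_2\in Y^*$ and that $c_\delta(\alpha_1)>0$.

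For the location of the maximum, arguing as in the proof of Lemma \ref{lem:palpha1} (with the roles of $\alpha_1$ and $\alpha_2$ interchanged in the symmetric summation) one sees that the only critical points of $\alpha_2\mapsto\capacity_{D,\alpha}$ on $[0,2\pi)$ are $\alpha_2=0$ and $\alpha_2=\pi$; hence this function is monotone on $[0,\pi]$ and it suffices to compare its two endpoint values. In the dilute regime one can use the explicit leading-order identity $(\omega^\alpha)^2=-2\delta\big/\!\big(R^2\ln R+2\pi R^3\mathcal{R}_\alpha(0)\big)+\cdots$, with $\mathcal{R}_\alpha=\Gamma^{\alpha,0}-\Gamma^0$ (as in the proofs of Theorem \ref{thm:dilute} and Proposition \ref{prop:palpha1}), which — since $R^2\ln R$ dominates the denominator for small $R$ — shows that $\omega^\alpha$ increases with $\mathcal{R}_\alpha(0)$ and reduces the comparison to $\mathcal{R}_{(\alpha_1,\pi)}(0)>\mathcal{R}_{(\alpha_1,0)}(0)$. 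This last inequality I would prove by integrating the convergent series for $\p_{\alpha_2}\mathcal{R}_{(\alpha_1,t)}(0)$ over $t\in(0,\pi)$ and pairing the lattice points $(m_1,m_2)$ with $(m_1,-m_2)$: the dominant terms (those with $m_2=0$) give a strictly positive contribution, while the remaining terms give a controllably small opposite contribution. For the general, possibly non-dilute disk case the same comparison, together with the strict inequality $\p_{\alpha_2}^2\omega^\alpha|_{\alpha_2=\pi}<0$ (hence $c_\delta(\alpha_1)>0$), would be obtained from the expansion of $\capacity_{D,\alpha}$ in lattice sums via Graf's addition theorem, following the method of \cite{highfrequency}; positivity and continuity of $c_\delta$ in $\alpha_1$ and $\delta$ then follow from the same analytic dependence.

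I expect the main obstacle to be this sign determination — showing that $\alpha_2=\pi$ is the maximum and not the minimum — carried out uniformly in $\alpha_1\in Y^*$ (and, in the non-dilute case, uniformly in the volume fraction of the disk). Term by term the relevant lattice sum, and its second derivative at $\alpha_2=\pi$, has no fixed sign, because $w\mapsto w/(p+w^2)^2$ fails to be monotone on $(0,\infty)$; one therefore has to isolate a few dominant lattice terms and control the remainder by a tail estimate rather than argue entry by entry. A secondary, routine point is to confirm that the first band stays simple along $\{\alpha_1\}\times Y^*$ near $\alpha_2=\pi$, so that $\omega^\alpha$ is genuinely analytic there and the stated expansion is valid.
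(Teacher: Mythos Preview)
The paper does not actually prove this lemma; it only asserts that it follows ``using similar arguments as in \cite{highfrequency} and the fact that each bubble is a circular disk.'' Your proposal is precisely a fleshed-out version of that hint: exploit the $x_2\mapsto-x_2$ symmetry of the disk crystal to obtain evenness of $\alpha_2\mapsto\omega^\alpha$ about $\alpha_2=\pi$, reduce via Theorem~\ref{approx_thm} to the $\alpha_2$-behaviour of $\capacity_{D,\alpha}$ (equivalently of $\mathcal{R}_\alpha(0)$ in the dilute regime), and invoke the lattice-sum analysis of \cite{highfrequency} for the sign determination that fixes $\alpha_2=\pi$ as the maximum and $c_\delta(\alpha_1)>0$. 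You have correctly identified the only genuine obstacle---the uniform sign of the second derivative at $\alpha_2=\pi$---and the split you make between an explicit dilute argument and a general case deferred to \cite{highfrequency} is exactly in the spirit of the paper.
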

The operator $\int_{Y^*}\frac{\L^\alpha}{\omega-\omega^\alpha}\dx \alpha_2$ becomes singular when $\omega\rightarrow \omega^\alpha$. Moreover, since we want to compute the defect band inside the bandgap of the periodic problem at $\alpha_1$, we can assume $\omega$ is inside this bandgap. Consequently, the singularity occurs as $\omega\rightarrow \omega^*$. Let us extract its singular part explicitly. Denote by
$
\A^* = \A^{(\alpha_1,\pi)},  \Phi^* = \Phi^{(\alpha_1,\pi)},$ and $\L^* = \L^{(\alpha_1,\pi)}
$.
Moreover, denote by $B_j$ a bounded function with respect to $\omega$ in $V$. Then, by Lemma \ref{lem:max_omegaalp}, we have
\begin{align*}
\frac{1}{2\pi}\int_{Y^*}\frac{\L^{(\alpha_1,\alpha_2)}}{\omega-\omega^{(\alpha_1,\alpha_2)}} \dx\alpha_2 &= \frac{\L^*}{2\pi}\int_{0}^{2\pi} \frac{1}{\omega - \omega^*+\frac{1}{2} c_\delta(\alpha_1) (\alpha_2-\pi)^2} \dx\alpha_2 + B_1(\omega)
\\
&=\frac{\L^*}{2\pi}\sqrt{\frac{2}{(\omega-\omega^*)c_\delta(\alpha_1)}}2\arctan\left({\sqrt{\frac{c}{2(\omega-\omega^*)}}\pi}\right) + B_1(\omega)\\
&=\frac{\L^*}{\sqrt{2(\omega-\omega^*)c_\delta(\alpha_1)}} + B_2(\omega).
\end{align*}
We therefore get
\begin{align*}
\mathcal{M}^{\epsilon,\delta,\alpha_1}(\omega)=I + \frac{1}{\sqrt{2(\omega-\omega^*)c_\delta(\alpha_1)}} (\A_{D}^\epsilon(\omega^*)-\A_{D}(\omega^*))\mathcal{L}^*  + {\mathcal{R}^\epsilon(\omega)},
\end{align*}
for some $\mathcal{R}^\epsilon(\omega) = O(\epsilon)$ which is analytic and bounded for $\omega$ close to $\omega^*$. We look for characteristic values $\omega= \omega^\epsilon$ of $\mathcal{M}^{\epsilon,\delta,\alpha_1}(\omega)$, \ie{} values such that there exists some $\Psi^\epsilon\neq 0$ with ${\mathcal{M}^{\epsilon,\delta,\alpha_1}(\omega)}\Psi^\epsilon = 0$. Expanding this equation, we have
\begin{align*}
\Psi^\epsilon + \frac{1}{\sqrt{2(\omega^\epsilon-\omega^*)c_\delta(\alpha_1)}} \frac{ (\A_{D}^\epsilon-\A_{D})(\omega^*) \Psi^*}{\langle \Phi^*, \frac{d}{d \omega}\A^*\big|_{\omega=\omega^*} \Psi^* \rangle}\langle \Phi^*, \Psi^\epsilon \rangle + {\mathcal{R}^\epsilon(\omega)}\Psi^\epsilon =0.
\end{align*}
Then, multiplying by $\Phi^*$, we obtain
\begin{align*}
\langle \Phi^*, \Psi^\epsilon \rangle \left(1 + \frac{1}{\sqrt{2(\omega^\epsilon-\omega^*)c_\delta(\alpha_1)}} \frac{ \langle \Phi^*, (\A_{D}^\epsilon-\A_{D})(\omega^*) \Psi^*\rangle}{\langle \Phi^*, \frac{d}{d \omega}\A^*\big|_{\omega=\omega^*} \Psi^* \rangle}\right) + \langle \Phi^*, {\mathcal{R}^\epsilon(\omega)}\Psi^\epsilon\rangle =0.
\end{align*}
Since $\mathcal{R}^\epsilon(\omega) = O(\epsilon)$, it follows from the above equation that  $\langle \Phi^*, \Psi^\epsilon \rangle\neq0$. Therefore, choose $\Psi^\epsilon$ such that $\langle \Phi^*, \Psi^\epsilon \rangle = 1$. This gives
\begin{equation} \label{eq:exact}
\omega^\epsilon = \omega^* + \frac{1}{2c_\delta(\alpha_1)} \frac{1}{\left(1+\langle \Phi^*, {\mathcal{R}^\epsilon(\omega)}\Psi^\epsilon\rangle\right)^2} \left(\frac{\langle \Phi^*, (\A_{D}^\epsilon-\A_{D})(\omega^*) \Psi^*\rangle}{\langle \Phi^*, \frac{d}{d \omega}\A^*\big|_{\omega=\omega^*} \Psi^* \rangle}\right)^2.
\end{equation}
In order to derive a more explicit expression, we will consider the asymptotic limit of $\delta \rightarrow 0$. As in \cite{defectSIAM,defectX}, we have the following lemma.
\begin{lem}\label{lem:estim1} The following results hold:
	\begin{itemize}
		\item[(i)]When $\delta\rightarrow 0$, we have
		$$
		\big\langle \Phi^*, \frac{d}{d \omega}\A^*(\omega^*,\delta) \Psi^* \big\rangle = -2\pi{\omega^*\ln \omega^*} R^3 + O(\sqrt{\delta}),
		$$
		which is positive for $\delta$ small enough.
		\item[(ii)] For a fixed $\epsilon$, when $\delta\rightarrow 0$ we have
		\begin{align*}
		\left\langle\Phi^*,\big(\A_{D}^\epsilon(\omega^*,\delta)-\A_{D}(\omega^*,\delta)\big){ \Psi^*} \right\rangle &=\delta\epsilon \ln\omega^*\left(R\|\psi_{\alpha^*}\|^2_{L^2(\p D)}  - 2\capacity_{D,\alpha^*} \right) + O(\epsilon\delta + \epsilon^2\delta\ln\delta),
		\end{align*}
		where $\psi_{\alpha^*} = (\S_{D}^{\alpha^*,0})^{-1}[\chi_{\p D}]$ and $\capacity_{D,\alpha^*} = -\langle \psi_{\alpha^*}, \chi_{\p D}\rangle$. For small $\epsilon$ and $\delta$, this expression is positive for $\epsilon <0$ if $R$ is small enough, or $\epsilon >0$ for $R$ close enough to $1/2$.
	\end{itemize}
\end{lem}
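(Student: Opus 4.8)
The plan is as follows. Both quantities in the lemma are bilinear pairings built from the right null vector $\Psi^* \in \ker \A^*(\omega^*)$ and the left null vector $\Phi^* \in \ker (\A^*(\omega^*))^*$, where $\A^* = \A^{(\alpha_1,\pi)}$. Hence the first and most delicate step is to compute these two vectors asymptotically as $\delta \to 0$ (equivalently, as $\omega^* \to 0$). I would reuse the block decomposition of $\A^\alpha(\omega,\delta)$ from Section \ref{sec-3}, writing $\A^*(\omega^*) = A_0 - \delta \left(\begin{smallmatrix} 0 & 0 \\ 0 & P_{\alpha^*}\end{smallmatrix}\right) - \delta\left(\begin{smallmatrix} 0 & 0 \\ 0 & Q_{\alpha^*}\end{smallmatrix}\right) + O(\delta^3)$, and solve $\A^*(\omega^*)\Psi^* = 0$ componentwise. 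Since $\omega \mapsto \A^*(\omega)^{-1}$ has a simple pole at $\omega^*$, the kernel is one-dimensional, and at leading order it is concentrated on the monopole subspace $V_0$: writing $\Psi^* = (\varphi^*, \psi^*)$, one gets $\varphi^*$ essentially a multiple of $\chi_{\p D}$ and $\psi^*$ essentially a multiple of $\psi_{\alpha^*} = (\S_D^{\alpha^*,0})^{-1}[\chi_{\p D}]$, with $\omega^*$-dependent coefficients dictated by $\S_D^{\omega^*}[\chi_{\p D}] = (2\pi R\eta_{\omega^*} + R\ln R)\chi_{\p D} + O((\omega^*)^2\ln\omega^*)$ and by the low-frequency form of $-\tfrac12 I + \K_D^{\omega^*,*}$. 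The left null vector $\Phi^* = (\phi_1^*, \phi_2^*)$ is obtained the same way from $(\A^*)^*$, with $\phi_2^*$ a multiple of $\chi_{\p D}$ and $\phi_1^*$ a multiple of $\psi_{\alpha^*}$ at leading order (the quasi-periodicity $-\alpha^*$ that appears in the adjoint coincides with $\alpha^*$ up to errors that can be absorbed into the stated remainders). This parallels the point-defect computation carried out in \cite{defectSIAM, defectX}.

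For part (i), I would differentiate the explicit block form of $\A^\alpha(\omega,\delta)$ in $\omega$ (recall $k_b = k_w = \omega$). From \eqref{eq:Sexpansion} and $\eta_\omega = \tfrac{1}{2\pi}(\ln\omega + \gamma - \ln 2) - \tfrac{i}{4}$ one has $\tfrac{d}{d\omega}\S_D^\omega = \tfrac{1}{2\pi\omega}\langle\,\cdot\,, \chi_{\p D}\rangle\chi_{\p D} + O(\omega\ln\omega)$, which dominates; the derivatives of the $-\tfrac12 I + \K_D^{\omega,*}$, $\S_D^{\alpha,\omega}$ and $(\K_D^{-\alpha,\omega})^*$ blocks are $O(\omega\ln\omega)$ (and the last is moreover weighted by $\delta$), hence subdominant. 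Substituting $\Psi^*, \Phi^*$ from the first step and using $\langle\chi_{\p D},\chi_{\p D}\rangle_{L^2(\p D)} = 2\pi R$, all but one term drop out, leaving a quantity proportional to $\omega^*\ln\omega^*\, R^3$; tracking the constants gives $-2\pi\omega^*\ln\omega^*R^3 + O(\sqrt\delta)$, the $O(\sqrt\delta) = O(\omega^*)$ error coming from the subleading pieces of $\Psi^*, \Phi^*$ and of the block derivatives. Positivity for small $\delta$ is then immediate, since $\omega^* \to 0^+$ forces $\ln\omega^* < 0$.

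For part (ii), I would use the representation $\A_D^\epsilon - \A_D = \left(\begin{smallmatrix} 0 & E_1^\epsilon \\ 0 & E_2^\epsilon\end{smallmatrix}\right)$ from \eqref{eq:Apert} together with the Fourier-diagonal asymptotics of $E_1^\epsilon$ and $E_2^\epsilon$ computed just above \eqref{eq:E2}. Because the first operator column vanishes, $\langle\Phi^*, (\A_D^\epsilon - \A_D)\Psi^*\rangle = \langle\phi_1^*, E_1^\epsilon\psi^*\rangle + \langle\phi_2^*, E_2^\epsilon\psi^*\rangle$, so only $\psi^*$ enters and the computation reduces to pairing $\psi_{\alpha^*}$-type and $\chi_{\p D}$-type vectors through the diagonal operators $E_1^\epsilon, E_2^\epsilon$. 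Expanding $R_d = R + \epsilon$ to linear order ($\ln(R/R_d) = -\epsilon/R + O(\epsilon^2)$, $1 - R^2/R_d^2 = 2\epsilon/R + O(\epsilon^2)$, and $1 - R^{2|n|}/R_d^{2|n|} = 2|n|\epsilon/R + O(\epsilon^2)$ for $n \neq 0$) makes the $\epsilon$-dependence explicit; collecting the $V_0$ contribution together with the convergent sum over $n \neq 0$ produces the combination $R\|\psi_{\alpha^*}\|_{L^2(\p D)}^2 - 2\capacity_{D,\alpha^*}$ multiplied by $\delta\epsilon\ln\omega^*$, with error $O(\epsilon\delta + \epsilon^2\delta\ln\delta)$ coming respectively from the $\delta$-subleading corrections to $\Psi^*, \Phi^*, E_i^\epsilon$ and from the quadratic-in-$\epsilon$ remainders. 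For the sign statement I would insert the dilute asymptotics $\capacity_{D,\alpha^*} \sim -2\pi/\ln R$ and $\|\psi_{\alpha^*}\|^2 \sim \capacity_{D,\alpha^*}^2/(2\pi R)$, so that $R\|\psi_{\alpha^*}\|^2 - 2\capacity_{D,\alpha^*}$ is controlled by its linear term $-2\capacity_{D,\alpha^*}$ when $R$ is small but is overwhelmed by $R\|\psi_{\alpha^*}\|^2$ when $R$ is close to $1/2$ (where $\S_D^{\alpha^*,0}$ degenerates and $\|\psi_{\alpha^*}\|$ blows up); combined with the fixed signs of $\delta$ and $\ln\omega^*$ this yields the two stated sign regimes.

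The main obstacle is the first step: one must identify $\Psi^*$ and $\Phi^*$ to sufficiently high order, not merely at leading order, because in the pairing of part (ii) several leading contributions cancel — this is precisely what forces the two-scale error $O(\epsilon\delta + \epsilon^2\delta\ln\delta)$ rather than a single-term remainder. A secondary difficulty is the bookkeeping over the infinitely many Fourier modes $n \neq 0$ in part (ii): $E_1^\epsilon$ acts with the same leading coefficient $\sim -\epsilon$ on every mode while $\psi_{\alpha^*}$ and its adjoint partner have nonzero components there, so the $R\|\psi_{\alpha^*}\|^2$ term genuinely receives contributions from all modes and the resulting series must be summed — which is legitimate because the Fourier coefficients of $\psi_{\alpha^*}$ decay geometrically in $|n|$, beating the at most polynomial growth of the mode-wise coefficients of $E_i^\epsilon$. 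Once the two null vectors are pinned down, the remainder is careful but routine asymptotic algebra.
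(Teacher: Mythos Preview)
Your proposal is correct and follows the same approach as the paper. The paper itself gives no self-contained proof of this lemma but simply writes ``As in \cite{defectSIAM,defectX}, we have the following lemma,'' and the argument you outline---compute the right and left null vectors $\Psi^*,\Phi^*$ of $\A^{\alpha^*}(\omega^*,\delta)$ asymptotically in $\delta$, then pair them with $\tfrac{d}{d\omega}\A^{\alpha^*}$ for (i) and with the block $\left(\begin{smallmatrix}0&E_1^\epsilon\\0&E_2^\epsilon\end{smallmatrix}\right)$ for (ii), linearising in $\epsilon$---is exactly the computation carried out in those references for the point-defect case; your observation that $E_1^\epsilon \approx -\epsilon I$ uniformly across Fourier modes is what makes the $R\|\psi_{\alpha^*}\|^2$ term appear, and the rest is bookkeeping.
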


Combining equation \eqnref{eq:exact} and Lemma \ref{lem:estim1}, we obtain the following result

\begin{thm} \label{thm:first} 	Assume that $\delta$ is small enough and the pair $(R,\epsilon)$ satisfies one of the two assumptions
	\begin{itemize}
		\item[(i)] $R$ small enough and $\epsilon<0$ small enough in magnitude (Dilute regime),
		\item[(ii)] $R$ close enough to $1/2$ and $\epsilon>0$ small enough (Non-dilute regime).
	\end{itemize} 
	Then there exists one frequency value $\omega^\epsilon(\alpha_1)$ such that the problem \eqref{eq:scattering-strip} has a non-trivial solution and $\omega^\epsilon(\alpha_1)$ is slightly above $\omega^*(\alpha_1)$. Moreover, as $\delta \rightarrow 0$ we have
	\begin{equation} \label{eq:defect-freq}
	\omega^\epsilon(\alpha_1) = \omega^*(\alpha_1) + \frac{1}{2c_\delta(\alpha_1)}\left(\frac{\delta\epsilon\left(R\|\psi_{\alpha^*}\|^2_{L^2(\p D)}  - 2\capacity_{D,\alpha^*} \right)}{2\pi{\omega^*(\alpha_1)} R^3}\right)^2 + O\left(\frac{\epsilon^2\sqrt{\delta}}{\ln\delta} + \epsilon^3\sqrt{\delta}\right),
	\end{equation}
	where $\alpha^* = (\alpha_1, \pi)$.
\end{thm}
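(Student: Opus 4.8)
The statement will follow by combining the exact identity \eqref{eq:exact} with the asymptotic estimates of Lemma \ref{lem:estim1}, together with an existence argument for the characteristic value near $\omega^*$. I would proceed in three steps. First (existence and uniqueness), recall that $\mathcal{M}^{\epsilon,\delta,\alpha_1}(\omega) = I + \frac{1}{\sqrt{2(\omega-\omega^*)c_\delta(\alpha_1)}}(\A_D^\epsilon(\omega^*)-\A_D(\omega^*))\L^* + \mathcal{R}^\epsilon(\omega)$, where $(\A_D^\epsilon-\A_D)\L^*$ has rank one and $\mathcal{R}^\epsilon(\omega)=O(\epsilon)$ is analytic and bounded near $\omega^*$. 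Because the singular part has rank one, the condition $\mathcal{M}^{\epsilon,\delta,\alpha_1}(\omega)\Psi^\epsilon=0$ reduces, upon pairing with $\Phi^*$, to the scalar equation whose solution is exactly \eqref{eq:exact}; the right-hand side of \eqref{eq:exact} depends on $\omega=\omega^\epsilon$ only through the $O(\epsilon)$-small analytic quantity $\langle\Phi^*,\mathcal{R}^\epsilon(\omega)\Psi^\epsilon\rangle$. After the change of variable $\tau=\sqrt{\omega-\omega^*}$ — which removes the branch-point singularity — equation \eqref{eq:exact} becomes a fixed-point equation $\tau=\Upsilon(\tau)$ with $\Upsilon$ a contraction on a small disc about the value $\tau_0$ obtained by discarding the $O(\epsilon)$ correction, so there is a unique small solution $\omega^\epsilon$, and one checks it is a simple characteristic value. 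The sign information in Lemma \ref{lem:estim1} — that $\langle\Phi^*,\tfrac{d}{d\omega}\A^*\Psi^*\rangle>0$ and that $\langle\Phi^*,(\A_D^\epsilon-\A_D)(\omega^*)\Psi^*\rangle$ has the right sign under hypotheses (i) or (ii) — is precisely what guarantees that this $\omega^\epsilon$ is real and lies strictly above $\omega^*$, i.e.\ that it is the genuine branch and not a spurious root introduced by squaring in the passage to \eqref{eq:exact}.

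Second (insertion of the asymptotics), I would substitute the two expansions from Lemma \ref{lem:estim1} into \eqref{eq:exact}. Writing $C=R\|\psi_{\alpha^*}\|_{L^2(\p D)}^2-2\capacity_{D,\alpha^*}$, the ratio appearing in \eqref{eq:exact} becomes $\frac{\delta\epsilon\ln\omega^*\,C+O(\epsilon\delta+\epsilon^2\delta\ln\delta)}{-2\pi\omega^*\ln\omega^*R^3+O(\sqrt\delta)}$. The crucial point is that $\ln\omega^*$ occurs to the first power in both numerator and denominator and cancels, leaving $-\frac{\delta\epsilon C}{2\pi\omega^*R^3}\bigl(1+O(1/\ln\delta+\epsilon)\bigr)$. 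Squaring and using $1/(1+\langle\Phi^*,\mathcal{R}^\epsilon\Psi^\epsilon\rangle)^2=1+O(\epsilon)$ produces the leading term $\frac{1}{2c_\delta(\alpha_1)}\bigl(\frac{\delta\epsilon C}{2\pi\omega^*R^3}\bigr)^2$ of \eqref{eq:defect-freq}.

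Third (error bookkeeping), to obtain the stated error $O(\epsilon^2\sqrt\delta/\ln\delta+\epsilon^3\sqrt\delta)$ I would use two scaling facts: $\omega^*(\alpha_1)=O(\sqrt\delta)$ by Theorem \ref{approx_thm}, so that $\delta/\omega^*=O(\sqrt\delta)$, and $c_\delta(\alpha_1)=O(\sqrt\delta)$, which follows by differentiating $\omega^\alpha=\sqrt{\delta\capacity_{D,\alpha}/|D|}+O(\delta^{3/2})$ twice in $\alpha_2$, as already used in establishing Lemma \ref{lem:max_omegaalp}, so that $1/c_\delta=O(1/\sqrt\delta)$. Propagating the $O(\epsilon\delta+\epsilon^2\delta\ln\delta)$ and $O(\sqrt\delta)$ remainders through the cancellation of $\ln\omega^*$ and through the squaring yields an absolute error $O(\epsilon^2\delta/\ln\delta+\epsilon^3\delta)$ inside the bracket, and multiplying by $1/(2c_\delta)=O(1/\sqrt\delta)$ gives exactly $O(\epsilon^2\sqrt\delta/\ln\delta+\epsilon^3\sqrt\delta)$, which is \eqref{eq:defect-freq}.

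I expect the main obstacle to be the first step: making the existence argument rigorous despite the square-root branch point (so that Gohberg–Sigal theory for analytic operator pencils does not apply verbatim and one must either pass to the variable $\tau=\sqrt{\omega-\omega^*}$ or argue directly by a contraction on the scalar equation), and, above all, pinning down the correct sign so that \eqref{eq:exact} genuinely yields a real characteristic value above $\omega^*$ rather than an artifact of squaring — this is exactly where the sign assertions of Lemma \ref{lem:estim1}, and hence the hypotheses (i)–(ii) on the pair $(R,\epsilon)$, are used. Steps two and three are then routine, the only non-obvious simplification being the cancellation of the factor $\ln\omega^*$ between numerator and denominator.
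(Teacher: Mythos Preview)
Your proposal is correct and follows essentially the same approach as the paper: the paper's entire argument for this theorem is the single sentence ``Combining equation \eqref{eq:exact} and Lemma \ref{lem:estim1}, we obtain the following result,'' and your three steps are precisely the details one must supply to make that combination rigorous. In particular, your observation that the $\ln\omega^*$ factors cancel between numerator and denominator, your scaling $c_\delta(\alpha_1)=O(\sqrt\delta)$ via Theorem \ref{approx_thm}, and your use of the sign information in Lemma \ref{lem:estim1} to ensure $\omega^\epsilon>\omega^*$ are exactly the points the paper leaves implicit; your existence argument via the substitution $\tau=\sqrt{\omega-\omega^*}$ is a natural way to handle the branch point that the paper does not spell out.
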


\begin{rmk}
	It is easily verified that equation \eqnref{eq:dilute} evaluated for small $\epsilon$ coincides with equation \eqnref{eq:defect-freq} evaluated for small $R$.
\end{rmk}

\section{Characterization of the effective sources for non-circular bubbles} \label{app:noncirc}
Let now $D$ be a general simply connected domain with $\partial D \in C^1$. In this section, we will restrict to the case of small size perturbations $\epsilon <0 $. Define the defect bubble $D_d \in D$ as the domain with boundary

$$ \partial D_d = \{x + \epsilon\nu_x | x\in \partial D \},$$
where $\nu_x$ is the outward unit normal of $\partial D$ at $x\in\partial D$. We will need some results given in \cite{thinlayer}. First, we introduce some notation.
Define the mapping $p: \partial D  \rightarrow \partial D_d,  \ p(x) = x+\epsilon  \nu_x$. Let $x,y\in \partial D$ and let $\widetilde{x} = p(x) \in \partial D_d$ and $\widetilde{y} = p(y) \in \partial D_d$. Define $q: L^2( \partial D) \rightarrow L^2(\partial D_d), \ q(\phi)(\widetilde x) = \phi ( p^{-1}(\widetilde x)) $, and for a surface density $\phi$ on $\partial D$, define $\widetilde{\phi} = q(\phi)$ on $\partial D_d$. 

We also define the signed curvature $\tau = \tau(x), x\in \partial D$ in the following way. Let $x = x(t)$ be a parametrization of $\partial D$ by arc length. Then define $\tau$ by
$$
\frac{d^2}{dt^2}x(t) = -\tau \nu_x.
$$
Observe that $\tau$ is independent of the orientation of $\partial D$. The following results are given in \cite{thinlayer}, but adjusted to the case where $\epsilon <0$.
\begin{prop}\label{prop:asympsingle}
	Let $k>0$. 	Let $\phi \in L^2(\partial D)$ and let $x,y,\widetilde{x},\widetilde{y},\widetilde{\phi}$ be as above. Then 
	\begin{equation} \label{eq:asympSdD}
	\S_{D_d,D}^{k}[\phi](\widetilde{x}) = \S_D^k[\phi](x) +\epsilon \left(-\frac{1}{2}I + \K_D^{k,*}\right)[\phi](x) + o(\epsilon),
	\end{equation}
	\begin{equation} \label{eq:asympSd}
	\S_{D_d}^k[\widetilde{\phi}](\widetilde{x}) = \S_D^k[\phi](x) + \epsilon \left(\K_D^k + \K_D^{k,*}\right)[\phi](x) + \epsilon\S_D^k[\tau\phi](x) + o(\epsilon),
	\end{equation}
	\begin{equation} \label{eq:asympSDd}
	\S_{D,D_d}^k[\widetilde{\phi}](x) = \S_D^k[\phi](x) + \epsilon \left(-\frac{1}{2}I+ \K_D^k \right)[\phi](x) + \epsilon\S_D^k[\tau\phi](x) + o(\epsilon).
	\end{equation}
\end{prop}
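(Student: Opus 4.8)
The plan is to transport the three left-hand sides to the fixed boundary $\p D$ via the diffeomorphism $p(x)=x+\epsilon\nu_x$ and to expand the surface measure and the kernel $\Gamma^k$ to first order in $\epsilon$; these are the $\epsilon<0$ analogues of the thin-layer estimates of \cite{thinlayer}, so the work is to perform that expansion with careful sign bookkeeping and to pin down the curvature term dictated by the normalisation $x''=-\tau\nu$. As a preliminary, parametrising $\p D$ by arc length and differentiating $\nu\cdot\nu=1$, $x'\cdot\nu=0$ together with $x''=-\tau\nu$ gives $\nu'=\tau x'$, hence $\tfrac{d}{dt}p(x(t))=(1+\epsilon\tau)x'$; consequently, for $|\epsilon|$ small the unit normal of $\p D_d$ at $\widetilde x=p(x)$ equals $\nu_x$ and $\dx\sigma(\widetilde y)=(1+\epsilon\tau(y))\,\dx\sigma(y)$. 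Since $\epsilon<0$, $\widetilde x$ lies inside $D$ at signed distance $\epsilon$ from $x$ along $\nu_x$, while $x$ lies outside $D_d$. I would establish \eqref{eq:asympSdD}, \eqref{eq:asympSd}, \eqref{eq:asympSDd} in that order.

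For \eqref{eq:asympSdD} only the evaluation point moves: $\S_{D_d,D}^{k}[\phi](\widetilde x)=\S_D^k[\phi](x+\epsilon\nu_x)$, where $\S_D^k[\phi]$ denotes the single layer potential as a function on $\R^2$. Since $\S_D^k[\phi]$ is $C^{1,s}$ up to $\p D$ from inside $D$, a first-order Taylor expansion along $\nu_x$ gives $\S_D^k[\phi](x+\epsilon\nu_x)=\S_D^k[\phi](x)+\epsilon\,\p_\nu\S_D^k[\phi]\big|_{-}(x)+o(\epsilon)$ uniformly in $x$ (the remainder being $O(|\epsilon|^{1+s})$ by Hölder continuity of the interior gradient), and substituting the interior jump relation $\p_\nu\S_D^k\big|_{-}=-\tfrac12 I+\K_D^{k,*}$ from Section \ref{sec:layerpot} completes this case.

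The core is \eqref{eq:asympSd}. Changing variables, $\S_{D_d}^k[\widetilde\phi](\widetilde x)=\int_{\p D}\Gamma^k\big(p(x),p(y)\big)\,\phi(y)(1+\epsilon\tau(y))\,\dx\sigma(y)$, so everything hinges on expanding $\Gamma^k(p(x),p(y))$ in $\epsilon$, and this is the step I expect to be the main obstacle: a crude Taylor expansion produces an $O(\epsilon^2)$ term weighted by $|\nabla^2\Gamma^k|\sim|x-y|^{-2}$, which is not integrable on $\p D$. The remedy is to use that $\Gamma^k(x,y)=G(x-y)$ with $G(z)=-\tfrac{i}{4}H_0^{(1)}(k|z|)$ radial, so $\Gamma^k(p(x),p(y))=G\big((x-y)+\epsilon(\nu_x-\nu_y)\big)$, together with the Lipschitz bound $|\nu_x-\nu_y|\le C|x-y|$ from the regularity of $\p D$: then $|(x-y)+\epsilon(\nu_x-\nu_y)|=|x-y|(1+O(\epsilon))$ with the $O(\epsilon)$ uniform in $x,y$, each $j$-th derivative of $G$ (of size $|x-y|^{-j}$ up to logs) is exactly compensated, and Taylor's formula in $\epsilon$ holds with a remainder that is $o(\epsilon)$ times a locally integrable function of $|x-y|$. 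The first-order term equals $\epsilon\,\nabla G(x-y)\cdot(\nu_x-\nu_y)=\epsilon\big(\p_{\nu_x}\Gamma^k(x,y)+\p_{\nu_y}\Gamma^k(x,y)\big)$, whose integral against $\phi$ is $\epsilon(\K_D^{k,*}+\K_D^{k})[\phi](x)$, while the Jacobian factor supplies the remaining $\epsilon\,\S_D^k[\tau\phi](x)$.

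Finally, \eqref{eq:asympSDd} bootstraps from \eqref{eq:asympSd}. Writing $W=\S_{D_d}^k[\widetilde\phi]$ as a function on $\R^2$, the identity $x=\widetilde x-\epsilon\nu_x$, with $x$ outside $D_d$ and $\nu_x$ the outward normal of $\p D_d$ at $\widetilde x$, yields via a Taylor step and the exterior jump relation $\S_{D,D_d}^k[\widetilde\phi](x)=W(x)=\S_{D_d}^k[\widetilde\phi](\widetilde x)-\epsilon\big(\tfrac12 I+\K_{D_d}^{k,*}\big)[\widetilde\phi](\widetilde x)+o(\epsilon)$. Inserting \eqref{eq:asympSd} for the first term and using $\K_{D_d}^{k,*}[\widetilde\phi](\widetilde x)=\K_D^{k,*}[\phi](x)+o(1)$ (so the induced error stays $o(\epsilon)$) and $\widetilde\phi(\widetilde x)=\phi(x)$, the $\K_D^{k,*}$ contributions cancel and one is left with $\S_D^k[\phi](x)+\epsilon\big(-\tfrac12 I+\K_D^{k}\big)[\phi](x)+\epsilon\,\S_D^k[\tau\phi](x)+o(\epsilon)$. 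If only $C^1$ boundary regularity is available, the Lipschitz bound on the normal is replaced by its $o(1)$ version, which changes the modulus of continuity in the remainders but not the stated $o(\epsilon)$ conclusions; this is precisely the adjustment already carried out in \cite{thinlayer}, whose proofs otherwise transfer verbatim once the signs above are tracked.
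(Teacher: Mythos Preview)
The paper does not prove this proposition at all: it simply states that ``the following results are given in \cite{thinlayer}, but adjusted to the case where $\epsilon<0$'' and then quotes the three formulas. Your proposal is therefore not being compared against an in-paper proof but against a citation; what you have written is a correct sketch of precisely the argument carried out in \cite{thinlayer}, with the sign bookkeeping (interior vs.\ exterior trace in \eqref{eq:asympSdD} and \eqref{eq:asympSDd}) handled appropriately for $\epsilon<0$.
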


\begin{prop} \label{prop:asympK}
	Let $\phi \in L^2(\partial D)$ and let $x,y,\widetilde{x},\widetilde{y},\widetilde{\phi}$ be as above. Then 
	\begin{equation} \label{eq:asympK}
	\K_{D_d}^{k,*}[\widetilde{\phi}](\widetilde{x}) = \K_D^{k,*}[\phi](x) + \epsilon\K_1^k[\phi](x) + o(\epsilon),
	\end{equation}
	where $\K_1^k$ is given by
	\begin{equation*}
	\K_1^k = \K_D^{k,*}[\tau\phi](x) - \tau(x) \K_D^{k,*}[\phi](x)  + \frac{\partial \D_D^k}{\partial \nu}[\phi](x) - \frac{\partial^2}{\partial T^2}\S_D^k[\phi](x) - k^2\S_D^k[\phi](x).
	\end{equation*}
	Here $\frac{\partial^2}{\partial T^2}$ denotes the second tangential derivative, which is independent of the orientation of $\partial D$. 
\end{prop}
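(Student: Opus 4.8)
The plan is to pull $\K_{D_d}^{k,*}$ back to $\p D$ through the diffeomorphism $p(x)=x+\epsilon\nu_x$ and expand the resulting kernel to first order in $\epsilon$; this is the computation of \cite{thinlayer}, and since $\K_1^k$ contains no half-identity term the formula \eqref{eq:asympK} is insensitive to the sign of $\epsilon$ (only the one-sided traces appearing in the intermediate steps, and in Proposition \ref{prop:asympsingle}, switch to the interior ones for $\epsilon<0$). Write $\widetilde x=p(x)$, $\widetilde y=p(y)$, $\widetilde\phi=q(\phi)$. Parametrising $\p D$ by arc length and using $\frac{d^2}{dt^2}x(t)=-\tau\nu_x$ — hence $\frac{d}{dt}\nu_{x(t)}=\tau\,x'(t)$ — one gets $\frac{d}{dt}p(x(t))=\bigl(1+\epsilon\tau(t)\bigr)x'(t)$, so the unit tangent, and with it the outward normal, is unchanged along $p$, i.e. $\nu_{\widetilde x}=\nu_x$, while $\dx\sigma(\widetilde y)=\bigl(1+\epsilon\tau(y)\bigr)\dx\sigma(y)$. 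Since $\Gamma^k(x,y)=G(x-y)$ with $G(z)=-\tfrac{i}{4}H_0^{(1)}(k|z|)$, the kernel of $\K_{D_d}^{k,*}$ after this change of variables is $\nabla G\bigl(x-y+\epsilon(\nu_x-\nu_y)\bigr)\cdot\nu_x$, and therefore
\[
\K_{D_d}^{k,*}[\widetilde\phi](\widetilde x)=\int_{\p D}\nabla G\bigl(x-y+\epsilon(\nu_x-\nu_y)\bigr)\cdot\nu_x\;\phi(y)\,\bigl(1+\epsilon\tau(y)\bigr)\dx\sigma(y).
\]

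Next I would expand this to order $\epsilon$. Formally, $\nabla G\bigl(x-y+\epsilon(\nu_x-\nu_y)\bigr)\cdot\nu_x=\nabla G(x-y)\cdot\nu_x+\epsilon\,\nu_x^{\mathsf{T}}D^2G(x-y)(\nu_x-\nu_y)+o(\epsilon)$, so, together with the Jacobian factor, the $O(\epsilon)$ part of the integral is $\int_{\p D}\bigl[\tau(y)\,\p_{\nu_x}\Gamma^k(x,y)+\nu_x^{\mathsf{T}}D^2G(x-y)(\nu_x-\nu_y)\bigr]\phi(y)\dx\sigma(y)$. Its three pieces are identified as follows. The Jacobian term is $\int_{\p D}\tau(y)\p_{\nu_x}\Gamma^k(x,y)\phi(y)\dx\sigma(y)=\K_D^{k,*}[\tau\phi](x)$. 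The $\nu_x\otimes\nu_y$ part of the Hessian term, $-\int_{\p D}\nu_x^{\mathsf{T}}D^2G(x-y)\nu_y\,\phi(y)\dx\sigma(y)$, is exactly $\frac{\p\D_D^k}{\p\nu}[\phi](x)$, the normal derivative of the double-layer potential. The $\nu_x\otimes\nu_x$ part equals the ``straight'' second normal derivative $\nu_x^{\mathsf{T}}D^2_x\S_D^k[\phi](x)\,\nu_x$; extending $\nu$ as the gradient of the signed distance (so that $\nu\cdot\nabla\nu=0$ and $\nu^{\mathsf{T}}D^2u\,\nu=\p_\nu^2u$ on $\p D$) and using $(\Delta+k^2)\S_D^k[\phi]=0$ on each side of $\p D$ together with the boundary decomposition $\Delta=\p_\nu^2+\tau\p_\nu+\p_T^2$, this becomes $-k^2\S_D^k[\phi]-\tau\,\K_D^{k,*}[\phi]-\p_T^2\S_D^k[\phi]$. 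Summing the three pieces yields precisely the stated $\K_1^k$.

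The main obstacle is to justify this termwise expansion near the diagonal: $D^2G$ is a Calder\'on--Zygmund kernel (behaving like $|z|^{-2}$ in two dimensions), so one cannot simply differentiate under the integral sign. Only the full combination $\nu_x^{\mathsf{T}}D^2G(x-y)(\nu_x-\nu_y)+\tau(y)\p_{\nu_x}\Gamma^k(x,y)$ is the kernel of a bounded operator on $L^2(\p D)$ — this uses that $\nu_x-\nu_y=O(|x-y|)$ and $(x-y)\cdot\nu_x=O(|x-y|^2)$ along $\p D$ — whereas the two sub-pieces $\frac{\p\D_D^k}{\p\nu}$ and $\nu^{\mathsf{T}}D^2\S_D^k\,\nu$ are hypersingular individually and make sense only as finite parts, their leading singularities (and the $\pm\tfrac12$ jump terms) cancelling between them. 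Carrying this out rigorously is exactly the analysis of \cite{thinlayer}: one isolates the singular part $\tfrac{1}{2\pi}\ln|z|$ of $G$ (the remaining $O(|z|^2\ln|z|)$ tail being differentiated termwise), reduces the logarithmic kernel to the flat half-space model, tracks the curvature corrections and the transformation of the principal value under $p$ — which is where the $\p_T^2$ and $\tau$ terms are produced — and checks that the remainder is $o(\epsilon)$ in $L^2(\p D)$. Everything else is routine bookkeeping of constants.
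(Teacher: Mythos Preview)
Your proposal is correct and follows exactly the approach of \cite{thinlayer}, which is all the paper does here: Proposition~\ref{prop:asympK} is stated without proof, with the remark that ``the following results are given in \cite{thinlayer}, but adjusted to the case where $\epsilon<0$.'' Your pull-back of the kernel through $p$, the identification of the Jacobian term as $\K_D^{k,*}[\tau\phi]$, the $-\nu_x\otimes\nu_y$ Hessian piece as $\p\D_D^k/\p\nu$, and the $\nu_x\otimes\nu_x$ piece via the curvilinear Laplacian $\Delta=\p_\nu^2+\tau\p_\nu+\p_T^2$ (the same identity the paper uses later in \eqref{eq:noncirc3} and the surrounding computation) is precisely the derivation in \cite{thinlayer}, and your discussion of why the two hypersingular pieces must be treated jointly --- with the $\pm\tfrac12$ jumps cancelling --- is the right caveat.
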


We also state the following result which is given, for example, in \cite{McLean}.
\begin{prop} \label{prop:hypersingular}
	For $x\in \partial D$ and $k\geq 0$ we have
	\begin{align*}
	\frac{\partial \D_D^k}{\partial \nu}[\phi](x) &= \left(\frac{1}{2}I + \K_D^{k,*}\right)\left(\S_D^k\right)^{-1}\left(-\frac{1}{2} + \K_D^k\right)[\phi](x) \\ 
	&= \left(-\frac{1}{2}I + \K_D^{k,*}\right)\left(\S_D^k\right)^{-1}\left(\frac{1}{2} + \K_D^k\right)[\phi](x).
	\end{align*}
\end{prop}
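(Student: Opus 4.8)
The plan is to read off both identities from the two classical uniqueness theorems for the Helmholtz equation --- the interior Dirichlet problem on $D$ and the exterior Dirichlet problem on $\R^2\setminus\overline{D}$ with the Sommerfeld radiation condition --- together with the jump relations of Section~\ref{sec:layerpot}, the companion jump relation $\D_D^k[\phi]\big|_{\pm} = \big(\mp\tfrac12 I + \K_D^k\big)[\phi]$ for the double layer potential, and the classical continuity of the normal derivative of $\D_D^k[\phi]$ across $\partial D$ (all available from \cite{McLean}). Throughout I use that $k^2$ lies below the first Dirichlet eigenvalue of $-\Delta$ on $D$ --- automatic for $k=0$ and in the subwavelength regime, and otherwise handled by analytic continuation in $k$ --- and that $\S_D^k$ is boundedly invertible (for $k=0$ in two dimensions in the sense appropriate to Section~\ref{sec:layerpot}).

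To obtain the second identity, set $u = \D_D^k[\phi]$. Then $u$ solves the Helmholtz equation in $D$ and, by the double layer jump relation, $u\big|_- = \big(\tfrac12 I + \K_D^k\big)[\phi]$ on $\partial D$. Put $\psi := (\S_D^k)^{-1}\big(\tfrac12 I + \K_D^k\big)[\phi]$; then $\S_D^k[\psi]$ also solves the Helmholtz equation in $D$ and, being continuous across $\partial D$, has the same Dirichlet trace as $u$, so by uniqueness $u = \S_D^k[\psi]$ in $D$. Differentiating in the normal direction from inside and using the jump relation for $\partial_\nu \S_D^k$ gives
\[
\frac{\partial \D_D^k}{\partial \nu}[\phi]\Big|_- = \Big(-\tfrac12 I + \K_D^{k,*}\Big)[\psi] = \Big(-\tfrac12 I + \K_D^{k,*}\Big)\big(\S_D^k\big)^{-1}\Big(\tfrac12 I + \K_D^k\Big)[\phi],
\]
and since $\partial_\nu \D_D^k[\phi]$ is continuous across $\partial D$ the left-hand side equals $\frac{\partial \D_D^k}{\partial\nu}[\phi]$, which is the second identity.

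The first identity follows from the same construction in the exterior domain. Here $u = \D_D^k[\phi]$ solves the Helmholtz equation in $\R^2\setminus\overline{D}$, satisfies the radiation condition, and has exterior trace $u\big|_+ = \big(-\tfrac12 I + \K_D^k\big)[\phi]$. With $\psi' := (\S_D^k)^{-1}\big(-\tfrac12 I + \K_D^k\big)[\phi]$, the function $\S_D^k[\psi']$ solves the Helmholtz equation in the exterior, satisfies the radiation condition, and has the same trace on $\partial D$; by uniqueness of the exterior Dirichlet problem $u = \S_D^k[\psi']$ in $\R^2\setminus\overline{D}$, and differentiating from outside together with the continuity of the hypersingular operator gives $\frac{\partial \D_D^k}{\partial\nu}[\phi] = \big(\tfrac12 I + \K_D^{k,*}\big)(\S_D^k)^{-1}\big(-\tfrac12 I + \K_D^k\big)[\phi]$. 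Equating the two representations also reproduces the Calderón-type relation between $\S_D^k$, $\K_D^k$, $\K_D^{k,*}$ and the hypersingular operator, which serves as a consistency check.

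The point that needs genuine care is the functional-analytic setting: one must work in spaces (for instance $H^{1/2}(\partial D)\to H^{-1/2}(\partial D)$, using the regularity $\partial D\in C^1$ assumed in this appendix) in which $\partial_\nu\D_D^k$ is a bounded operator, the jump relations hold, and the continuity of $\partial_\nu\D_D^k$ across $\partial D$ is legitimate; and one must justify the two uniqueness steps --- the interior one via the eigenvalue condition on $k^2$, and the exterior one in the borderline case $k=0$ by correctly fixing the logarithmic behaviour at infinity, which is precisely the reason the modified single layer potential is used there. Once this framework is in place the remainder is routine bookkeeping with the jump relations rather than anything substantive.
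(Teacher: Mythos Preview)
Your argument is correct and is precisely the standard textbook derivation of the Calder\'on-type factorization of the hypersingular operator. Note, however, that the paper does not actually prove Proposition~\ref{prop:hypersingular}: it merely states the result and refers to \cite{McLean}, so there is no ``paper's own proof'' to compare against. What you have written is essentially the argument one finds in that reference (interior/exterior Dirichlet uniqueness together with the jump relations and continuity of $\partial_\nu\D_D^k$ across $\partial D$), so your proposal fills in exactly the details the paper chose to outsource. Your remarks about the functional-analytic setting and the borderline case $k=0$ in two dimensions are apt caveats but do not affect the use made of the identity in Appendix~\ref{app:noncirc}, where only subwavelength $k$ and smooth boundaries are in play.
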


As in Section \ref{sec-3}, we consider the defect problem \eqnref{eq:scattering}, modelled by the fictitious sources as in equation \eqnref{eq:scattering_fictitious}. Observe that Proposition \ref{prop:floquet} is valid even for the case of non-circular bubbles. To derive the analogue of Proposition \ref{prop:effective}, we again study equations \eqnref{eq:SDdSD_inside} and \eqnref{eq:SDdSD_outside}, \ie{},   
\begin{align*}
\mathcal{S}_{D_d}^{k_b}[\varphi_d] &\equiv \mathcal{S}_D^{k_b}[\varphi] \quad\mbox{in }D_d, \\
\mathcal{S}_{D_d}^{k_w}[\psi_d] &\equiv \mathcal{S}_D^{k_w}[\psi] \quad\mbox{in }Y^2\setminus \overline{D}.
\end{align*}
Since $\omega$ is in the subwavelength regime, $k_b$ is not a Dirichlet eigenvalue. Together with the uniqueness of the exterior Dirichlet problem, we conclude that it is sufficient to consider these equations on the boundaries. Using the notation from above, this means
\begin{align*}
\mathcal{S}_{D_d}^{k_b}[\varphi_d] &= \mathcal{S}_{D_d,D}^{k_b}[\varphi] ,\\
\mathcal{S}_{D,D_d}^{k_w}[\psi_d] &= \mathcal{S}_D^{k_w}[\psi].
\end{align*}
Using the expansions \eqnref{eq:asympSdD}, \eqnref{eq:asympSd} and \eqnref{eq:asympSDd}, we find
\begin{align*}
\S_D^{k_b}[q^{-1}\varphi_d] + \epsilon \left(\K_D^{k_b} + \K_D^{k_b,*}\right)[q^{-1}\varphi_d] + \epsilon\S_D^{k_b}[\tau q^{-1}\varphi_d] &=\S_D^{k_b}[\varphi] +\epsilon \left(-\frac{1}{2}I + \K_D^{k_b,*}\right)[\varphi] + o(\epsilon)\\  
\S_D^{k_w}[q^{-1}\psi_d](x) + \epsilon \left(-\frac{1}{2}I+ \K_D^{k_w} \right)[q^{-1}\psi_d](x) + \epsilon\S_D^{k_w}[\tau q^{-1}\psi_d](x) &= \mathcal{S}_D^{k_w}[\psi] + o(\epsilon),
\end{align*}
with $q$ defined as above. From this we find that
\begin{align} \label{eq:noncirc1}
\begin{pmatrix}
\varphi_d \\
\psi_d
\end{pmatrix} =& Q\left(I + \epsilon\begin{pmatrix}
- \left(\S_D^{k_b}\right)^{-1}\left(\frac{1}{2}I + \K_D^{k_b} \right) - \tau & 0 \\ 0 & - \left(\S_D^{k_w}\right)^{-1}\left(-\frac{1}{2}I + \K_D^{k_w} \right) - \tau
\end{pmatrix}\right)\begin{pmatrix}
\varphi \\
\psi
\end{pmatrix} + o(\epsilon) \nonumber \\ 
:=& \mathcal{P}_1\begin{pmatrix}
\varphi \\
\psi
\end{pmatrix},
\end{align}
where $Q$ is the bijection $Q: \left(L^2(\partial D)\right)^2 \rightarrow \left(L^2(\partial D_d)\right)^2, Q = (q, q)$ and $\mathcal{P}_1: \left(L^2(\partial D)\right)^2 \rightarrow \left(L^2(\partial D_d)\right)^2$.

Using the asymptotic expansions \eqnref{eq:asympSd} and \eqnref{eq:asympK}, we can expand the operator $\A_{D_d}$ as 
\begin{equation} \label{eq:noncirc2}
\A_{D_d} = Q \circ \left(\A_{D}(\omega,\delta) + \epsilon \A_1(\omega,\delta)\right)\circ Q^{-1}  + o(\epsilon),
\end{equation}
where
\begin{equation*} \label{eq:A1}
\A_1(\omega,\delta) = 
\begin{pmatrix}
\K_D^{k_b} + \K_D^{k_b,*} + \S_D^{k_b}[\tau\cdot] & -\left(\K_D^{k_w} + \K_D^{k_w,*} + \S_D^{k_w}[\tau\cdot]\right)\\
\K_1^{k_b} & -\delta \K_1^{k_w}
\end{pmatrix}.
\end{equation*}

Using Taylor expansion, we have that 
$$\frac{\partial}{\partial \nu}  H |_{\partial D} = \frac{\partial}{\partial \nu}  H |_{\partial D_d} - \epsilon \frac{\partial^2}{\partial \nu^2}H |_{\partial D_d} .$$
We use the Laplacian in the curvilinear coordinates defined by $T_{\widetilde x},\nu_{\widetilde x}$ for $\widetilde x\in \partial D_d$,
\begin{equation*}\label{eq:lapcurve}
\Delta = \frac{\partial^2}{\partial \nu^2} + \tau(\widetilde x)\frac{\partial}{\partial \nu} + \frac{\partial^2}{\partial T^2}.
\end{equation*}
It is easily verified that the curvatures on the two boundaries satisfy $$\tau(\widetilde x) = \tau(x) +O(\epsilon).$$
Hence we obtain
$$ \frac{\partial^2}{\partial \nu^2}H |_{\partial D_d} = -\left(k_w^2 +\frac{\partial^2}{\partial T^2}\right) H |_{\partial D_d} - \tau \frac{\partial}{\partial \nu}  H |_{\partial D_d} + O(\epsilon).$$
In total, we have
\begin{equation} \label{eq:noncirc3}
\begin{pmatrix}
H|_{\partial D}
\\[0.3em]
\ds\partial H /\partial \nu |_{\partial D}
\end{pmatrix}
=  \mathcal{P}_2^{-1}
\begin{pmatrix}
H|_{\partial D_d}
\\[0.3em]
\ds\partial H /\partial \nu |_{\partial D_d}
\end{pmatrix},
\end{equation}
where the operator $\mathcal{P}_2^{-1}: L^2(\p D_d)^2\rightarrow L^2(\p D)^2 $ is given by
$$
\mathcal{P}_2^{-1} = \left( I + \epsilon\begin{pmatrix}
\ds 0
& -1
\\
k_w^2 +\left(\partial_T\right)^2& \tau
\end{pmatrix}\right)Q^{-1} + o(\epsilon).
$$

Combining equations \eqnref{eq:AD}, \eqnref{eq:ADd_original}, \eqnref{eq:noncirc1} and \eqnref{eq:noncirc3} we arrive at
\begin{equation*}
\left(\mathcal{P}_2^{-1}\A_{D_d}\mathcal{P}_1 - \A_{D}\right)\begin{pmatrix}\varphi \\ \psi \end{pmatrix} = \begin{pmatrix}f \\ g \end{pmatrix}.
\end{equation*}
As before, we define $\mathcal{A}_D^\epsilon = \mathcal{P}_2^{-1}\A_{D_d}\mathcal{P}_1$. Finally, we can compute this explicitly using equations \eqnref{eq:noncirc1}, \eqnref{eq:noncirc2} and \eqnref{eq:noncirc3} and Proposition \ref{prop:hypersingular} to obtain the following proposition, which is the analogue of Proposition \ref{prop:effective} in the case of non-circular bubbles. 
\begin{prop} \label{prop:effective_noncirc}
	The density pair $(\varphi,\psi)$ and the effective sources $(f,g)$ satisfy the following relation 
	\begin{equation*}
	\left(\mathcal{A}_D^\epsilon - \A_{D}\right)\begin{pmatrix}
	\varphi
	\\[0.3em]
	\psi
	\end{pmatrix} = 
	\begin{pmatrix}
	f
	\\[0.3em]
	g
	\end{pmatrix},
	\end{equation*}
	where the operator $\mathcal{A}_D^\epsilon$ satisfies
	$$ \mathcal{A}_D^\epsilon - \A_{D} = \epsilon(\delta-1)\begin{pmatrix}
	0 & \frac{1}{2}I + \K_D^{\omega,*}
	\\[0.3em]
	0 & \omega^2 \S_D^\omega + \frac{\partial^2}{\partial T^2}\S_D^\omega
	\end{pmatrix} + o(\epsilon) .$$
\end{prop}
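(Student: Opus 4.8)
The plan is to read off the first-order-in-$\epsilon$ expansion of $\mathcal{A}_D^\epsilon = \mathcal{P}_2^{-1}\A_{D_d}\mathcal{P}_1$ directly from the three expansions already in hand: equation \eqnref{eq:noncirc1}, which gives $\mathcal{P}_1 = Q(I+\epsilon M_1)+o(\epsilon)$ with $M_1$ the block-diagonal operator displayed there; equation \eqnref{eq:noncirc2}, which gives $\A_{D_d} = Q(\A_D+\epsilon\A_1)Q^{-1}+o(\epsilon)$; and equation \eqnref{eq:noncirc3}, which gives $\mathcal{P}_2^{-1} = (I+\epsilon M_2)Q^{-1}+o(\epsilon)$ with $M_2 = \left(\begin{smallmatrix} 0 & -1 \\ k_w^2 + (\p_T)^2 & \tau \end{smallmatrix}\right)$. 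Since the bijections $Q$ and $Q^{-1}$ cancel telescopically,
\[
\mathcal{A}_D^\epsilon = (I+\epsilon M_2)(\A_D+\epsilon\A_1)(I+\epsilon M_1)+o(\epsilon) = \A_D + \epsilon(M_2\A_D + \A_1 + \A_D M_1) + o(\epsilon),
\]
where boundedness of $\A_D,\A_1,M_1,M_2$ on the relevant function spaces (from \cite{thinlayer}) is used to absorb the higher-order terms into $o(\epsilon)$. Recalling that $(\mathcal{A}_D^\epsilon-\A_D)\left(\begin{smallmatrix}\varphi\\\psi\end{smallmatrix}\right)=\left(\begin{smallmatrix}f\\g\end{smallmatrix}\right)$ has already been established, the proposition reduces to evaluating the operator matrix $M_2\A_D + \A_1 + \A_D M_1$; by Assumption \ref{assumption1} we set $k_b=k_w=\omega$ throughout.

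For this computation I would first use the jump relations $\p\S_D^\omega/\p\nu|_{\mp} = \mp\tfrac12 I + \K_D^{\omega,*}$ to write $\A_D$ purely in terms of $\S_D^\omega$ and $\K_D^{\omega,*}$. Multiplying $\A_D$ on the right by the block-diagonal $M_1$ produces in the two lower entries the compositions $(-\tfrac12 I + \K_D^{\omega,*})(\S_D^\omega)^{-1}(\tfrac12 I + \K_D^\omega)$ and $(\tfrac12 I + \K_D^{\omega,*})(\S_D^\omega)^{-1}(-\tfrac12 I + \K_D^\omega)$, which by Proposition \ref{prop:hypersingular} both equal the hypersingular operator $\p\D_D^\omega/\p\nu$. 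Substituting the explicit expression for $\K_1^\omega$ from Proposition \ref{prop:asympK} into $\A_1$, one then checks a chain of cancellations: the two entries of the left column vanish identically; the upper-right entry collapses to $(\delta-1)(\tfrac12 I + \K_D^{\omega,*})$; and the lower-right entry collapses to $(\delta-1)(\omega^2\S_D^\omega + \frac{\p^2}{\p T^2}\S_D^\omega)$ once the $\p\D_D^\omega/\p\nu$ terms cancel and the curvature-weighted terms cancel in two groups. Multiplying through by $\epsilon$ then yields the stated formula for $\mathcal{A}_D^\epsilon-\A_D$.

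The main obstacle is purely bookkeeping with operator ordering. The signed curvature $\tau$ enters in two guises --- as a multiplication applied \emph{before} a boundary operator (as in $\S_D^\omega[\tau\,\cdot\,]$ and $\K_D^{\omega,*}[\tau\,\cdot\,]$, originating from $M_1$ and from the $\A_1$/$\K_1^\omega$ terms) and as a multiplication applied \emph{after} one (as in the $\tau\,\p/\p\nu$ contributions coming from $M_2$) --- and the cancellations in the lower-right block rely on correctly pairing $\tau\K_D^{\omega,*}$ against $\K_D^{\omega,*}[\tau\,\cdot\,]$, and the $\tfrac12$-multiples of $\tau$ against one another. A secondary technical point is that $\frac{\p^2}{\p T^2}\S_D^\omega$ is only order zero rather than order $-1$, so the $o(\epsilon)$ remainders should be understood in the operator topology appropriate to the mapping properties recorded in Propositions \ref{prop:asympsingle}--\ref{prop:asympK}; these, together with the invertibility of $\S_D^\omega$ and of $-\tfrac12 I + \K_D^{\omega,*}$ in the subwavelength regime, justify all the expansions used.
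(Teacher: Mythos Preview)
Your proposal is correct and follows exactly the approach the paper indicates: the paper's proof of this proposition is the sentence ``we can compute this explicitly using equations \eqnref{eq:noncirc1}, \eqnref{eq:noncirc2} and \eqnref{eq:noncirc3} and Proposition \ref{prop:hypersingular},'' and you have supplied precisely that computation, correctly identifying the first-order term as $M_2\A_D + \A_1 + \A_D M_1$ and tracking the cancellations (including the two hypersingular identities from Proposition \ref{prop:hypersingular}) that reduce it to the stated $2\times 2$ block form.
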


\bibliography{line}{}
\bibliographystyle{abbrv}

\end{document}